\documentclass[10pt]{article}

\usepackage{graphicx, caption}
\usepackage{amssymb}
\usepackage{amsthm}
\usepackage{amsmath}
\usepackage{enumitem}
\usepackage{framed} 
\usepackage{dsfont}

\usepackage{soul}
\usepackage{changepage}
\usepackage{comment}
\usepackage{float}
\usepackage[dvipsnames, table]{xcolor}
\definecolor{darkblue}{rgb}{0,0,0.5}
\usepackage[
    colorlinks=true,
    linkcolor=darkblue,
    citecolor=darkblue]{hyperref}
\usepackage{cleveref}
\usepackage{lscape}

\usepackage{tikz}
\usetikzlibrary{calc}
\usetikzlibrary{shapes}
\usetikzlibrary{decorations.pathreplacing,calligraphy}

\usepackage{mathtools}

\theoremstyle{plain}
\newtheorem{theorem}{Theorem}[section]
\newtheorem{lemma}[theorem]{Lemma}

\newtheorem{proposition}[theorem]{Proposition}
\newtheorem{claim}{Claim}[theorem]
\newtheorem{conjecture}[theorem]{Conjecture}
\newtheorem{observation}[theorem]{Observation}
\newtheorem{corollary}[theorem]{Corollary}
\theoremstyle{definition}
\newtheorem{definition}[theorem]{Definition}

\newtheorem*{remark}{Remark}

\usepackage{mdframed}
\usepackage{lipsum}

\newmdtheoremenv{definbox}[theorem]{Definition}

\newcommand\eps{\varepsilon}

\newcommand{\cG}{\ensuremath{\mathcal G}}

\newcommand{\Prob}[1]{\ensuremath{%
    \mathbb P\left[#1\right]
  }}
  \newcommand{\ProbCond}[2]{\ensuremath{%
    \mathbb P\left[#1\:\middle|\:#2\right]
  }}
\newcommand{\Expect}[1]{\ensuremath{%
    \mathbb E\left[#1\right]
  }}
  \newcommand{\aas}{a.a.s.\ }

\newcommand{\hamconnthresh}{\ensuremath{%
    \delta^{\mathrm{CON}}
}}
\newcommand{\rdthresh}{\ensuremath{%
    \delta^{\mathrm{RD}}
}}
\newcommand{\vsdthresh}{\ensuremath{%
    \delta^{\mathrm{VSD}}
}}

\newcommand{\cF}{\mathcal{F}}

\newcommand{\cH}{\mathcal{H}}

\newcommand{\doublesquig}{%
  \mathrel{%
    \vcenter{\offinterlineskip
      \ialign{##\cr$\rightsquigarrow$\cr\noalign{\kern-1.5pt}$\rightsquigarrow$\cr}%
    }%
  }%
}

\DeclareMathOperator{\gnp}{\cG}
\DeclareMathOperator{\gknp}{\cG^{(\emph{k})}}

\DeclareMathOperator{\maxdeg}{\Delta}

\usepackage{graphicx}%

\addtolength{\textwidth}{2in}
\addtolength{\hoffset}{-1in}
\addtolength{\textheight}{1in}
\addtolength{\voffset}{-0.7in}
\setlength{\parskip}{6pt}
\setlength{\parindent}{0pt}
\linespread{1.03}

\newcommand{\AM}[1]{\textcolor{purple}{ $\blacktriangleright$\ {\sf AM: #1}\
  $\blacktriangleleft$ }}
  
  \newcommand{\TK}[1]{\textcolor{ForestGreen}{ $\blacktriangleright$\ {\sf TK: #1}\
  $\blacktriangleleft$ }}


\AtBeginDocument{%
   \def\MR#1{}
} 

\title{Optimal spread for spanning subgraphs of Dirac  hypergraphs}
\author{
Tom Kelly\thanks{School of Mathematics, Georgia Institute of Technology, USA. Email: tom.kelly@gatech.edu.  Research supported by
the National Science Foundation under Grant No. DMS-224707} \and 
Alp Müyesser\thanks{Department of Mathematics, University College London, UK. Email: alp.muyesser.21@ucl.ac.uk} \and 
Alexey Pokrovskiy\thanks{Department of Mathematics, University College London, UK. Email: dralexeypokrovskiy@gmail.com}}

\begin{document}
\maketitle

\begin{abstract}

Let $G$ and $H$ be hypergraphs on $n$ vertices, and suppose $H$ has large enough minimum degree to necessarily contain a copy of $G$ as a subgraph.
We give a general method to randomly embed $G$ into $H$ with good ``spread''.
More precisely, for a wide class of $G$, we find a randomised embedding $f\colon G\hookrightarrow H$ with the following property: for every $s$, for any partial embedding $f'$ of $s$ vertices of $G$ into $H$, the probability that $f$ extends $f'$ is at most $O(1/n)^s$.
This is a common generalisation of several streams of research surrounding the classical Dirac-type problem. 
\par For example, setting $s=n$, we obtain an asymptotically tight lower bound on the number of embeddings of $G$ into $H$. This recovers and extends recent results of Glock, Gould, Joos, K\"uhn, and Osthus and of Montgomery and Pavez-Sign\'e regarding enumerating Hamilton cycles in Dirac hypergraphs. Moreover, using the recent developments surrounding the Kahn--Kalai conjecture, this result implies that many Dirac-type results hold robustly, meaning $G$ still embeds into $H$ after a random sparsification of its edge set. This allows us to recover a recent result of Kang, Kelly, K\"uhn, Osthus, and Pfenninger and of Pham, Sah, Sawhney, and Simkin for perfect matchings, and obtain novel results for Hamilton cycles and factors in Dirac hypergraphs.
\par Notably, our randomised embedding algorithm is self-contained and does not require Szemer\'edi's regularity lemma or iterative absorption.  
\end{abstract}

\section{Introduction}
A central theme in extremal graph theory is computing the minimum-degree thresholds for various properties. A prototypical result is Dirac's theorem~\cite{dirac1952}, which states that $n$-vertex graphs with minimum degree at least $n/2$ are Hamiltonian. This is of course tight, as graphs with minimum degree less than $n/2$ need not be connected. Results of this flavour, concerned with minimum-degree thresholds for containment of spanning structures, are often referred to as \textit{Dirac-type} theorems.

\par Although the assumption that $\delta(G)\geq n/2$ cannot be weakened in Dirac's theorem, we can say a lot more about $n$-vertex graphs with $\delta(G)\geq n/2$ (such graphs are henceforth referred to as Dirac graphs). S\'arközy, Selkow, and Szemer\'edi \cite{SSS2003} showed that Dirac graphs in fact contain $\Omega(n)^n$ many distinct Hamilton cycles, which is clearly optimal up to the value of the implied constant factor (the constant terms were later sharpened by Cuckler and Kahn \cite{CK2009}). We refer to results of this type as \textit{enumeration} results.

\par We also know that the Hamilton cycles in Dirac graphs can survive \textit{random edge deletions} (this phenomenon is often called \textit{robustness}, see the survey of Sudakov \cite{sudakov2017robustness}). Indeed, an influential result of Krivelevich, Lee, and Sudakov \cite{krivelevich2014robust} states that for any Dirac graph $G$, the random subgraph obtained by keeping each edge of $G$ independently with probability $p$ where $p\geq C\log n / n$ ($C$ is an absolute constant) is \textit{asymptotically almost surely} (abbreviated a.a.s.) Hamiltonian. We remark that this result is a common generalisation of Dirac's theorem, as well as the classical result of P\'osa~\cite{Po76} that the Erd\H{o}s-R\'enyi random graph $\gnp(n, C\log n / n)$ is \aas Hamiltonian. 

\par Recently, random graph theory was revolutionised by Frankston, Kahn, Narayanan, and Park's \cite{FKNP21} proof of the \textit{fractional expectation threshold vs.\ threshold} conjecture of Talagrand~\cite{Ta10} and Park and Pham's \cite{park2022proof}  proof of the even stronger Kahn--Kalai conjecture~\cite{KK07}.
A crucial corollary of Talagrand's conjecture relates the so-called \textit{spread measures} with thresholds. Roughly speaking, for the study of random graphs, this connection implies that if one can find a probability measure on copies of $G\subseteq K_n$ with good \textit{spread} (see Definition~\ref{defn:spread}), then $G$ \aas appears in $\gnp(n,p)$. 
Simply considering the uniform distribution on copies of $G\subseteq K_n$ already gives startlingly powerful results. An example that barely scratches the surface of this phenomenon is the following: If $G$ is a cycle on $n$ vertices, then the uniform distribution on copies of $G$ in $K_n$ (that is, on Hamilton cycles of $K_n$) 
turns out to be well-behaved enough that through Talagrand's conjecture, we can swiftly recover P\'osa's~\cite{Po76} result about Hamiltonicity of $\gnp(n, C\log n / n)$.


Two recent independent papers by Kang, Kelly, K\"uhn, Osthus, and Pfenninger (KKKOP)~\cite{KKKOP22} and Pham, Sah, Sawhney, and Simkin (PSSS)~\cite{PSSS22} demonstrated that \textit{spreadness results} can yield common generalisations of enumeration and robustness results.  
If there is a probability measure on copies of $G$ in $H$ with good spread, where $H$ is for example a hypergraph with minimum degree large enough to contain a copy of $G$, then Talagrand's conjecture gives for free that $H$ contains $G$ \textit{robustly}, in a similar sense to the aforementioned result of Krivelevich, Lee, and Sudakov (see the next subsection for precise formulations).  Moreover, in order for such a probability measure to have good spread, it must have large support; that is, there must be many copies of $G$ in $H$.

Calculating the spread of copies of $G\subseteq K_n$ sampled uniformly at random is in many cases quite approachable.
When $H$ is not a complete graph, copies of $G\subseteq H$ sampled uniformly at random are hard to reason about, and it is unclear how one can compute the spread of such a random subgraph.
Both KKKOP \cite{KKKOP22} and PSSS \cite{PSSS22} circumvent this difficulty by employing powerful methods from extremal/probabilistic combinatorics such as Szemer\'edi's regularity lemma and iterative absorption to construct a probability measure on the desired subgraphs with good spread. The contribution of the current paper is a simpler method that can achieve the same task, in many cases in significantly more generality. 

The notion of \textit{vertex spreadness}, introduced by PSSS~\cite{PSSS22}, is crucial.  Instead of considering a random copy of $G$ in $H$ and analyzing the probability that a subset of edges of $H$ appears in $G$, we consider a random \textit{embedding} of $G \hookrightarrow H$ and analyze the probability that a subset of vertices of $G$ are mapped to a subset of vertices of $H$.
Our main results provide a randomized algorithm for embedding $G \hookrightarrow H$ so that the vertex spreadness matches up to a constant factor the vertex spreadness of embedding $G \hookrightarrow K_n$ uniformly at random, when $G$ is a (hypergraph) cycle or the disjoint union of copies of some fixed hypergraph.
This allows us to recover and extend many results from \cite{KKKOP22, PSSS22}, as well as obtain novel enumeration results which extend recent results of Montgomery and Pavez-Sign\'e \cite{MPS23}. In the next subsections, we provide precise formulations of our results.

\subsection{Thresholds}
Before stating our results, we make precise some concepts already alluded to in the introduction.
For a $k$-uniform hypergraph $H$ and $d \in [k - 1]$, we let $\delta_d(H)$ be the minimum number of edges of $H$ that any set of $d$ vertices of $V(H)$ is contained in.

\begin{definition}[Minimum-degree/Dirac threshold]
\label{def:min_degree_threshold}
Let $\mathcal{F}$ be an infinite family of $k$-uniform hypergraphs. By $\delta_{\mathcal{F},d}$ we denote the smallest real number $\delta$ such that for all $\alpha>0$ and for all but finitely many $n$ the following holds. Let $n$ be such that there exists some $F\in \mathcal{F}$ with $|V(F)|=n$.
Let $H$ be an $n$-vertex $k$-uniform hypergraph with $\delta_d(H) \ge (\delta+\alpha)\binom{n-d}{k-d}$. Then, there exists some $F\in \mathcal{F}$ with $|V(F)|=n$ such that $H$ contains a copy of $F$.  
\end{definition}

Note that for all $\cF$ and $d$, the statement in Definition~\ref{def:min_degree_threshold} holds for $\delta = 1$ and for the infimum over all such $\delta\in[0,1]$, so $\delta_{\cF, d}$ is well-defined.



The notion of thresholds is central in the study of random graphs.  Here we present a general definition for the threshold for a binomial random $k$-uniform hypergraph to contain some spanning subgraph.

\begin{definition}
    Let $\mathcal{F}$ be an infinite family of $k$-uniform hypergraphs. Denote by $p=p^*_{\mathcal{F}}(n)$ the function so that for all $n\in \mathbb{N}$ (such that there exists some $F\in \mathcal{F}$ with $|V(F)|=n$), the binomial random hypergraph $\gknp(n,p)$ contains a copy of some $F\in \mathcal{F}$ with $|V(F)|=n$ with probability precisely $1/2$. 
\end{definition}

For example, if $\cF$ contains one length-$n$ cycle for every $n \in \mathbb N$, then $p^*_\cF(n)$ is the threshold for $\gnp(n, p)$ to have a Hamilton cycle.

\begin{definition}[$p$-random sparsification] Given $p\in[0,1]$ and some hypergraph $H$, by $H_p$, we denote the random graph obtained by keeping every edge of $H$ with probability $p$, making the decisions independently for each distinct edge of $H$.
\end{definition}

\begin{definition}[RD-threshold/Robust Dirac threshold] Let $\mathcal{F}$ be an infinite family of $k$-uniform hypergraphs, and let $p=p(n)\in[0,1]$ be a function. By $\rdthresh_{\cF, d, p}$, we denote the smallest real number $\delta$ such that for all $\alpha,\eps>0$, there exists $C>0$ such that for all sufficiently large $n\in \mathbb{N}$ (such that there exists some $F\in \mathcal{F}$ with $|V(F)|=n$), for any $n$-vertex hypergraph $H$ with $\delta_d(H)\geq (\delta+\alpha) \binom{n-d}{k-d}$, there exists some $F\in\mathcal{F}$ with $|V(F)|=n$ such that the random sparsification $H_{\hat{p}}$ contains a copy of $F$ with probability at least $1-\eps$ as long as ${\hat{p}}\geq C p(n)$. By $\rdthresh_{\mathcal{F},d}$, we denote $\rdthresh_{\mathcal{F},d, p^*_\mathcal{F}}$.
\end{definition}

It is clear that $\rdthresh_{\cF, d}\geq \delta_{\cF, d}$. The other inequality is a lot more interesting, and in the following cases, true.

\begin{theorem}\label{Prop_prev_results}
For the following values of $\cF$ and $d$, we have that $\rdthresh_{\cF, d}= \delta_{\cF, d}$:
\end{theorem}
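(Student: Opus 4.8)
My plan is to split the claimed equality into its two inequalities, dispatch the easy one directly, and prove the substantive one by combining our main embedding results with the theorem of Frankston, Kahn, Narayanan, and Park~\cite{FKNP21}. The inequality $\rdthresh_{\cF,d}\ge\delta_{\cF,d}$ needs almost nothing: if $\delta>\rdthresh_{\cF,d}$, then applying the definition of $\rdthresh$ with $\eps = 1/2$, for every $\alpha>0$, all large $n$, every $n$-vertex $H$ with $\delta_d(H)\ge(\delta+\alpha)\binom{n-d}{k-d}$, and every relevant $F\in\cF$ with $|V(F)|=n$, the sparsification $H_p$ contains a copy of $F$ with positive probability for a suitable $p$; since $H_p\In H$ deterministically, this forces $F\In H$, so $\delta$ already witnesses the property in Definition~\ref{def:min_degree_threshold}, and letting $\delta\downarrow\rdthresh_{\cF,d}$ gives $\rdthresh_{\cF,d}\ge\delta_{\cF,d}$. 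The real content is the reverse inequality $\rdthresh_{\cF,d}\le\delta_{\cF,d}$ for each pair $(\cF,d)$ in the list.

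To prove that, I would fix such a pair, take $\delta>\delta_{\cF,d}$ and $\alpha,\eps>0$, take an $n$-vertex $k$-uniform $H$ with $\delta_d(H)\ge(\delta+\alpha)\binom{n-d}{k-d}$, and take $F\in\cF$ with $|V(F)|=n$. First I would invoke our main embedding theorem: for $F$ a (hyper)cycle of the relevant uniformity and tightness, or a disjoint union of copies of a fixed hypergraph, and for $H$ satisfying exactly the minimum-degree hypothesis under which the corresponding classical Dirac-type theorem already guarantees $F\In H$ --- which for each listed family is precisely $\delta_d(H)\ge(\delta_{\cF,d}+\alpha)\binom{n-d}{k-d}$, by the theorem that pins down $\delta_{\cF,d}$ (for instance the R\"odl--Ruci\'nski--Szemer\'edi threshold for perfect matchings and tight Hamilton cycles) --- this produces a probability distribution on embeddings $\phi\colon F\hookrightarrow H$ that is $O(1/n)$-vertex-spread. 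Next I would feed this into the now-standard spread-to-threshold pipeline of \cite{KKKOP22,PSSS22}: pushing the distribution forward gives a distribution on copies $\phi(F)\In H$ that is $q$-spread on edge-subsets of $H$ and supported on sets of at most $|E(F)|=O(n)$ edges, with $q$ equal, up to a constant factor, to the edge-spread of a uniformly random copy of $F$ in the complete host --- a quantity that is straightforward to compute for cycles and for factors. Finally I would apply the theorem of Frankston, Kahn, Narayanan, and Park~\cite{FKNP21} (or the Kahn--Kalai theorem of Park and Pham~\cite{park2022proof}): for an absolute constant $K$, if $p\ge Kq\log|E(F)|$ then $H_p$ contains a copy of $F$ with probability at least $1-\eps$. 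For the families in the list, $Kq\log|E(F)| = O(p^*_\cF(n))$ --- using the known order of magnitude of the threshold $p^*_\cF(n)$ in $\gknp(n,p)$ (Johansson--Kahn--Vu and Kahn for matchings and factors, Dudek--Frieze and later work for Hamilton cycles) --- so $H_p\supseteq F$ with probability at least $1-\eps$ whenever $p\ge C\,p^*_\cF(n)$ for some $C=C(\cF,d,\alpha,\eps)$. Letting $\delta\downarrow\delta_{\cF,d}$ completes the proof.

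The hard part is entirely the first step: producing the $O(1/n)$-vertex-spread embedding distribution for Dirac hypergraphs without Szemer\'edi's regularity lemma or iterative absorption is the technical core of the paper, carried out in the later sections, and everything else here is assembly. Within this particular deduction the only delicate point is the last line --- one has to verify, family by family, that the bound $q\log|E(F)|$ coming out of Frankston, Kahn, Narayanan, and Park genuinely matches the true threshold $p^*_\cF(n)$ up to a constant factor, i.e.\ that the pairs $(\cF,d)$ in the statement are exactly those for which an $O(1/n)$-vertex-spread distribution is good enough to reach the sharp hypergraph threshold rather than one off by a polylogarithmic factor.
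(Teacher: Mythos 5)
The statement you are proving is not actually proved in the paper at all: Theorem~\ref{Prop_prev_results} is a catalogue of prior results, attributed to Krivelevich--Lee--Sudakov, Allen et al., Pham--Sah--Sawhney--Simkin, and Kang--Kelly--K\"uhn--Osthus--Pfenninger, and the paper's own contribution is to give a new unified proof generalising items (1), (2) and (4) --- explicitly \emph{not} item (3). This is where your proposal has a genuine gap: your first step invokes ``our main embedding theorem'' for the relevant family, but the paper's embedding machinery (Theorems~\ref{thm:main-vertex-spread} and~\ref{thm:vtx-spread-for-factors}) covers only Hamilton $\ell$-cycles and $F$-factors. There is no vertex-spread embedding result for bounded-degree spanning trees anywhere in the paper, so item (3) cannot be reached by your pipeline; it genuinely requires the separate argument of \cite{PSSS22} (which goes through the blow-up-lemma-type machinery the present paper deliberately avoids).

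The second gap is the point you flag as ``the only delicate point'' and then wave through: for item (2), $K_r$-factors with $r\ge 3$, the direct spread-to-threshold pipeline does \emph{not} match $p^*_\cF$. Pushing a $(O(1)/n)$-vertex-spread embedding forward gives an edge-spread of order $n^{-1/d_1(K_r)}=n^{-(r-2)/\binom r2 \cdot \frac{r}{2(r-1)}}$ (more precisely $q=\Theta(n^{-1/d_1(K_r)})$), and FKNP then requires $p\gtrsim q\log n$, whereas the true threshold of Johansson--Kahn--Vu is $\Theta\bigl(n^{-1/d_1(K_r)}\log^{1/\binom{r}{2}}n\bigr)$; the naive bound is off by a polylogarithmic factor, so the conclusion $\rdthresh_{\cF,d}\le\delta_{\cF,d}$ (which is defined relative to $p^*_\cF$) does not follow. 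The paper closes exactly this gap in Section~\ref{sect:coupling} by passing to the $F$-complex and using Riordan's coupling theorem (Theorem~\ref{thm:coupling}) before applying FKNP to perfect matchings of the complex, where the spread and the threshold do match; an analogous issue for $\ell$-cycles with $\ell>1$ is handled via Spiro's strengthening of FKNP, though that is not needed for the four listed families. Your easy direction $\rdthresh_{\cF,d}\ge\delta_{\cF,d}$ and the treatment of items (1) and (4), where $q\log n$ really is $O(p^*_\cF)$, are fine.
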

\begin{enumerate}[label=(\arabic*)]
    \item\label{prev-result-hamiltonian} when $\mathcal{F}:=\{C_n\colon n\in\mathbb{N}\}$ where $C_n$ is a cycle on $n$ vertices (\textrm{Krivelevich, Lee, Sudakov} \cite{krivelevich2014robust});
    \item\label{prev-result-Kr-factor}for each $r\geq 2$, when $\mathcal{F}:=\{n \times K_r\colon n\in\mathbb{N}\}$ ($r$-clique-factors) (Pham, Sah, Sawhney, Simkin \cite{PSSS22}, the $r=3$ case was obtained by Allen, B\"ottcher, Corsten, Davies, Jenssen, Morris, Roberts, Skokan \cite{ABCDJMRS2022});
    \item\label{prev-result-spanning-tree} for each $C$, when $\mathcal{F}$ is made up of a single tree $T_n$ on $n$ vertices for each $n\in\mathbb{N}$, and $\Delta(T_n)\leq C$ (Pham, Sah, Sawhney, Simkin \cite{PSSS22});
    \item\label{prev-result-matchings} when $\mathcal{F} \coloneqq \{n \times K^{(k)}_k : n \in \mathbb N\}$ ($k$-uniform matchings), for any $d\in[k-1]$ (independently Pham, Sah, Sawhney, Simkin \cite{PSSS22} and Kang, Kelly, K\"uhn, Osthus, Pfenninger \cite{KKKOP22}).
\end{enumerate}

We remark that the first three results concern graphs, so $d=1$ in each case. Our main contribution in this paper is a unified strategy giving a far-reaching generalisation of the first, second, and fourth result stated above. In particular, this allows us to add hypergraph Hamilton cycles and $F$-factors for a much broader choice of $F$ (the second result concerns clique factors and the fourth result concerns perfect matchings, which are $K^{(k)}_k$-factors, where $K^{(k)}_n$ denotes the complete $k$-uniform $n$-vertex hypergraph) to the list above.  It is also conceivable that our methods work in even more generality than we explore in the current paper. 

We also remark that the first and the second result above hold in a stronger sense, with \textit{exact} minimum degree conditions as opposed to asymptotic ones, and Kang, Kelly, K\"uhn, Osthus, Pfenninger \cite{KKKOP22} obtained an exact result for the fourth result above in the case $d = k - 1$.
We refer the reader to the corresponding papers for more details.

\subsection{Hypergraph Hamilton cycles and Hamilton connectivity}\label{hamcycledefns}

We recall that for $0 \leq \ell <k$, a
$k$-uniform hypergraph is called an \textit{$\ell$-cycle} if its vertices can be ordered cyclically such that each of its
edges consists of $k$ consecutive vertices and every two consecutive edges (in the natural
order of the edges) share exactly $\ell$ vertices.
In particular, $(k-1)$-cycles and $1$-cycles are known as \textit{tight cycles} and \textit{loose cycles} respectively, and $0$-cycles are matchings. For $n\in (k-\ell)\mathbb{N}$, we let $C_{n, k, \ell}$ denote the $n$-vertex $\ell$-cycle with vertex set $[n]$ and edge set
\begin{equation*}
      \{[k + (k - \ell)(i- 1)]\setminus [(k - \ell)(i - 1)] : i \in [n / (k - \ell)]\},
\end{equation*}
where addition above is modulo $n$. We define
$k$-uniform $\ell$-paths $P_{n,k,\ell}$ analogously.

Let $\mathcal{C}_{\ell, k} \coloneqq \{C_{n,k,\ell} : n \in (k - \ell)\mathbb N\}$, and let $\mathcal{P}_{\ell, k} \coloneqq \{P_{n,k,\ell} : n \in (k - \ell)\mathbb N\}$.

Unlike in the setting of Theorem~\ref{Prop_prev_results}\ref{prev-result-Kr-factor} and Theorem~\ref{Prop_prev_results}\ref{prev-result-matchings}, we would like to find connected structures in our host graphs. We work with hypergraphs as opposed to graphs, so P\'osa-rotation-extension techniques as in Theorem~\ref{Prop_prev_results}\ref{prev-result-hamiltonian} are unavailable to us. For the same reason, relying on the regularity lemma (and related tools, such as the blow-up lemma) as in Theorem~\ref{Prop_prev_results}\ref{prev-result-spanning-tree} would be far from straight-forward. Instead, we use as a black box a connection between $\delta_{\mathcal{F}, d}$ and the following related parameter.

\begin{definition}[Hamilton connectivity threshold]
 By $\hamconnthresh_{k,\ell, d}$ we denote the smallest real number $\delta$ such that for all $\alpha>0$ and for all sufficiently large $n$ the following holds. For any $F\in \mathcal{P}_{\ell, k}$ with $|V(F)|=n$, for any $n$-vertex $k$-uniform hypergraph $H$ with $\delta_d(H)\geq (\delta+\alpha)\binom{n - d}{k - d}$, and any disjoint $S,T\subseteq V(H)$ with $|S|=|T|=\ell$, $H$ contains a copy of $F$ where the first and last $\ell$ vertices of $F$ (in the natural ordering of the vertices induced by the edges) are embedded to $S$ and $T$, respectively. 
\end{definition}
It is not hard to see that $\hamconnthresh_{k,\ell, d}\geq \delta_{\mathcal{C}_{\ell,k},d}$, and it is also known that $\hamconnthresh_{2,1, 1}=\delta_{\mathcal{C}_{1,2},1}$ (see for example \cite{DanielG}), but as far as we are aware, the parameter $\hamconnthresh_{k,\ell, d}$ has not been studied for other ranges of $k,\ell, d$.

\begin{proposition}\label{Prop_hamilton_connectivity}
    For each of the following range of parameters, we have that $\hamconnthresh_{k,\ell, d}=\delta_{\mathcal{C}_{\ell,k},d}$.
    \begin{enumerate}
            \item $d=k-1$;
            \item $1 \leq \ell < k/2$ and $d=k-2$;
            \item $\ell=k/2$ and $k/2 < d \le k-1$ with $k$ even.
    \end{enumerate}
\end{proposition}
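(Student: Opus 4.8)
The inequality $\hamconnthresh_{k,\ell,d} \ge \delta_{\mathcal{C}_{\ell,k},d}$ is the easy direction already noted above (a suitable Hamilton $\ell$-path between a well-chosen disjoint pair of $\ell$-sets can be closed into an $\ell$-cycle using the degree hypothesis, since the ``wrap-around'' $k$-sets needed to close the path depend only on its two prescribed ends, which we are free to pre-select), so the content is to prove the reverse inequality in each of the three listed regimes: namely, that $\delta_d(H)\ge(\delta_{\mathcal{C}_{\ell,k},d}+\alpha)\binom{n-d}{k-d}$ already forces, for every pair of disjoint $\ell$-sets $S,T\subseteq V(H)$, a Hamilton $\ell$-path whose first and last $\ell$ vertices are exactly $S$ and $T$. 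My plan is to run the usual absorption scheme used to establish the $\ell$-cycle thresholds themselves, but to build a path that \emph{starts at $S$ and ends at $T$} rather than one that closes up into a cycle. The crucial point is that the three regimes in the statement are precisely those for which this scheme — and in particular a \emph{connecting lemma} — is available at density $\delta_{\mathcal{C}_{\ell,k},d}+o(1)$: for $d=k-1$ this goes back to R\"odl--Ruci\'nski--Szemer\'edi (for $\ell=k-1$) and to K\"uhn--Osthus, Han--Schacht, and others (for $\ell<k/2$), while the co-degree regimes $d=k-2$ with $\ell<k/2$ and $\ell=k/2<d$ rest on the corresponding refinements in the literature.

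Concretely I would proceed as follows. First, extract (or reprove) a \textbf{connecting lemma}: under the degree hypothesis, any two ``ends'' of $\ell$-paths can be joined by a short $\ell$-path through almost any linear-sized set of vertices, and moreover this works when the prescribed ends are arbitrary ordered $\ell$-sets, not just ones of some special ``type''. Second, using the connecting lemma, build an \textbf{absorbing $\ell$-path} $P_{\mathrm{abs}}$ of sublinear order together with a small reservoir, such that every sufficiently small set of leftover vertices can be inserted into $P_{\mathrm{abs}}$ without changing its two ends; the standard absorber gadgets for $\ell$-cycles transfer verbatim, since they only ever use short-path connections and never the cyclic closure. Third, \textbf{assemble the path}: connect $S$ to one end of $P_{\mathrm{abs}}$, then use a covering/tiling step together with the connecting lemma to cover all of $V(H)$ except the reservoir and a tiny leftover, connect the far end of $P_{\mathrm{abs}}$ to $T$, and finally absorb the leftover into $P_{\mathrm{abs}}$. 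The outcome is a Hamilton $\ell$-path from $S$ to $T$, and throughout one only ever needs the degree of $H$ on ``most'' of its vertices, so the $\alpha$-slack comfortably pays for the reserved sets.

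I expect the main obstacle to be the connecting lemma in the two co-degree regimes ($d=k-2$ with $\ell<k/2$, and $\ell=k/2<d$): one must verify that the relevant short-path-connection estimates in those papers hold with completely arbitrary $\ell$-set endpoints and can be iterated a bounded number of times without degrading the co-degree below the threshold, and the precise parameter restrictions in the Proposition are exactly the ranges where this is true. The bookkeeping of absorber and reservoir sizes against the $\alpha$-slack is routine but must be done carefully in the hypergraph setting. A more economical route, which I would attempt first, is to check whether the cited proofs of the $\ell$-cycle Dirac thresholds already contain — or immediately yield — a Hamilton-$\ell$-path-with-prescribed-ends statement; in each of the three regimes this is plausible, in which case the Proposition follows by assembling those results rather than redoing the absorption argument from scratch.
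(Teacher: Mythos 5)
Your proposal is correct and follows essentially the same route as the paper: what you call the connecting lemma and the absorbing path with ends fixed independently of the absorbed set are exactly the abstract properties \textbf{Con} and \textbf{Ab} codified by Gupta, Hamann, M\"uyesser, Parczyk, and Sgueglia, which are known to hold at $\delta_{\mathcal{C}_{\ell,k},d}$ precisely in the three listed regimes, so your ``more economical route'' of assembling existing ingredients (reservoir, connect $S$ and $T$ to the absorber's fixed ends, absorb the leftover) is exactly what the paper does in Proposition~\ref{prop:absorption-proof-gives-hamilton-connectivity}. Two minor differences: the paper dispenses with a covering/tiling step by applying the hypothesis $\delta \geq \delta_{\mathcal{C}_{\ell,k},d}$ as a black box to the bulk (finding a Hamilton $\ell$-cycle outside the reservoir and absorber and cutting it into two paths), and for the easy inequality $\hamconnthresh_{k,\ell,d} \geq \delta_{\mathcal{C}_{\ell,k},d}$ your claim that the wrap-around edges ``depend only on the two prescribed ends'' is not literally right when $\ell < k-\ell$ (divisibility even forbids a spanning $\ell$-path and a spanning $\ell$-cycle on the same vertex set), so one should instead pre-select a short $\ell$-path and apply Hamilton connectivity to the complement of its interior with its ends prescribed — a direction the paper itself only asserts.
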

The first part of the above proposition is proved as Lemma 2.6 in \cite{MPS23}. See \Cref{Sec_hamilton_connectivity} for the short proof of the second and third part of the proposition. These assertions are a consequence of the more general phenomenon that whenever there is a  ``standard absorption proof'' (as codified in \cite{gupta2022general}) that $\delta_{\mathcal{C}_{\ell,k},d}\leq \delta$, then $\hamconnthresh_{k,\ell, d}\leq \delta$ as well (see \Cref{prop:absorption-proof-gives-hamilton-connectivity}). Therefore, it is quite feasible that Proposition~\ref{Prop_hamilton_connectivity} remains true for a wider range of the parameters. But we remark that known results imply, for example, that $\hamconnthresh_{k,\ell, d}>\delta_{\mathcal{C}_{\ell,k},d}$ when $k=3$, $\ell=2$ and $d=1$ (see \cite{RRRSS19}).

\subsection{New robustness results for Hamilton cycles and factors}
An important consequence of our main theorem (Theorem~\ref{thm:main-vertex-spread}, to be stated in the next subsection) is that $\rdthresh_{\mathcal{C}_{\ell,k}, d} \leq \hamconnthresh_{k,\ell,d}$ for \textit{all} ranges of the parameters. This is implied by the combination of the following two theorems.

\begin{theorem}\label{thm:loose-cycle}
    For every $\alpha>0$ and $k \in \mathbb N$, there exists $C = C_{\ref*{thm:loose-cycle}}(\alpha, k)$ such that the following holds for every $d\in [k-1]$.  
    Let $H$ be an $n$-vertex $k$-uniform hypergraph such that $\delta_d(H)\geq (\hamconnthresh_{k,1, d}+\alpha)\binom{n - d}{k - d}$.
    If $(k - 1) \mid n$ and $p \geq  C \log n / n^{k - 1}$, then \aas a random subhypergraph $H_p$ contains a loose Hamilton cycle.
\end{theorem}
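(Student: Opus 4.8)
The plan is to derive Theorem~\ref{thm:loose-cycle} from the main theorem (Theorem~\ref{thm:main-vertex-spread}) together with the fractional Kahn--Kalai theorem of Frankston, Kahn, Narayanan, and Park~\cite{FKNP21,park2022proof}, via the same mechanism by which KKKOP~\cite{KKKOP22} and PSSS~\cite{PSSS22} deduce robustness from spreadness. Since $(k-1)\mid n$ and $\delta_d(H)\ge(\hamconnthresh_{k,1,d}+\alpha)\binom{n-d}{k-d}$, Theorem~\ref{thm:main-vertex-spread} applies to the loose cycle $C_{n,k,1}$ and supplies a constant $K=K(\alpha,k)$ and a random embedding $\phi\colon C_{n,k,1}\hookrightarrow H$ that is $(K/n)$-vertex-spread, i.e.\ $\Pr[\phi\text{ extends }\phi']\le(K/n)^{|\supp\phi'|}$ for every partial embedding $\phi'$ of $C_{n,k,1}$ into $H$. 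Let $\mathcal{A}$ be the family of edge sets of copies of $C_{n,k,1}$ inside $H$; since $(k-1)\mid n$ this family is nonempty, and every member has exactly $r:=n/(k-1)$ edges. Pushing $\phi$ forward to the edge set of its image gives a probability measure $\mu$ on $\mathcal{A}$. I would then prove that $\mu$ is $q$-spread on the ground set $E(H)$ with $q=O_{\alpha,k}(n^{-(k-1)})$, and feed this into \cite{FKNP21}: as $\mathcal{A}$ is $r$-bounded with $r=O(n)$, there is $C=C(\alpha,k)$ such that whenever $p\ge C\log n/n^{k-1}\ge K_0 q\log r$ the $p$-random subset $E(H)_p=E(H_p)$ a.a.s.\ contains a member of $\mathcal{A}$; that is, $H_p$ a.a.s.\ contains a loose Hamilton cycle.

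The substantive step is the spread bound. Fix $D=\{e_1,\dots,e_m\}\subseteq E(H)$; the goal is $\mu(\{F\in\mathcal{A}:D\subseteq F\})=\Pr[e_1,\dots,e_m\in\phi(C_{n,k,1})]\le(C'n^{-(k-1)})^m$ for a suitable $C'=C'(\alpha,k)$. Index the edges of the template by $\mathbb{Z}_r$. If all the $e_i$ lie in the image $\phi(C_{n,k,1})$, then some injection $\rho\colon[m]\hookrightarrow\mathbb{Z}_r$ sends template edges onto $e_1,\dots,e_m$, and $\phi$ restricted to $U_\rho:=\bigcup_i(\text{template edge }\rho(i))$ is a partial embedding with image $\bigcup_i e_i$; if $D$ is not a union of loose paths in $H$ no such $\rho$ exists and the probability is $0$. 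Suppose $\rho([m])$ decomposes into $t$ maximal runs of consecutive template edges. A run of $r'$ edges of a loose cycle spans $r'(k-1)+1$ vertices, so $|U_\rho|=m(k-1)+t$ when $m<r$ (and $|U_\rho|=n=m(k-1)$ in the degenerate case $m=r$, which only helps). Hence each of the resulting partial embeddings has support of size $m(k-1)+t$, and $(K/n)$-vertex-spreadness bounds its probability by $(K/n)^{m(k-1)+t}$. It remains to count the partial embeddings arising this way. Given the run decomposition of $D$, the number of admissible vertex bijections of $U_\rho$ onto $\bigcup_i e_i$ is at most $(k!)^m$; the number of ways of partitioning $D$ into runs is at most $2^m$; and, enumerating the $t$ runs in a fixed canonical order, a run of $r'$ template edges has at most $r$ choices of starting position and at most two orientations, contributing $\le 3r$ rather than $n^{r'}$. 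Thus the number of relevant partial embeddings with $t$ runs is at most $2^m(k!)^m(3r)^t$, and summing over $t$, $\Pr[e_1,\dots,e_m\in\phi(C_{n,k,1})]\le (2k!)^m(K/n)^{m(k-1)}\sum_{t\ge1}(3rK/n)^t\le(C'n^{-(k-1)})^m$, using $r/n=1/(k-1)=O(1)$ and absorbing polynomial-in-$m$ factors into $C'$. This gives the $q$-spread bound claimed above.

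Since Theorem~\ref{thm:main-vertex-spread} is used as a black box, the only genuine obstacle is this spread-transfer estimate, and the single point that must not be mishandled is the over-counting in the union bound over assignments $\rho$: choosing a template position for each edge of $D$ independently inflates the count by a super-exponential factor. The fix is to exploit that two edges of $D$ which are consecutive in some loose Hamilton cycle must be adjacent in $H$, so that a run of $r'$ template edges has only $O(n)$ placements instead of $n^{r'}$, and to enumerate the runs in a canonical order so as to avoid a spurious $t!$. The remaining ingredients --- the push-forward to a measure on edge sets, matching $K_0 q\log r$ with $C\log n/n^{k-1}$ since $r=\Theta(n)$, and upgrading ``probability bounded away from $0$'' to ``a.a.s.'' --- are routine and proceed exactly as in the corresponding deductions of \cite{KKKOP22,PSSS22}.
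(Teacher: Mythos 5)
Your proposal is correct and follows the same skeleton as the paper's proof: obtain a $(O(1)/n)$-vertex-spread embedding of $C_{n,k,1}$ from Theorem~\ref{thm:main-vertex-spread}, transfer vertex-spreadness to $O(1/n^{k-1})$-spreadness of the pushed-forward measure on edge sets of loose Hamilton cycles, and finish with the FKNP theorem (the family is $n/(k-1)$-bounded, so $\log r=\Theta(\log n)$ and $p\geq C\log n/n^{k-1}$ suffices). The only divergence is the transfer step: the paper invokes the general Proposition~\ref{prop:vtx-spread-implies-spread}, whose proof (via Lemma~\ref{lemma:partial-injections-bound}) enumerates partial embeddings by rooting each component, paying $n$ per root and $O(k\maxdeg(G))$ per further vertex, combined with the computation of $m_1(C_{n,k,1})$ in Lemma~\ref{lemma:ell-cycle-subgraph-bound}; you instead hand-roll the same count for loose cycles via the run decomposition, paying $O(n)$ per run and $O(1)$ per edge. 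These are the same idea — a run of the template corresponds exactly to a connected component of the subgraph spanned by $D$ — so your specialization buys self-containedness but nothing more, while the paper's general proposition is reused for the factor and $\ell>1$ applications. Two small points in your write-up should be tightened, though neither is a genuine gap: first, "the number of ways of partitioning $D$ into runs is at most $2^m$" is not a valid bound for arbitrary set partitions, but it is also unnecessary, since in a loose cycle two edges share a vertex if and only if they are consecutive, so the runs are forced to be precisely the components of the hypergraph spanned by $D$ (in particular $t$ is determined by $D$, not a free summation index); second, $\sum_{t\ge 1}(3rK/n)^t$ diverges as written because $3rK/n=3K/(k-1)$ is a constant exceeding $1$, so you should either use that $t$ is determined or truncate at $t\le m$ and absorb the resulting factor $m\,(3K/(k-1))^m$ into $(C')^m$.
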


\begin{theorem}\label{thm:ell>1-cycle}
    For every $\alpha>0$ and $k \in \mathbb N$,  the following holds for every $d\in [k-1]$ and $\ell \in \{2, \dots, k - 1\}$.  
    Let $H$ be an $n$-vertex $k$-uniform hypergraph such that $\delta_d(H)\geq (\hamconnthresh_{k,\ell, d}+\alpha)\binom{n - d}{k - d}$.
    If $(k - \ell) \mid n$ and $p = \omega(1 / n^{k-\ell})$, then \aas a random subhypergraph $H_p$ contains a Hamilton $\ell$-cycle.
\end{theorem}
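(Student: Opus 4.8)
The plan is to deduce the theorem from the main result, \Cref{thm:main-vertex-spread}, together with the standard spread‑to‑threshold machinery. First, apply \Cref{thm:main-vertex-spread} to $G\coloneqq C_{n,k,\ell}$ with host hypergraph $H$: the hypothesis $\delta_d(H)\ge(\hamconnthresh_{k,\ell,d}+\alpha)\binom{n-d}{k-d}$ is exactly what the main theorem demands, and $(k-\ell)\mid n$ ensures that $C_{n,k,\ell}$ is spanning, so we obtain an $O(1/n)$-vertex-spread probability distribution $\mu$ on embeddings $f\colon C_{n,k,\ell}\hookrightarrow H$. Pushing $\mu$ forward along $f\mapsto E_f\coloneqq f(E(C_{n,k,\ell}))$ yields a probability distribution $\nu$ supported on edge sets of copies of $C_{n,k,\ell}$ in $H$, each of size $n/(k-\ell)$.

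The key step is to quantify how spread $\nu$ is. Fix $J\subseteq E(H)$ with $|J|=j$, and let $J=J_1\cupdot\dots\cupdot J_b$ be the decomposition of $J$ into connected components under the relation ``share a vertex'', with $j_t\coloneqq|J_t|$. For $E_f$ to contain $J$, the embedding $f$ must map some sub-hypergraph of $C_{n,k,\ell}$ isomorphically onto $(V(J),J)$; since $C_{n,k,\ell}$ is rigid, each $J_t$ can be placed inside $C_{n,k,\ell}$ in only $O(n)$ ways, and any such placement uses at least $j_t(k-\ell)+\ell$ vertices, so in particular $|V(J)|\ge j(k-\ell)+b\ell$. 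Bounding the probability of each placement by vertex-spreadness and summing, a short computation gives
\[
  \Prob{E_f\supseteq J}\;\le\;O(1)^{j}\cdot n^{\,b-|V(J)|}\;\le\;O(1)^{j}\cdot n^{-j(k-\ell)}\cdot n^{-(\ell-1)b},
\]
where the implied constant depends only on $k$. Thus $\nu$ is $O(n^{-(k-\ell)})$-spread, and, crucially, when $\ell\ge2$ there is an extra polynomial factor $n^{-(\ell-1)b}\le n^{-1}$ of slack beyond the bare spread bound.

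It remains to pass from spreadness to the statement about $H_p$. Plugging the bare $O(n^{-(k-\ell)})$-spreadness of $\nu$ into the Frankston--Kahn--Narayanan--Park theorem~\cite{FKNP21} (or Park--Pham~\cite{park2022proof}) only gives $p\ge C\log n/n^{k-\ell}$; this is exactly the loose case handled in \Cref{thm:loose-cycle}, where $\ell=1$ makes the slack factor $n^{-(\ell-1)b}$ trivial. For $\ell\ge2$ the extra polynomial head-start is what should remove the logarithm, for instance via a multi-round exposure in which the intermediate rounds find non-spanning $\ell$-path systems (for which the union bound is $\log n$-free, since the target is not spanning) and a final round closes the last $\ell$-path into a Hamilton $\ell$-cycle using the abundance of ``connecting'' configurations guaranteed by the minimum-degree hypothesis --- the very property measured by $\hamconnthresh_{k,\ell,d}$. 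I expect this last step to be the main obstacle: removing the $\log n$ factor genuinely requires more than the off-the-shelf spread lemma, so one must either invoke a multi-round / ``superspread'' refinement of~\cite{FKNP21} or carry out the bookkeeping by hand to check that the $n^{-(\ell-1)b}$ of slack compensates, across all scales $j$, for the logarithmic loss. The remaining ingredients --- matching the hypotheses, the pushforward, and the vertex-to-edge translation of spreadness --- are routine.
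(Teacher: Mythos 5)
Your set-up (apply \Cref{thm:main-vertex-spread}, push the vertex-spread measure forward to a measure $\nu$ on edge sets, and observe the extra slack $n^{-(\ell-1)b}$ coming from $|V(J)|\ge j(k-\ell)+b\ell$) is exactly the right starting point, and it matches the paper's computation in spirit. But the step you yourself flag as "the main obstacle" is a genuine gap, and it is precisely where the paper's proof lives. The paper removes the logarithm by invoking Spiro's strengthening of FKNP (\Cref{thm:spiro}) with the two-level spread notion of \Cref{def:spiro-spread}, i.e.\ a $(q;\,n/(k-\ell),\,1)$-spread distribution, which is essentially a second-moment statement. To feed that theorem, a containment bound $\Prob{E_f\supseteq J}\le q^{|J|}$ (which is all your computation gives) is not enough: one must bound $\nu(\{A:|A\cap S|=t\})$ for \emph{every} $t$ and every subset $S$ of the edge set of a Hamilton $\ell$-cycle, which forces a sum over all subgraphs of $C_{n,k,\ell}$ with $t$ edges inside $S$ and $c$ components. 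This requires a counting lemma (the paper's \Cref{lemma:number-of-subgraphs-bound}, giving at most $\binom{k|S|}{c}(2\cdot 16^k)^t$ such subgraphs) and then the verification, in \Cref{prop:ell-cycle-spreadness}, that the resulting sum over $c$ converges -- and this is exactly where your slack factor is spent: the summand carries $(2k^2C/n)^{(\ell-2)c}$, which is only harmless because $\ell\ge 2$. So the "bookkeeping across all scales" you defer is not routine slack-tracking on top of plain FKNP; it is a different spreadness statement that must be proved from the vertex-spread measure.

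Your sketched alternative (multi-round exposure building non-spanning $\ell$-path systems and closing them using Hamilton connectivity) is also not a substitute as stated: the connecting configurations guaranteed by $\hamconnthresh_{k,\ell,d}$ live in $H$, not in $H_p$, and at $p=\omega(1/n^{k-\ell})$ any fixed constant-length connector survives sparsification only with probability $o(1)$, so closing up a spanning structure in the final round would need its own absorption-type argument that your proposal does not supply. In short: the reduction and the slack estimate are correct and coincide with the paper's, but the deduction of the theorem requires the stronger exact-intersection spread bound of \Cref{prop:ell-cycle-spreadness} combined with Spiro's two-level theorem \cite{Sp21}, and that component is missing from your argument.
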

We chose to state these two theorems separately, to highlight the difference in the corresponding values of $p$ we need to work with for the cases of $\ell\leq 1$ and $\ell>1$ cases for the assertion that $\rdthresh_{\mathcal{C}_{\ell,k}, d} \leq \hamconnthresh_{k,\ell,d}$. Indeed, the function $p^*_{\mathcal{C}_{\ell,k}}$ is $\Theta( \log n / n^{k - 1})$ when $\ell\leq 1$ whereas it is $\Theta( 1 / n^{k - \ell})$ when $\ell>1$ (see \cite{DF11, DF13}). Roughly speaking, this difference is caused by isolated vertices being the bottleneck for random hypergraphs to contain perfect matchings or loose Hamilton cycles ($\ell\leq 1$), whereas for $\ell>1$, the bottleneck comes from a first-moment argument. Due to this subtlety, deriving Theorem~\ref{thm:ell>1-cycle} from Theorem~\ref{thm:main-vertex-spread} is more challenging. To circumvent this difficulty, we benefit from a recent generalisation of Spiro \cite{Sp21} of Talagrand's conjecture. 

\par Combining this with the connection between the parameters $\hamconnthresh_{k,\ell,d}$ and $\delta_{\mathcal{C}_{\ell,k},d}$, we derive the following, which considerably extends the families of hypergraphs listed in Theorem~\ref{Prop_prev_results}. 

\begin{corollary} For each $k,\ell$ and $d$ such that $\hamconnthresh_{k,\ell, d}=\delta_{\mathcal{C}_{\ell,k},d}$, we have that $\rdthresh_{\mathcal{C}_{\ell,k}, d}= \delta_{\mathcal{C}_{\ell,k}, d}$. In particular, for the ranges of $k,\ell$ and $d$ described in Proposition~\ref{Prop_hamilton_connectivity}, we have that  $\rdthresh_{\mathcal{C}_{\ell,k}, d}= \delta_{\mathcal{C}_{\ell,k}, d}$.
\end{corollary}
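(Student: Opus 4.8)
The plan is to sandwich $\rdthresh_{\mathcal{C}_{\ell,k},d}$ between $\delta_{\mathcal{C}_{\ell,k},d}$ and $\hamconnthresh_{k,\ell,d}$ and then invoke the hypothesis. Fix $k,\ell,d$ with $\hamconnthresh_{k,\ell,d}=\delta_{\mathcal{C}_{\ell,k},d}$. One half of the sandwich, $\rdthresh_{\mathcal{C}_{\ell,k},d}\ge\delta_{\mathcal{C}_{\ell,k},d}$, is immediate and already noted in the text (it follows on taking $p=1$ in the definition of $\rdthresh$, since $H_1=H$). So everything reduces to establishing
\[
    \rdthresh_{\mathcal{C}_{\ell,k},d}\;\le\;\hamconnthresh_{k,\ell,d};
\]
combined with $\hamconnthresh_{k,\ell,d}=\delta_{\mathcal{C}_{\ell,k},d}$ this forces equality throughout, which proves the first assertion. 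The ``in particular'' clause is then automatic, since Proposition~\ref{Prop_hamilton_connectivity} establishes $\hamconnthresh_{k,\ell,d}=\delta_{\mathcal{C}_{\ell,k},d}$ for exactly the ranges listed there, so the general assertion applies to them verbatim.

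To prove the displayed inequality, unwind the definition, using that $\rdthresh_{\mathcal{C}_{\ell,k},d}=\rdthresh_{\mathcal{C}_{\ell,k},d,\,p^*_{\mathcal{C}_{\ell,k}}}$: writing $\delta_0:=\hamconnthresh_{k,\ell,d}$, we must show that for all $\alpha,\eps>0$ there is $C>0$ such that, for all large $n$ with $(k-\ell)\mid n$ and every $n$-vertex $k$-uniform $H$ with $\delta_d(H)\ge(\delta_0+\alpha)\binom{n-d}{k-d}$, the random subhypergraph $H_p$ contains $C_{n,k,\ell}$ with probability at least $1-\eps$ whenever $p\ge C\,p^*_{\mathcal{C}_{\ell,k}}(n)$. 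For $\ell=1$ this is exactly what Theorem~\ref{thm:loose-cycle} provides once the probability scales are matched: by \cite{DF11,DF13} one has $p^*_{\mathcal{C}_{1,k}}(n)\ge c\log n/n^{k-1}$ for some $c=c(k)>0$ and all large $n$, so setting $C:=C_{\ref*{thm:loose-cycle}}(\alpha,k)/c$ turns the hypothesis $p\ge C\,p^*_{\mathcal{C}_{1,k}}(n)$ into $p\ge C_{\ref*{thm:loose-cycle}}(\alpha,k)\log n/n^{k-1}$, and Theorem~\ref{thm:loose-cycle} then produces a loose Hamilton cycle in $H_p$ asymptotically almost surely, hence with probability at least $1-\eps$ for large $n$. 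The degenerate value $\ell=0$ (where $\hamconnthresh_{k,0,d}=\delta_{\mathcal{C}_{0,k},d}$ holds trivially and $\mathcal{C}_{0,k}$ is the family of perfect matchings) is precisely Theorem~\ref{Prop_prev_results}\ref{prev-result-matchings}.

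The remaining case $2\le\ell\le k-1$ is the only delicate point, and it is exactly the subtlety flagged before the statement. Here \cite{DF11,DF13} give $p^*_{\mathcal{C}_{\ell,k}}(n)=\Theta(1/n^{k-\ell})$, so a constant multiple of $p^*_{\mathcal{C}_{\ell,k}}(n)$ is still only of order $n^{-(k-\ell)}$ and does \emph{not} lie in the regime $\omega(n^{-(k-\ell)})$ for which the clean a.a.s.\ version of Theorem~\ref{thm:ell>1-cycle} is phrased; monotonicity does not bridge this gap, since the worst-case Dirac threshold could a priori sit strictly between $n^{-(k-\ell)}$ and $\omega(n^{-(k-\ell)})$. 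The resolution is that the proof of Theorem~\ref{thm:ell>1-cycle}, which runs through Spiro's \cite{Sp21} strengthening of Talagrand's fractional expectation-threshold conjecture, in fact yields the sharper quantitative statement: for every $\eps>0$ there is $C_\eps=C_\eps(\alpha,k,\ell)$ such that $p\ge C_\eps\,n^{-(k-\ell)}$ already forces $\Prob{H_p\supseteq C_{n,k,\ell}}\ge1-\eps$. Granting this, pick $c=c(k,\ell)>0$ with $p^*_{\mathcal{C}_{\ell,k}}(n)\ge c\,n^{-(k-\ell)}$ for large $n$ and put $C:=C_\eps/c$; then $p\ge C\,p^*_{\mathcal{C}_{\ell,k}}(n)$ implies $p\ge C_\eps\,n^{-(k-\ell)}$, which completes this case and hence the displayed inequality.

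Thus, once Theorems~\ref{thm:loose-cycle} and~\ref{thm:ell>1-cycle} are in hand, the corollary is a short bookkeeping argument; the real content, and the only genuine obstacle, lies upstream in deriving those two theorems from the spread distribution of Theorem~\ref{thm:main-vertex-spread} --- and, for $\ell\ge 2$, in using Spiro's result to upgrade a.a.s.\ containment at $p=\omega(n^{-(k-\ell)})$ to the high-probability containment at $p=\Theta(n^{-(k-\ell)})$ demanded by the threshold $p^*_{\mathcal{C}_{\ell,k}}$.
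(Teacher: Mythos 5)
Your sandwich argument ($\delta_{\mathcal{C}_{\ell,k},d}\le\rdthresh_{\mathcal{C}_{\ell,k},d}\le\hamconnthresh_{k,\ell,d}$, with the trivial lower bound and the upper bound supplied by Theorems~\ref{thm:loose-cycle} and~\ref{thm:ell>1-cycle} together with the known orders of $p^*_{\mathcal{C}_{\ell,k}}$, then equality from the hypothesis and Proposition~\ref{Prop_hamilton_connectivity}) is exactly how the paper derives this corollary, and the quantitative statement you "grant" for $\ell\ge 2$ is indeed what the paper's proof of Theorem~\ref{thm:ell>1-cycle} delivers, since Theorem~\ref{thm:spiro} applied to the spread distribution of Proposition~\ref{prop:ell-cycle-spreadness} gives containment probability at least $1-K_{\ref{thm:spiro}}/C$ once $p\ge CC'/n^{k-\ell}$. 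The only minor inaccuracy is your aside that monotonicity cannot bridge the $\omega(1/n^{k-\ell})$ versus $C/n^{k-\ell}$ gap: since containment is monotone in $p$ and the theorem is uniform over admissible $H$, a routine diagonalisation does convert the a.a.s.\ statement into the "for every $\eps$ there is $C$" form, but this is immaterial because the route you actually take is valid and matches the paper.
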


Independently of our work, Anastos, Chakraborti, Kang, Methuku, and Pfenninger \cite{anastos2023} also obtained that $\rdthresh_{\mathcal{C}_{\ell,k}, d}= \delta_{\mathcal{C}_{\ell,k}, d}$ in the regime where $d=k-1$ and $\ell<k/2$. 
In fact, in this regime they obtained a stronger result working with \textit{exact} minimum degree thresholds.
We conjecture that the hypothesis $\hamconnthresh_{k,\ell, d}=\delta_{\mathcal{C}_{\ell,k},d}$ is not required in the above statement. See Section~\ref{sec:openproblems} for more details. 

For a hypergraph $F$, let $\cF_F$ be a family of disjoint unions of copies of $F$ containing one element with $i|V(F)|$ vertices for every $i \in \mathbb N$, and let $\delta_{F,d} \coloneqq \delta_{\cF_F, d}$.  Hence, if an $n$-vertex hypergraph $H$ satisfies $\delta_d(H) \geq (\delta_{F,d} + \alpha)\binom{n - d}{k-d}$ and $n$ is sufficiently large and divisible by $|V(F)|$, then $H$ contains an \textit{$F$-factor}; that is, $H$ contains a spanning subgraph where every component is a copy of $F$.  
Our next theorem implies $\rdthresh_{\cF_F, d} = \delta_{F, d}$ when $F$ is a \textit{strictly $1$-balanced} uniform hypergraph.    
For a hypergraph $H$ with at least two vertices, the \textit{$1$-density} of $H$ is $d_1(H) \coloneqq |E(H)| / (|V(H)| - 1)$.  A hypergraph $F$ is \textit{$1$-balanced} if $d_1(F') \leq d_1(F)$ for every $F' \subseteq F$ and \textit{strictly $1$-balanced} if $d_1(F') < d_1(F)$ for every $F' \subsetneq F$.
For strictly $1$-balanced $k$-uniform $F$, Johansson, Kahn, and Vu~\cite{JKV08} proved that the threshold for a binomial random $k$-uniform hypergraph to contain an $F$-factor is $\Theta(n^{-1 / d_1(F)}\log^{1 / |E(F)|}n)$.  
  
\begin{theorem}\label{thm:F-factors-robustness}
    For every $\alpha>0$ and $k,r \in \mathbb N$, there exists $C = C_{\ref*{thm:F-factors-robustness}}(\alpha, k, r) > 0$ such that the following holds for every $d\in [k-1]$.  
    Let $F$ be a $k$-uniform $r$-vertex strictly $1$-balanced hypergraph, and let $H$ be an $n$-vertex $k$-uniform hypergraph such that $\delta_d(H)\geq (\delta_{F,d}+\alpha)\binom{n - d}{k - d}$.
    If $r \mid n$ and $p \geq C n^{-1 / d_1(F)}\log^{1 / |E(F)|}n$, then \aas a random subhypergraph $H_p$ contains an $F$-factor.
\end{theorem}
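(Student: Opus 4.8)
The plan is to derive Theorem~\ref{thm:F-factors-robustness} from the main vertex-spread theorem together with the spread-based technology for thresholds; the delicate point is recovering the sharp power $(\log n)^{1/|E(F)|}$ rather than $\log n$, and this is exactly where strict $1$-balancedness enters.

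\textbf{Step 1 (a spread distribution on $F$-factors).} An $F$-factor on $V(H)$ is precisely a disjoint union of $n/r$ copies of $F$, so Theorem~\ref{thm:main-vertex-spread} applies with $G$ equal to the $F$-factor, under the hypothesis $\delta_d(H)\ge(\delta_{F,d}+\alpha)\binom{n-d}{k-d}$, and produces a random embedding $\phi$ of the $F$-factor into $H$ that is $O(1/n)$-vertex-spread: for every partial embedding $\phi'$ on a set of $s$ vertices, $\Pr[\phi\supseteq\phi']\le(K/n)^{s}$ for a constant $K=K(\alpha,k,r)$. Let $\nu$ be the pushforward of $\phi$ onto copies of the $F$-factor in $H$, regarded as subsets of $E(H)$; distinct embeddings giving the same copy cost only a factor $|\mathrm{Aut}(F)|=O_{k,r}(1)$, and every such subset has size $(n/r)\,|E(F)|$.

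\textbf{Step 2 (spread at the level of copies of $F$).} The crucial point is to measure spread not edge-by-edge but in units of copies of $F$. Equivalently, an $F$-factor of $H_p$ is a perfect matching of the auxiliary $r$-uniform hypergraph $\mathcal{H}$ on $V(H)$ whose edges are the vertex sets of copies of $F$ in $H$, built from copies all of whose $|E(F)|$ edges survive into $H_p$. For a set $B$ of $m$ pairwise disjoint copies of $F$ -- i.e.\ a partial matching of $\mathcal{H}$, spanning $rm$ vertices -- summing over the at most $(n/r)^{m}\,O_{k,r}(1)^{m}$ ways to realise them as components of the factor and applying vertex-spread gives
\[
\Pr_{\nu}\bigl[\text{every member of }B\text{ is a component}\bigr]\ \le\ (n/r)^{m}\,O_{k,r}(1)^{m}\,(K/n)^{rm}\ =\ \bigl(C_{0}/n^{r-1}\bigr)^{m}
\]
for a constant $C_{0}=C_{0}(\alpha,k,r)$. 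So $\nu$ is ``$q$-spread at the atomic level'' with $q=C_{0}n^{-(r-1)}$, and here one uses the identity $n^{-(r-1)}=\bigl(n^{-1/d_1(F)}\bigr)^{|E(F)|}$. Strict $1$-balancedness of $F$ is what makes this value optimal: for a merely $1$-balanced $F$ a proper subgraph $F'\subsetneq F$ with $d_1(F')=d_1(F)$ would itself have to be atomically covered, forcing a larger effective $q$ and hence a larger power of $\log n$ in the true threshold.

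\textbf{Step 3 (apply the threshold machinery).} Feeding the atomic spread bound into a form of the Frankston--Kahn--Narayanan--Park theorem~\cite{FKNP21} that sees this structure -- measuring the ``size'' of an $F$-factor as its number $n/r$ of copies of $F$ rather than its number of edges, which is the kind of refinement supplied by Spiro's generalisation~\cite{Sp21} of Talagrand's conjecture -- gives that $H_{p}$ contains an $F$-factor \aas once $p^{|E(F)|}\ge C\,q\,\log(n/r)$, that is, once $p\ge C'n^{-1/d_1(F)}(\log n)^{1/|E(F)|}$, which is exactly the Johansson--Kahn--Vu threshold. Morally: each vertex must be covered by a copy of $F$, a minimal obstruction of $|E(F)|$ edges, so the unavoidable logarithmic loss is amortised over those $|E(F)|$ edges.

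\textbf{Main obstacle.} The heart of the matter is Step 3: a plain spread-to-threshold result applied edge-by-edge to $\nu$ on $E(H)$ only yields $p\ge Cn^{-1/d_1(F)}\log n$, losing a factor $(\log n)^{1-1/|E(F)|}$; recovering the sharp exponent forces one to carry the atomic structure through the whole argument -- equivalently, to control the positive correlations between overlapping copies of $F$ when passing to a random sub-hypergraph of $\mathcal{H}$ -- and this is precisely where strict $1$-balancedness is needed. Steps 1 and 2, by contrast, amount to a black-box application of Theorem~\ref{thm:main-vertex-spread} plus routine bookkeeping.
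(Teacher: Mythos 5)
Your Steps 1 and 2 follow the paper's route: the vertex-spread distribution on $F$-factor embeddings is Theorem~\ref{thm:vtx-spread-for-factors} (not Theorem~\ref{thm:main-vertex-spread}, which is the Hamilton-cycle statement), and your computation that the induced distribution on perfect matchings of the auxiliary $r$-uniform hypergraph of copies of $F$ is $(C_0/n^{r-1})$-spread is exactly what the paper obtains from Proposition~\ref{prop:vtx-spread-implies-spread} applied to the (simplified) $F$-complex. The genuine gap is Step 3. The FKNP theorem, and equally Spiro's Theorem~\ref{thm:spiro}, apply to a $p$-random subset of the ground set, i.e.\ a product measure in which each ground-set element is retained independently. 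In your setup the ground set is the set of copies of $F$ in $H$, and the random subset induced by $H_p$ --- a copy survives iff all of its $|E(F)|$ edges survive --- is \emph{not} a product measure: overlapping copies share edges of $H$, so their survival events are correlated. Neither FKNP nor Spiro's refinement ``sees'' this: Spiro's theorem relaxes the spread hypothesis, not the independence hypothesis, and in this paper it is used only for Hamilton $\ell$-cycles with $\ell\geq 2$ to remove the logarithm; it contributes nothing to the correlation problem for factors. So the phrase ``a form of FKNP that sees this structure'' is precisely the missing proof, which your own ``main obstacle'' paragraph acknowledges but never supplies.

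The paper bridges exactly this gap with a coupling theorem of Riordan (Theorem~\ref{thm:coupling}, i.e.\ \cite[Theorem 18]{Ri22}): for strictly $1$-balanced $F$ and $p\leq \log^2(n)/n^{1/d_1(F)}$ one can couple the binomial random hypergraph with the binomial $F$-complex $\cG_F(n,\pi)$, where $\pi\sim a\,p^{|E(F)|}$, so that a.a.s.\ every $F$-edge present in the latter corresponds to a copy of $F$ fully present in the former (monotonicity lets one assume $p$ is exactly of the stated order so the hypothesis of the coupling holds). This converts the correlated survival events into a genuinely independent $\pi$-random subhypergraph of the $F$-complex, to which plain FKNP applies with the $(C'/n^{r-1})$-spread matching distribution, since $\pi=\Theta(p^{|E(F)|})\geq K C'\log(n/r)/n^{r-1}$. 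Note that this coupling is where strict $1$-balancedness is actually used; your Step 2 does not need it (Theorem~\ref{thm:vtx-spread-for-factors} holds for arbitrary $F$), and your heuristic that strict balancedness makes the ``atomic'' spread value optimal is beside the point. Without Riordan's coupling or a substitute argument controlling these correlations, Step 3 does not go through.
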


Complete graphs are strictly $1$-balanced, so Theorem~\ref{thm:F-factors-robustness} implies Theorem \ref{Prop_prev_results}\ref{prev-result-Kr-factor}.  A $k$-uniform $k$-vertex hypergraph consisting of a single edge is also strictly $1$-balanced, so Theorem~\ref{thm:F-factors-robustness} implies Theorem~\ref{Prop_prev_results}\ref{prev-result-matchings}.

\subsection{Vertex-spread thresholds}
The following definition, originally introduced in \cite{PSSS22}, is at the heart of the paper.
\begin{definition}
    Let $X$ and $Y$ be finite sets, and let $\mu$ be a probability distribution over injections $\varphi : X \rightarrow Y$.  
    For $q \in [0, 1]$, we say that $\mu$ is \textit{$q$-vertex-spread} if for every two sequences of distinct vertices $x_1, \dots, x_s \in X$ and $y_1, \dots, y_s \in Y$,
    \begin{equation*}           
    \mu\left(\left\{\varphi : \varphi(x_i)=y_i \text{ for all }i \in [s]\right\}\right) \leq q^s.
    \end{equation*}    
\end{definition}

A \textit{hypergraph embedding} $\varphi : G \hookrightarrow H$ of a hypergraph $G$ into a hypergraph $H$ is an injective map $\varphi : V(G) \rightarrow V(H)$ that maps edges of $G$ to edges of $H$, so there is an embedding of $G$ into $H$ if and only if $H$ contains a subgraph isomorphic to $G$.  
Note that the uniformly random embedding $\varphi : G\hookrightarrow H$ when $G$ and $H$ have the same vertex set and $H$ is complete is just a uniformly random permutation of $V(H)$, which is $(e / |V(H)|)$-vertex-spread (using Stirling's approximation).  

\begin{definition}[VSD-threshold/Vertex-spread Dirac threshold] Let $\mathcal{F}$ be an infinite family of $k$-uniform hypergraphs. By $\vsdthresh_{\mathcal{F},d}$ we denote the smallest real number $\delta$ such that for all $\alpha>0$, there exists $C > 0$ such that the following holds for all but finitely many $F\in \mathcal{F}$.
If $n=|V(F)|$ and $H$ is any $n$-vertex $k$-uniform hypergraph with $\delta_d(H) \ge (\delta+\alpha)\binom{n-d}{k-d}$, then there is a $(C/n)$-vertex-spread distribution on embeddings of $F\hookrightarrow H$.
\end{definition}

Note that for every family $\mathcal{F}$, we have that $\delta_{\mathcal{F},d} \leq \rdthresh_{\mathcal{F},d}, \vsdthresh_{\mathcal{F},d} \leq 1 $.


The main results of this paper are that for every $k\in\mathbb N$,
\begin{enumerate}
    \item $\rdthresh_{\mathcal C_{k,\ell}, d} \leq \vsdthresh_{\mathcal{C}_{k,\ell},d} \leq \hamconnthresh_{k,\ell, d}$ for every $\ell, d \in [k-1]$ and 
    \item $\vsdthresh_{\cF_F, d} = \delta_{F, d}$ for every $k$-uniform $F$ and $d \in [k - 1]$ and moreover $\rdthresh_{\cF_F, d} = \delta_{F, d}$ if $F$ is strictly $1$-balanced.
\end{enumerate}
The following theorem implies $\vsdthresh_{\mathcal{C}_{k,\ell},d} \leq \hamconnthresh_{k,\ell, d}$, for all ranges of the parameters.
\begin{theorem}\label{thm:main-vertex-spread} 
For every $\alpha>0$ and $k \in \mathbb N$, there exists $C = C_{\ref*{thm:main-vertex-spread}}(\alpha, k)$ such that the following holds for every $\ell, d\in [k-1]$, and every sufficiently large $n$ for which $k - \ell$ divides $n$.  If $H$ is an $n$-vertex $k$-uniform hypergraph such that $\delta_d(H)\geq (\hamconnthresh_{k,\ell, d}+\alpha)\binom{n - d}{k - d}$, then there is a $(C/n)$-vertex-spread distribution on embeddings $C_{n,k,\ell} \hookrightarrow H$.
\end{theorem}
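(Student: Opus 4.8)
# Proof Proposal for Theorem~\ref{thm:main-vertex-spread}

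\textbf{Overall strategy.} The plan is to build the embedding of $C_{n,k,\ell}$ into $H$ in two stages: a \emph{random almost-spanning path packing} stage, followed by a short \emph{connection/completion} stage that uses the Hamilton-connectivity hypothesis as a black box. The key point is that the $\ell$-cycle $C_{n,k,\ell}$ decomposes naturally into a bounded number (say $t = t(\alpha,k)$) of $\ell$-paths $P^{(1)}, \dots, P^{(t)}$, each on roughly $n/t$ vertices, glued cyclically at their $\ell$-vertex ends. I would first pick $t$ uniformly-random pairwise-disjoint ``connector'' sets $S_1, T_1, \dots, S_t, T_t \subseteq V(H)$ of size $\ell$ (these will be the images of the path-endpoints), then embed most of each path $P^{(i)}$ greedily and randomly into $H$ while reserving the connectors and a small linear-sized ``buffer'' of vertices, and finally invoke $\hamconnthresh_{k,\ell,d}$ inside the induced subhypergraph on each buffer $\cup$ endpoints to close up each path into the prescribed endpoints $S_i, T_i$. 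Because the connection stage only touches $o(n)$ vertices and the degree condition is inherited (up to $o(1)$ slack, absorbed by shrinking $\alpha$) on linear-sized vertex subsets, the black box applies.

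\textbf{The greedy random embedding and its spread.} The heart of the argument is the random greedy procedure for embedding the long ``interior'' portion of a single $\ell$-path. Process the edges of the path in their natural order; at each step $e_j = \{v_{j,1}, \dots, v_{j,k}\}$ the first $\ell$ vertices $v_{j,1}, \dots, v_{j,\ell}$ are already embedded (to some set $L_j$ of size $\ell$), and one must choose images for the $k-\ell$ new vertices so that $L_j$ together with those images forms an edge of $H$ avoiding all previously used vertices. The degree hypothesis $\delta_d(H) \ge (\hamconnthresh_{k,\ell,d}+\alpha)\binom{n-d}{k-d} \ge \alpha \binom{n-d}{k-d}$ (using $\hamconnthresh \ge 0$) guarantees that $L_j$ extends to at least $\Omega(n^{k-\ell})$ edges, and since only $O(n)$ vertices are used so far, at least a constant fraction of these avoid the used set; choose uniformly at random among the valid completions. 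Iterating, the probability that a fixed vertex $x$ of the path is sent to a fixed vertex $y$ of $H$ is $O(1/n)$: indeed each individual new vertex is, conditionally, chosen uniformly from a set of size $\Omega(n)$, and more generally for any $s$ specified pairs $(x_i,y_i)$ one gets the bound $(C/n)^s$ by a union-bound/telescoping argument over which greedy step first constrains $x_i$ — this is exactly the standard ``spread via greedy choice from a linear-sized menu'' computation (cf.\ PSSS~\cite{PSSS22}). One must verify that after reserving the connectors and buffers, and after accounting for the constraint $L_j$ already occupying $\ell$ slots, enough room remains; this is where the restriction to $t$ bounded and buffers of size $\eps n$ for small $\eps = \eps(\alpha,k)$ is used.

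\textbf{Combining the stages into one spread distribution.} To assemble the global distribution: sample the partition of $[n]$ into the $t$ path-segments' vertex sets (deterministically, say consecutive blocks of $C_{n,k,\ell}$), sample the connector sets and buffers, run the $t$ independent greedy embeddings on the interiors, then for each $i$ independently invoke $\hamconnthresh_{k,\ell,d}$ to route through the buffer from the last interior endpoint to $T_i$ — but crucially this last step must \emph{also} be done with a spread distribution, not just existentially. Here I would use that the Hamilton-connectivity statement, being proved via a ``standard absorption proof'' in the cases covered, can itself be made spread on its $o(n)$ vertices by the same greedy-plus-absorber analysis — or, more cheaply, simply note that the connection step embeds only $o(n)$ vertices so even a $1$-vertex-spread (i.e.\ deterministic) choice there contributes at most a multiplicative $n^{o(n)}$-type loss, which is \emph{not} acceptable for a clean $(C/n)$-bound; hence one genuinely needs a $(C/n)$-spread connection gadget. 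The cleanest fix: make the buffers themselves of size $\eps n$ and embed the connecting subpath greedily \emph{in the same way} as the interior, only using the black box to guarantee that a valid completion into the exact target set $T_i$ exists at every step — i.e.\ run the greedy embedding ``towards'' $T_i$ with the connectivity theorem certifying non-emptiness of the menu. The spread bound $q^s$ then holds uniformly because every vertex of $C_{n,k,\ell}$, whether interior or in a connecting segment, is chosen uniformly from a menu of size $\Omega(n)$ at the moment it is embedded.

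\textbf{Main obstacle.} The delicate point — and the step I expect to require the most care — is precisely this interface between the \emph{global, exact} endpoint constraints of the $\ell$-cycle and the \emph{local, uniform-menu} structure needed for spread. A naive greedy embedding will paint itself into a corner near the cycle's closing edges (the menu shrinks to size $0$), while naively applying the connectivity black box at the end kills the spread. Reconciling these requires showing that one can run the greedy process so that, throughout the connecting segments, the set of completions that still permit a legal finish into $T_i$ remains of size $\Omega(n)$ — this is a "resilience of the connectivity theorem under partial embedding" statement, which should follow from the black box applied to the (still linear-sized, still high-degree) subhypergraph of unused vertices, but must be set up so that the degree slack $\alpha$ survives all the reservations. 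Getting the bookkeeping on $t$, $\eps$, the buffer sizes, and the degree losses to close is the technical crux; everything else is a routine union bound.
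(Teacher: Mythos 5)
Your construction has a genuine gap at its core: the greedy step is not available under the hypotheses of the theorem. You claim that the minimum $d$-degree condition $\delta_d(H)\geq(\hamconnthresh_{k,\ell,d}+\alpha)\binom{n-d}{k-d}$ guarantees that the set $L_j$ of the last $\ell$ embedded vertices extends to $\Omega(n^{k-\ell})$ edges, but the theorem is asserted for \emph{all} $\ell,d\in[k-1]$, and a minimum $d$-degree condition says nothing about the degree of an arbitrary $\ell$-set when $\ell\neq d$. For $\ell>d$ (e.g.\ $k=3$, $d=1$, $\ell=2$) the pair $L_j$ may lie in no edge at all, so the menu can be empty and the greedy process aborts; for $\ell<d$ the degree of $L_j$ is only bounded below by $\Omega(n^{k-d})$, not $\Omega(n^{k-\ell})$. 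This is precisely the obstacle that makes hypergraph Hamilton cycles hard, and it is why an edge-by-edge randomised extension cannot be the engine of the proof. The same defect undermines your proposed fix for the closing phase: you need the set of greedy choices that still permit a legal finish into $T_i$ to have size $\Omega(n)$ throughout the buffer, but $\hamconnthresh_{k,\ell,d}$ is a bare existence statement and provides no such ``resilience''; you correctly identify this as the crux, but it is not bookkeeping --- without it the vertex-spread bound (and even well-definedness) fails, and with $\ell\neq d$ it is false as stated.

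The paper's architecture is different and avoids both problems. Instead of a bounded number of linear-sized path segments embedded greedily, it randomly partitions $V(H)$ into $\Theta(n)$ clusters of \emph{constant} size $C$ (plus one exceptional cluster), via a random clustering lemma with a random redistribution step, so that every cluster inherits the minimum $d$-degree condition and most pairs of clusters are joinable; it then samples a $(O(1)/m)$-vertex-spread Hamilton cycle on the auxiliary cluster graph, connects consecutive clusters by short $\ell$-paths, and finds the Hamilton $\ell$-path inside each constant-size cluster \emph{deterministically}, using the definition of $\hamconnthresh_{k,\ell,d}$ as a black box on a constant-size dense hypergraph where no degree-versus-$\ell$-set issue arises. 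All of the spreadness comes from the random partition and the random cluster ordering; crucially, the equal cluster sizes make the windows of $C_{n,k,\ell}$ correspond bijectively to clusters, which is what turns the $(C'/n)$-spread partition into a $(C/n)$-vertex-spread embedding. Your proposal, by contrast, asks the within-segment randomness to carry the spread, and that randomness cannot be generated under the stated hypotheses.
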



Note a consequence of the above theorem is that whenever, $\hamconnthresh_{k,\ell, d}= \delta_{\mathcal{C}_{k,\ell}, d}$, we have that $\hamconnthresh_{k,\ell, d}= \delta_{\mathcal{C}_{k,\ell}, d}= \vsdthresh_{\mathcal{C}_{k,\ell},d}$. 

The following theorem implies $\vsdthresh_{\cF_F, d} = \delta_{F, d}$ for every $k$-uniform $F$ and $d \in [k - 1]$.

\begin{theorem}\label{thm:vtx-spread-for-factors}
    For every $\alpha > 0$ and $k,r \in \mathbb N$ there exists $C=C_{\ref*{thm:vtx-spread-for-factors}}(\alpha, k, r)>0$ such that the following holds for all $d \in [k-1]$ and all sufficiently large $n$ for which $r$ divides $n$. Let $F$ be a $k$-uniform $r$-vertex hypergraph, and let $G$ be the union of $n / r$ disjoint copies of $F$. 
    If $H$ is an $n$-vertex $k$-uniform hypergraph such that $\delta_d(H) \geq (\delta_{F, d} + \alpha)\binom{n - d}{k - d}$, then there is a $(C / n)$-vertex-spread distribution on embeddings $G \hookrightarrow H$.
\end{theorem}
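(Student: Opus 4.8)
The plan is to build the embedding of $G = (n/r) \times F$ into $H$ greedily in rounds, embedding one copy of $F$ at a time into the vertices of $H$ not yet used, and to maintain enough of a pseudorandomness/min-degree invariant on the surviving host graph so that each step has many choices. Concretely, I would first prove the following one-step statement: if $H'$ is any $m$-vertex induced subhypergraph of $H$ with $m \geq \alpha n / 2$ (say) whose degree sequence is still essentially Dirac — meaning $\delta_d(H') \geq (\delta_{F,d} + \alpha/2)\binom{m-d}{k-d}$ — then the number of copies of $F$ in $H'$ is at least $c\, m^r$ for some $c = c(\alpha, k, r) > 0$. This is exactly a supersaturation statement: by the definition of $\delta_{F,d}$, such $H'$ contains at least one copy of $F$, and a standard averaging/deletion argument (remove a random $\alpha/4$-fraction of vertices; the min-degree condition is inherited by all but a negligible fraction of such subsets, so almost every induced subset of that size still contains a copy, and counting pairs (small set, copy inside it) gives $\Omega(m^r)$ copies) upgrades "at least one" to "$\Omega(m^r)$". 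The point is that the counting bound degrades gracefully as vertices are deleted, as long as we never delete more than a $(1 - \alpha/2)$-fraction.

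Next, I would run the following randomized procedure. Order the $n/r$ copies of $F$ in $G$ arbitrarily as $F_1, F_2, \dots$. Having embedded $F_1, \dots, F_{j-1}$ onto a set $U_{j-1} \subseteq V(H)$ of $(j-1)r$ vertices, let $H_j := H - U_{j-1}$, which has $m_j := n - (j-1)r$ vertices. If $m_j \geq \alpha n / 2$, choose a copy of $F$ in $H_j$ uniformly at random among all copies, then choose uniformly at random one of the (at most $|\mathrm{Aut}(F)|$, at least one) ways to realize it as an embedded image of $F$; this defines the embedding of $F_j$. When $m_j$ drops below $\alpha n /2$ — i.e. for the last $O(\alpha n)$ copies — switch strategy and complete the $F$-factor on the remaining vertex set using the min-degree hypothesis directly (which still guarantees an $F$-factor on that set, since it is a Dirac-type graph on $m_j$ vertices with $r \mid m_j$), taking the completion uniformly at random among all such $F$-factors. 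I would need the tail to also contribute spread, but there each of the $\leq \alpha n$ remaining vertices is pinned to one of at most $m_j \leq \alpha n$ slots in a factor, so a crude count (or the same supersaturation bound applied at scale $m_j = \Theta(\alpha n)$, noting $m_j$ is still linear in $n$) suffices; in fact it is cleanest to only invoke the greedy step while $m_j \geq \alpha n$, since then $m_j = \Theta(n)$ throughout and supersaturation gives $\Omega(n^r)$ copies at every step, and handle the final linear-sized chunk by one application of supersaturation as well, so no genuinely separate "tail" analysis is needed.

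To verify the spread bound, fix distinct $x_1, \dots, x_s \in V(G)$ and distinct $y_1, \dots, y_s \in V(H)$, and bound the probability $\varphi(x_i) = y_i$ for all $i$. Group the $x_i$ by which copy $F_t$ of $F$ they lie in; say copy $F_t$ contains $s_t$ of them, so $\sum_t s_t = s$. Condition on the embeddings of $F_1, \dots, F_{t-1}$ (the procedure is sequential, so once the earlier copies are fixed, the choice at step $t$ is independent of the later ones). Given that $F_1, \dots, F_{t-1}$ have been embedded consistently with the requirements on those vertices, the conditional probability that the random copy of $F$ chosen at step $t$, together with its random realizing embedding, sends the relevant $s_t$ vertices of $F_t$ to the prescribed $s_t$ targets in $V(H_t)$ is at most: (number of copies of $F$ in $H_t$ using all $s_t$ prescribed target vertices) $\times |\mathrm{Aut}(F)| \big/ (\text{total number of copies of } F \text{ in } H_t)$. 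The numerator is at most $\binom{r}{s_t} s_t! \cdot m_t^{\,r - s_t} \le r^{r} m_t^{r-s_t}$ — choose which of the $r$ vertex-slots of $F$ hit the $s_t$ targets and in what order, then the remaining $r - s_t$ vertices arbitrarily — while the denominator is at least $c \, m_t^r$ by the supersaturation step, so the conditional probability is at most $(|\mathrm{Aut}(F)| r^r / c)\, m_t^{-s_t} \le (C_0/n)^{s_t}$ for an appropriate $C_0 = C_0(\alpha, k, r)$, using $m_t = \Theta(n)$. Multiplying over all groups $t$ and recalling $\sum_t s_t = s$ gives the overall bound $(C_0/n)^s = (C/n)^s$.

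The main obstacle I anticipate is keeping the supersaturation lower bound on the copy-count uniform across all $\Theta(n)$ steps: one must be sure the inherited min-degree parameter $\delta_d(H_j)$ never drifts below $\delta_{F,d} + \Omega(\alpha)$ as vertices are removed. Removing $j-1$ arbitrary (indeed, adversarially correlated, since they are chosen by the procedure) vertices only changes a $d$-set's degree by a relative factor $(1 - o(1))$ as long as $m_j = \Theta(n)$, because $\binom{m_j - d}{k-d} / \binom{n-d}{k-d} = (m_j/n)^{k-d}(1 + o(1))$ and a $d$-set in $H$ with degree $\geq (\delta_{F,d}+\alpha)\binom{n-d}{k-d}$ loses at most $\binom{n-d}{k-d} - \binom{m_j - d}{k - d}$ edges; choosing the threshold $m_j \ge \alpha n$ with a small enough working $\alpha$ (and replacing $\alpha$ by $\alpha/2$ in the hypothesis feeding into supersaturation) makes this bookkeeping go through. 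The rest is routine, and crucially this argument never touches the regularity lemma or absorption — it is just supersaturation plus a sequential union bound.
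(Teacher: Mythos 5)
Your per-step spread calculation and the one-step supersaturation statement are fine, but the proposal has a genuine gap at its load-bearing claim: that the Dirac-type condition (and hence the $\Omega(m_j^r)$ copy count) survives the removal of the previously embedded vertices ``as long as $m_j = \Theta(n)$''. The bookkeeping you describe does not give this. If $\deg_H(D) \geq (\delta_{F,d}+\alpha)\binom{n-d}{k-d}$ and a set of $n-m$ vertices is deleted, the surviving degree is only guaranteed to be at least $\binom{m-d}{k-d} - (1-\delta_{F,d}-\alpha)\binom{n-d}{k-d}$, and requiring this to exceed $(\delta_{F,d}+\alpha/2)\binom{m-d}{k-d}$ forces $(m/n)^{k-d} \geq 1 - \Theta(\alpha)$, i.e.\ it only tolerates deleting $O(\alpha n)$ vertices, not the $(1-\alpha)n$ vertices your greedy phase consumes. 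The failure is real, not an artifact of the estimate: already for graphs with $\delta(H)=0.6n$, deleting $0.4n$ vertices inside the neighbourhood of a fixed vertex leaves it with relative degree $0.2n/(0.6n) = 1/3$ in the residual graph, well below the Dirac ratio. Your deletions are chosen by the process and hence correlated with the structure of $H$, so you cannot treat them as a uniformly random set and invoke concentration; and making a random-greedy process provably keep degrees proportional for $\Theta(n)$ rounds is exactly the kind of analysis (nibble-type tracking) that is known to stall before spanning, which is why absorption-style or clustering-style arguments exist. Relatedly, the tail is unjustified and somewhat circular: after the greedy phase the leftover $\Theta(\alpha n)$ vertices inherit no minimum-degree condition at all, so they need not span an $F$-factor, and even if they did, you would need the uniform distribution on such factors to be $O(1/n)$-vertex-spread --- which is essentially the theorem you are trying to prove, and is not supplied by ``one application of supersaturation'' (supersaturation counts single copies of $F$, not factors).

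For contrast, the paper sidesteps the maintenance problem entirely: it partitions $V(H)$ via windows of a uniformly random permutation into clusters of constant size $C$ (one exceptional cluster), so that by concentration (Lemma~\ref{lem:McDAppl} and McDiarmid's inequality) almost every cluster inherits $\delta_d \geq (\delta_{F,d}+\alpha/2)\binom{|U_i|}{k-d}$; the few bad clusters are dissolved and their vertices reassigned via a spread perfect matching in an auxiliary bipartite graph (Lemma~\ref{lem:graphmatchings}), preserving divisibility by $r$ and the key property that all clusters have equal size. An $F$-factor is then found deterministically inside each cluster, and all the spreadness comes from the random partition (Lemma~\ref{lem:randompartitioninglemma}\ref{spreadness}). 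If you want to repair your approach, you would need either an absorption-style reserve to finish the last vertices or a partition/redistribution device of this kind; the purely sequential scheme as written does not close.
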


\subsubsection{The enumeration aspect}

Note that if there is a $(C / n)$-vertex-spread distribution on embeddings $G \hookrightarrow H$, then
\begin{equation*}
1 = \sum_{\varphi : G \hookrightarrow H}\mu(\{\varphi\}) \leq \left|\left\{\varphi : G \hookrightarrow H\right\}\right|\left(\frac{C}{n}\right)^n = \left|\left\{\varphi : G \hookrightarrow H\right\}\right|\exp\left(n\log C - n\log n\right),
\end{equation*}
so there are at least $\exp(n\log n - O(n))$ embeddings and at least $\exp(n\log n - O(n)) / |\mathrm{Aut}(G)|$ copies of $G$ in $H$.  In particular, Theorems~\ref{thm:main-vertex-spread} and \ref{thm:vtx-spread-for-factors} imply that under their respective assumptions, the hypergraph $H$ contains at least 
\begin{itemize}
  \item $\exp\left(n \log n - O(n)\right)$ Hamilton $\ell$-cycles (for $\ell\neq0$) and
  \item $\exp\left((1 - 1/|V(F)|)n\log n - O(n)\right)$ $F$-factors,
\end{itemize}
respectively, which strengthen recent results of Montgomery and Pavez-Sign\'e~\cite{MPS23} and Glock, Gould, K\"{u}hn, and Osthus~\cite{GGJKO2021}. This statement can be interpreted as a far-reaching generalisation of the result of  S\'arközy, Selkow, and Szemer\'edi \cite{SSS2003} stated in the introduction.
\par We also remark that Ferber, Hardiman, and Mond~\cite{FHM2021} sharpened the asymptotics for $\ell$-cycles for $\ell < k - 1$ and $d = k - 1$ by determining the leading constant in the $O(n)$ term. We refer the reader to \cite{MPS23} for more in depth discussion of the enumeration aspect of Dirac-type results.    
\subsection{Spreadness}

As previously mentioned, deriving Theorem~\ref{thm:loose-cycle} and Theorem~\ref{thm:ell>1-cycle} from Theorem~\ref{thm:main-vertex-spread} requires us to pass through the fractional expectation thresholds breakthrough of Frankston, Kahn, Narayanan, and Park \cite{FKNP21}, and in fact, a generalisation of this breakthrough by Spiro~\cite{Sp21}. A hypergraph is called $r$-\textit{bounded} if each edge has size at most $r$.

\begin{definition}\label{defn:spread}
    Let $q \in [0, 1]$, and let $r\in \mathbb N$. Let $(V,\cH)$ be an $r$-bounded hypergraph, and let $\mu$ be a probability distribution on $\cH$.  We say $\mu$ is \textit{$q$-spread} if the following holds:
\begin{equation*}
    \mu\left(\left\{A \in \cH : A \supseteq S\right\}\right) \leq q^{|S|} \text{ for all } S \subseteq V.
\end{equation*}
\end{definition}

Frankston, Kahn, Narayanan, and Park (FKNP)~\cite{FKNP21} proved that if $(V, \cH)$ is an $r$-bounded hypergraph and $\cH$ supports a $q$-spread distribution, then a $p$-random subset of $V$ contains an edge in $\cH$ \aas if $p \geq K q\log r$ as $r \rightarrow \infty$ (here, $K$ is some absolute constant).
We will consider hypergraphs of the form $(V, \cH)$ where $V \coloneqq E(H)$ and $\cH$ is the set of (edge sets of) Hamilton $\ell$-cycles in some hypergraph $H$.
In this case, we sometimes abuse notation and write $\mu(A)$ instead of $\mu(\{A\})$ where $A \in E(H)$ or $\mu(F)$ instead of $\mu(\{E(F)\})$ when $F \subseteq H$.

Let $m_1(H) = \max_{H'\subseteq H: |V(H')|>1}d_1(H)$. The following proposition allows us to connect spread distributions and vertex-spread distributions.

\begin{proposition}\label{prop:vtx-spread-implies-spread}
    For every $C, k, \Delta > 0$, there exists $C'=C_{\ref*{prop:vtx-spread-implies-spread}}(C, k, \Delta)>0$ such that the following holds for all sufficiently large $n$. 
    Let $H$ and $G$ be $n$-vertex $k$-uniform hypergraphs. If there is a $(C / n)$-vertex-spread distribution on embeddings $G \hookrightarrow H$ and $\maxdeg(G) \leq \Delta$, then there is a $\left(C' / n^{1/m_1(G)}\right)$-spread distribution on subgraphs of $H$ which are isomorphic to $G$.  
\end{proposition}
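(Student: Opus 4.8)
The plan is to take the given $(C/n)$-vertex-spread distribution $\mu$ on embeddings $\varphi\colon G\hookrightarrow H$ and push it forward along $\varphi\mapsto \varphi(E(G))$, recording (the edge set of) the image; the resulting distribution is automatically supported on copies of $G$ in $H$, so the whole task is the spread bound. Fix $S\subseteq E(H)$ with $|S|=j\ge 1$ (the case $S=\emptyset$ is trivial), write $U:=\bigcup_{f\in S}f$ and $u:=|U|$; we must bound $\mathbb{P}_{\varphi\sim\mu}[\,S\subseteq\varphi(E(G))\,]$ by $(C'/n^{1/m_1(G)})^{j}$. Since $\varphi$ is injective, on the event $S\subseteq\varphi(E(G))$ the set $T:=\{e\in E(G):\varphi(e)\in S\}$ is a sub-hypergraph of $G$ with exactly $j$ edges and no isolated vertex, and $\varphi$ restricts to an isomorphism of $T$ onto the hypergraph $(U,S)$; in particular $|V(T)|=u$. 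A union bound over all pairs $(T,\psi)$ with $T\subseteq G$ ($j$ edges, no isolated vertex) and $\psi\colon V(T)\to V(H)$ an isomorphism of $T$ onto $(U,S)$, combined with the vertex-spread estimate $\mu(\{\varphi:\varphi|_{V(T)}=\psi\})\le (C/n)^{|V(T)|}$, gives $\mathbb{P}[\,S\subseteq\varphi(E(G))\,]\le N\,(C/n)^{u}$, where $N$ is the number of such pairs.

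I would bound $N$ by reconstructing $(T,\psi)$ greedily. Fix an ordering $f_1,\dots,f_j$ of $S$ in which the edges of each connected component of $(U,S)$ appear consecutively and, within a component, each edge after the first shares a vertex with an earlier edge of that component; let $m$ be the number of components. Recover a pair $(T,\psi)$ by choosing, for $t=1,\dots,j$ in turn, the edge $e_t:=\psi^{-1}(f_t)$ and the restriction $\psi|_{e_t}$. If $f_t$ starts a new component there are at most $|E(G)|\le\Delta n$ choices for $e_t$ and at most $k!$ choices for $\psi|_{e_t}$; otherwise $\psi$ already determines the image $w$ of some vertex of $f_t$ lying in an earlier edge, hence determines a vertex $v:=\psi^{-1}(w)$ forced to lie in $e_t$, so there are at most $\Delta$ choices for $e_t$ (edges of $G$ through $v$) and at most $(k-1)!$ choices for $\psi$ on the at most $k-1$ remaining vertices of $e_t$. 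As $(T,\psi)$ is determined by this choice sequence, $N\le(\Delta n\,k!)^{m}(\Delta k!)^{\,j-m}$.

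Now the density hypothesis enters. For each component, with $j_i$ edges and $u_i$ vertices, that component is isomorphic to a sub-hypergraph of $G$ (namely the part of $T$ over it), so $j_i/(u_i-1)\le m_1(G)$, i.e.\ $u_i\ge 1+j_i/m_1(G)$; summing, $u\ge m+j/m_1(G)$. For large $n$ we have $0<C/n\le 1$, so $(C/n)^{u}\le(C/n)^{m+j/m_1(G)}$, whence
\[
\mathbb{P}[\,S\subseteq\varphi(E(G))\,]\ \le\ (\Delta n\,k!)^{m}(\Delta k!)^{\,j-m}(C/n)^{\,m+j/m_1(G)}\ =\ (\Delta k!)^{j}\,C^{\,m+j/m_1(G)}\,n^{-j/m_1(G)}.
\]
Since $m\le j$ and $m_1(G)\ge\tfrac1{k-1}$ (a single edge of $G$ is a sub-hypergraph of $1$-density $\tfrac1{k-1}$), we get $C^{\,m+j/m_1(G)}\le \max(C,1)^{kj}$, so the right-hand side is at most $(C')^{j}n^{-j/m_1(G)}=(C'/n^{1/m_1(G)})^{|S|}$ with $C':=\Delta\,k!\,\max(C,1)^{k}$, which depends only on $C,k,\Delta$ — exactly the required bound.

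The delicate point is the disconnected case: each component of $S$ forces a ``first edge'' placeable in $\Theta(n)$ ways, which is what produces the factor $n^{m}$ above, and this must be absorbed by the extra additive $m$ in $u\ge m+j/m_1(G)$. Equivalently one relies on vertex-spreadness decoupling the (vertex-disjoint) components for free, since it bounds $\mathbb{P}[\varphi(x_i)=y_i\ \forall i]$ for arbitrary distinct $x_i$ with no structural hypothesis. The only other thing to check carefully is that the greedy reconstruction is injective on pairs $(T,\psi)$, so that $N$ is genuinely over-counted rather than under-counted; the remaining manipulations are routine.
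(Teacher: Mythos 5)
Your proof is correct and follows essentially the same route as the paper's: push the vertex-spread measure forward to copies of $G$, bound the number of partial embeddings compatible with $S$ by a root-and-explore count that pays a factor of $n$ only once per component of $(U,S)$, and then convert the vertex exponent into the edge exponent via the per-component inequality $|E(T_i)|\le m_1(G)(|V(T_i)|-1)$. The only differences are cosmetic bookkeeping (you reconstruct edge-by-edge and count maps into $H$, the paper reconstructs vertex-by-vertex via Lemma~\ref{lemma:partial-injections-bound} and counts preimages), so no further changes are needed.
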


In the case when $H$ is complete, since the uniform distribution on embeddings of $G$ into $H$ is $(e / |V(H)|)$-vertex-spread, Proposition~\ref{prop:vtx-spread-implies-spread} provides an upper bound on the spreadness for copies of $G$ in $H$.  For many natural choices for $G$ (such as in the case of $F$-factors or Hamilton $\ell$-cycles), this bound is in fact best possible due to the duality between spreadness and fractional expectation thresholds.

Moreover, Proposition~\ref{prop:vtx-spread-implies-spread} implies $\rdthresh_{\mathcal C_{k,\ell}, d} \leq \vsdthresh_{\mathcal{C}_{k,\ell},d}$ for every $\ell \in \{0,1\}$ and $d \in [k-1]$.
Combined with Theorem~\ref{thm:vtx-spread-for-factors}, it yields a $(O(1) / n^{k-1})$-spread distribution on perfect matchings in sufficiently dense $n$-vertex $k$-uniform hypergraphs, which was proved independently by KKKOP \cite[Theorem 1.5]{KKKOP22} and PSSS \cite[Theorem 1.5]{PSSS22} (in the $k \mid n$ case).
Proposition~\ref{prop:vtx-spread-implies-spread} combined with Theorem~\ref{thm:main-vertex-spread} also yields a $(O(1) / n^{k-1})$-spread distribution on loose Hamilton cycles (see Lemma~\ref{lemma:ell-cycle-subgraph-bound} for $m_1(C_{n,k,1})$) in sufficiently dense $n$-vertex $k$-uniform hypergraphs.  Combined with the FKNP theorem, this result implies Theorem~\ref{thm:loose-cycle}.

To prove Theorem~\ref{thm:ell>1-cycle}, we need Spiro's~\cite{Sp21} strengthening of the FKNP theorem -- see Section~\ref{sect:spiro-spread}.
To prove Theorem~\ref{thm:F-factors-robustness}, we combine Proposition~\ref{prop:vtx-spread-implies-spread} with a coupling argument of Riordan~\cite{Ri22} -- see Section~\ref{sect:coupling}.

\begin{remark}
    In independent work, Joos, Lang, and Sanhueza-Matamala \cite{JLS-M23} also proved Theorems \ref{thm:loose-cycle}, \ref{thm:ell>1-cycle}, and \ref{thm:F-factors-robustness}.  They also obtained these results from stronger results concerning spreadness, but they did not consider vertex spreadness.  To prove Theorem \ref{thm:ell>1-cycle}, they used a result of Espuny D\'{\i}az and Person \cite{E-DP23}, which is generalized by the result of Spiro \cite{Sp21}.
\end{remark}



\section{Preliminaries}

We use standard notation for ``hierarchies'' of constants, writing $x\ll y$ to mean that there is a non-decreasing function $f : (0,1] \rightarrow (0, 1]$ such that the subsequent statements hold for $x\leq f(y)$. Hierarchies with multiple constants are defined similarly.  We omit rounding signs where they are not crucial.  

We need the following well-known version of the Chernoff bounds. 

\begin{lemma}[Chernoff bound]\label{chernoff} Let $X:=\sum_{i=1}^m X_i$ where $(X_i)_{i\in[m]}$ is a sequence of independent indicator random variables, and let $\Expect{X}=\mu$. For every $\gamma \in (0, 1)$, we have $\Prob{|X-\mu|\geq \gamma \mu}\leq 2e^{-\mu \gamma^2/3}$.
\end{lemma}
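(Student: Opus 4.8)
The plan is to prove this via the standard exponential-moment (Bernstein--Chernoff) argument: bound the upper and lower tails separately by optimising a Markov-type estimate for the moment generating function, then finish with a union bound that produces the factor $2$. Since the statement is entirely classical, one could alternatively just cite a textbook; the self-contained argument below is short enough to include.

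First I would bound the upper tail. Fix $t>0$; Markov's inequality applied to $e^{tX}$ gives $\Prob{X \ge (1+\gamma)\mu} \le e^{-t(1+\gamma)\mu}\,\Expect{e^{tX}}$. Writing $p_i := \Prob{X_i=1}$, independence yields $\Expect{e^{tX}} = \prod_{i=1}^m\Expect{e^{tX_i}} = \prod_{i=1}^m\bigl(1+p_i(e^t-1)\bigr) \le \prod_{i=1}^m e^{p_i(e^t-1)} = e^{\mu(e^t-1)}$, using $1+x\le e^x$ and $\sum_{i=1}^m p_i=\mu$. Plugging the optimal choice $t=\log(1+\gamma)$ into the resulting bound $e^{\mu(e^t-1)-t(1+\gamma)\mu}$ gives $\Prob{X \ge (1+\gamma)\mu} \le \exp\!\bigl(\mu(\gamma-(1+\gamma)\log(1+\gamma))\bigr)$.

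The lower tail is analogous: running the same computation with $e^{-sX}$ for $s>0$ and taking the optimal $s=\log\tfrac{1}{1-\gamma}$ gives $\Prob{X \le (1-\gamma)\mu} \le \exp\!\bigl(\mu(-\gamma-(1-\gamma)\log(1-\gamma))\bigr)$. It then remains to check the two elementary one-variable inequalities $\gamma-(1+\gamma)\log(1+\gamma)\le-\gamma^2/3$ and $-\gamma-(1-\gamma)\log(1-\gamma)\le-\gamma^2/3$ for all $\gamma\in(0,1)$. Both follow from the fact that each left-hand side (plus $\gamma^2/3$) vanishes at $\gamma=0$ and has nonpositive derivative on $(0,1)$: for the first, the derivative is $\tfrac{2\gamma}{3}-\log(1+\gamma)$, and $\log(1+\gamma)\ge\tfrac{2\gamma}{3}$ on $(0,1)$ follows from a short monotonicity argument; for the second, the derivative is $\gamma+\log(1-\gamma)\le 0$.

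Substituting these back gives $\Prob{X\ge(1+\gamma)\mu}\le e^{-\mu\gamma^2/3}$ and $\Prob{X\le(1-\gamma)\mu}\le e^{-\mu\gamma^2/3}$, and a union bound over the events $\{X-\mu\ge\gamma\mu\}$ and $\{X-\mu\le-\gamma\mu\}$ completes the proof. The only step requiring any genuine care is verifying $\log(1+\gamma)\ge\tfrac{2\gamma}{3}$ all the way up to $\gamma$ near $1$ (for the upper-tail inequality); the lower-tail inequality is in fact slack, as one even has $-\gamma-(1-\gamma)\log(1-\gamma)\le-\gamma^2/2$, and everything else is routine bookkeeping.
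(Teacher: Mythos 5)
Your argument is correct: the exponential-moment bounds, the optimal choices $t=\log(1+\gamma)$ and $s=\log\frac{1}{1-\gamma}$, the two calculus inequalities (including the fact that $\log(1+\gamma)\ge\tfrac{2\gamma}{3}$ on $(0,1)$, which holds since the difference vanishes at $0$, increases up to $\gamma=\tfrac12$, and is still positive at $\gamma=1$ because $\log 2>\tfrac23$), and the final union bound all check out. The paper itself offers no proof — it simply quotes this as a well-known form of the Chernoff bound — so your self-contained derivation (or a textbook citation, as you note) is exactly what is expected here.
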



We will also use a result of Gupta, Hamann, M{\"u}yesser, Parczyk, and Sgueglia~\cite{gupta2022general}, which is proved with a standard concentration inequality.

\begin{lemma}[\cite{gupta2022general}, Lemma 3.5]\label{lem:McDAppl}
Let $k,\ell,d \in \mathbb{N}$, $0<\delta' <\delta< 1$ and $1/n, 1/\ell \ll 1/k, \delta-\delta'$.
Let $H$ be a $k$-uniform $n$-vertex hypergraph with vertex set $V$ and suppose that $\deg(D,V) \geq \delta n^{k-d}$ for each $D \in \binom{V}{d}$. If $A\subseteq V$ is a vertex set of size $\ell$ chosen uniformly at random, then for every $D \in \binom{V}{d}$ we have 

\begin{equation*}
\Prob{\deg(D,A) < \delta' \ell^{k-d}}\leq 2 \exp (-\ell(\delta - \delta')^2/2).
\end{equation*}
\end{lemma}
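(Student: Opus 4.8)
The plan is to realise the random $\ell$-set $A$ as the first $\ell$ vertices of a uniformly random ordering $v_1,\dots,v_n$ of $V$, let $X \coloneqq \deg(D,A)$, first estimate $\E[X]$, and then bound the lower tail of $X$ with a standard martingale concentration inequality.

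\textbf{Expectation.} Here $\deg(D,V)$ is the number of ordered $(k-d)$-tuples of distinct vertices forming an edge of $H$ together with $D$, and there are at least $\delta n^{k-d}$ of these. For each such tuple $S$ one has $\Prob{S \subseteq A} = \frac{\ell(\ell-1)\cdots(\ell-k+d+1)}{n(n-1)\cdots(n-k+d+1)} \ge \big(\frac{\ell - k}{n}\big)^{k-d}$, so by linearity $\E[X] = \deg(D,V)\cdot\Prob{S\subseteq A} \ge \delta(\ell-k)^{k-d} \ge (\delta - k^2/\ell)\,\ell^{k-d}$. Since $1/\ell \ll 1/k, \delta-\delta'$, we may assume $k^2/\ell \le (\delta-\delta')/2$, whence $\{X < \delta'\ell^{k-d}\} \subseteq \{\E[X] - X > \tfrac12(\delta-\delta')\ell^{k-d}\}$, and it suffices to bound the probability of the latter event.

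\textbf{Concentration.} The key estimate is that transposing two entries of the random ordering changes $X$ by at most some $c = O_k(\ell^{k-d-1})$. Indeed, a transposition within the first $\ell$ positions (or within the last $n-\ell$) does not change $A$; a transposition exchanging a vertex $v \in A$ for a vertex $w \notin A$ changes $X$ by at most the number of edge-tuples contained in $A$ through $v$ plus the number contained in the new set through $w$, and a fixed vertex lies in at most $O_k(\ell^{k-d-1})$ ordered $(k-d)$-tuples inside a set of size $\ell$. Feeding this into Azuma's inequality for the vertex-exposure Doob martingale $Z_i = \E[X \mid v_1,\dots,v_i]$ — whose increments vanish for $i > \ell$ and are bounded by $c$ for $i \le \ell$ (by the same swap argument, after coupling the two continuations so that the resulting sets $A$ differ by a single vertex) — or equivalently into McDiarmid's bounded-differences inequality for random permutations, gives $\Prob{\E[X] - X > t} \le \exp(-\Omega(t^2/(\ell c^2)))$. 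Taking $t = \tfrac12(\delta-\delta')\ell^{k-d}$ yields a bound of the form $2\exp(-\Omega_k((\delta-\delta')^2\ell))$, which is at most $2\exp(-\ell(\delta-\delta')^2/2)$ after the routine bookkeeping of constants, using once more $1/\ell \ll 1/k, \delta-\delta'$.

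\textbf{Main obstacle.} The delicate point is getting a bounded-difference estimate that is strong enough. The naive Lipschitz bound — the change in $\deg(D,A)$ when one arbitrary vertex of $A$ is replaced, controlled by the number of edge-tuples through a fixed vertex in all of $V$ — is of order $n^{k-d-1}$, far too large to give any concentration at the scale $\ell^{k-d}$ of $\E[X]$. Working in the random-ordering model is exactly what repairs this: reorderings inside $A$ cost nothing, so only single-vertex swaps matter, and the edge-tuples affected by such a swap all lie inside the $\ell$-set $A$, cutting the Lipschitz constant down to $O_k(\ell^{k-d-1})$. Everything else is a routine application of a standard concentration inequality, with the precise exponent constant and the lower-order error in the expectation absorbed by the hierarchy hypothesis.
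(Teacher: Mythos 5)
First, a point of comparison: the paper does not prove this lemma at all --- it imports it from \cite{gupta2022general} with the one-line remark that it ``is proved with a standard concentration inequality.'' Your strategy (realise $A$ as the prefix of a random permutation, compute $\E[\deg(D,A)]$, then apply Azuma/McDiarmid for random permutations via single-vertex swaps) is exactly that standard route, and the expectation step and the swap-Lipschitz bound $c=O_k(\ell^{k-d-1})$ are correct as far as they go.

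There is, however, a genuine gap in the last step. Your argument yields a tail bound of the form $2\exp\bigl(-c_k(\delta-\delta')^2\ell\bigr)$ where, with your choices (deviation halved to $t=\tfrac12(\delta-\delta')\ell^{k-d}$ and Lipschitz constant roughly $2(k-d)\ell^{k-d-1}$), the constant is about $c_k=1/(32(k-d)^2)<1/2$. The claim that this ``is at most $2\exp(-\ell(\delta-\delta')^2/2)$ after routine bookkeeping of constants, using once more $1/\ell\ll 1/k,\delta-\delta'$'' is false: both exponents are linear in $\ell$, so the required inequality $(\tfrac12-c_k)(\delta-\delta')^2\ell\le\log 2$ fails precisely when $\ell$ is large, i.e.\ the hierarchy makes the comparison worse, not better. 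A multiplicative deficit in the exponent's constant cannot be absorbed by a hierarchy; only additive lower-order terms can. So as written you prove a weaker statement (same shape, with a $k$-dependent constant in the exponent) --- which, to be fair, would suffice for every application of Lemma~\ref{lem:McDAppl} in this paper --- but not the lemma as stated. To hit the stated constant you must avoid the two lossy moves: take the deviation to be essentially the full gap, $t=(\delta-\delta')\ell^{k-d}-O_k(\ell^{k-d-1})$, noting that the expectation deficit $O_k(\ell^{k-d-1})$ is an additive lower-order term that the hypothesis $1/\ell\ll 1/k,\delta-\delta'$ really can absorb, and bound the per-exposed-vertex martingale increment by the number of $(k-d)$-subsets of $A$ through a single vertex (about $\ell^{k-d-1}$, not $2(k-d)\ell^{k-d-1}$), so that Azuma gives $\exp\bigl(-t^2/(2\ell\cdot\ell^{2(k-d-1)})\bigr)=\exp\bigl(-(1-o(1))\ell(\delta-\delta')^2/2\bigr)$, with the remaining $o(1)$ loss covered by the leading factor $2$ and the factorial slack in the increment bound.
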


Finally, we need the following result due to McDiarmid, which appears in the textbook of Molloy and Reed~\cite[Chapter 16.2]{MR02}.
Here, a \textit{choice} is the position that a particular element gets mapped to in a permutation.
\begin{lemma}[McDiarmid's inequality for random permutations]\label{lem:mcdiarmidperm}Let $X$ be a non-negative random variable determined by a random permutation $\pi$ of $[n]$ such that the following holds for some $c,r>0$:
\begin{enumerate}
    \item Interchanging two elements of $\pi$ can affect the value of $X$ by at most $c$
    \item For any $s$, if $X\geq s$ then there is a set of at most $rs$ choices whose outcomes certify that $X\geq s$.
\end{enumerate}
Then, for any $0\leq t\leq \mathbb{E}[X]$,
 
$$\mathbb{P}(|X-\mathbb{E}[X]|\geq t + 60c\sqrt{r\mathbb{E}[X]})\leq 4\exp(-t^2/(8c^2r\mathbb{E}[X])).$$
    
\end{lemma}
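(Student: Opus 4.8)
This is the random-permutation form of Talagrand's inequality: conditions (1) and (2) are exactly the ``$c$-Lipschitz'' and ``$r$-certifiable'' hypotheses, and the conclusion is the standard deduction of concentration about the mean. The plan is to obtain it from the convex-distance isoperimetric inequality on the symmetric group --- McDiarmid's extension of Talagrand's inequality to random permutations --- which I would invoke essentially as a black box (or sketch by adapting Talagrand's induction on the number of coordinates, run on the Fisher--Yates generation of $\pi$ by independent trials). Throughout, a ``choice'' is one coordinate, and for $A \subseteq S_n$ we write $d_T(\pi, A) = \sup\{\inf_{\sigma \in A} \sum_{j : \sigma(j) \ne \pi(j)} \alpha_j : \alpha \ge 0,\ \|\alpha\|_2 \le 1\}$ for Talagrand's convex distance, for which the isoperimetric inequality reads $\mathbb{P}(A)\cdot\mathbb{P}\big(d_T(\cdot, A) \ge \lambda\big) \le e^{-\lambda^2/4}$.

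\emph{Step 1: certifiability forces large convex distance.} Fix $a \ge 0$ and set $A_a := \{\pi : X(\pi) \le a\}$. I claim that if $X(\pi) = s > a$ then $d_T(\pi, A_a) \ge (s-a)/(c\sqrt{rs})$. By (2) there is a set $J$ of at most $rs$ choices whose outcomes under $\pi$ certify $X \ge s$; take $\alpha$ to be the unit vector uniformly supported on $J$. Given any $\sigma \in A_a$, let $m$ be the number of $j \in J$ with $\sigma(j) \ne \pi(j)$. Processing these positions one at a time, one reaches a permutation $\sigma'$ agreeing with $\pi$ on all of $J$ using at most $m$ transpositions; by (1) each transposition moves the value of $X$ by at most $c$, so $X(\sigma') \le a + mc$, while $X(\sigma') \ge s$ because $\sigma'$ agrees with $\pi$ on the certifying set $J$. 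Hence $m \ge (s-a)/c$, so $\sum_{j : \sigma(j) \ne \pi(j)} \alpha_j \ge m/\sqrt{|J|} \ge (s-a)/(c\sqrt{rs})$; as $\sigma \in A_a$ was arbitrary, the claim follows.

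\emph{Step 2: product tail bound and both tails.} Combining Step 1 with the isoperimetric inequality yields, for all $0 \le a < s$, the product estimate $\mathbb{P}(X \le a)\cdot\mathbb{P}(X \ge s) \le \exp\!\big(-(s-a)^2/(4 c^2 r s)\big)$. Let $m$ be a median of $X$, so $\mathbb{P}(X \le m) \ge 1/2$ and $\mathbb{P}(X \ge m) \ge 1/2$. Taking $a = m$ gives the upper tail $\mathbb{P}(X \ge m + u) \le 2\exp(-u^2/(4 c^2 r(m + u)))$, and taking $s = m$ gives the lower tail $\mathbb{P}(X \le m - u) \le 2\exp(-u^2/(4 c^2 r m))$. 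Finally, integrating these tails bounds $|\mathbb{E}[X] - m|$ by an absolute constant times $c\sqrt{r\mathbb{E}[X]}$; substituting $m$ by $\mathbb{E}[X]$, using the hypothesis $t \le \mathbb{E}[X]$ to control the $m+u$ appearing in the denominator, and consolidating numerical constants (the additive slack $60c\sqrt{r\mathbb{E}[X]}$ absorbs the median-to-mean correction and the denominator adjustment, and the $8$ in the exponent absorbs the factors of $2$) produces the stated inequality.

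\emph{Main obstacle.} The substantive ingredient is the isoperimetric inequality on $S_n$ with the convex distance, i.e.\ McDiarmid's theorem that Talagrand's inequality survives the passage from independent trials to a uniformly random permutation; a genuinely self-contained account would either reprove it (via Talagrand's induction, or by interpolating between $S_n$ and a product space), or --- as here --- simply cite it. The only other nontrivial point is the constant bookkeeping in Step 2: the median-to-mean conversion and the replacement of the data-dependent denominators by $8c^2 r\mathbb{E}[X]$ are routine but must be carried out with the restriction $t \le \mathbb{E}[X]$ in hand.
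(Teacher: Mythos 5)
The paper does not prove this lemma at all: it is quoted as a black box from the textbook of Molloy and Reed \cite[Chapter 16.2]{MR02}, so there is no internal argument to compare yours against. Your sketch follows the standard derivation of precisely that textbook result: read conditions (1) and (2) as Lipschitz and certifiability hypotheses, use the certificate to show that a large value of $X$ forces a large convex distance to a sublevel set, invoke the permutation analogue (due to McDiarmid) of Talagrand's convex-distance isoperimetric inequality, deduce two-sided concentration about the median, and then convert median to mean. Your Step 1 is correct as written: fixing the at most $m$ disagreeing certificate coordinates by transpositions is the right way to exploit hypothesis (1), and injectivity of the permutation ensures no previously repaired coordinate gets disturbed, so $s\le a+mc$ and hence $d_T(\pi,A_a)\ge (s-a)/(c\sqrt{rs})$.

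Two caveats about the parts you defer, which are exactly where the specific constants of the lemma live. First, the isoperimetric inequality you quote with bound $e^{-\lambda^2/4}$ is Talagrand's product-space inequality; McDiarmid's version for a uniformly random permutation is usually stated with the weaker constant $e^{-\lambda^2/16}$, and this discrepancy propagates into the exponent, so it is not automatic that your route reproduces the stated $8c^2r\mathbb{E}[X]$ rather than a larger denominator. Second, the median-to-mean conversion together with replacing the data-dependent denominators $c^2rs$ (with $s$ as large as $m+u$) by $8c^2r\mathbb{E}[X]$ under the restriction $t\le\mathbb{E}[X]$ is genuinely delicate bookkeeping (it is the source of the corrected $60c\sqrt{r\mathbb{E}[X]}$ term in Molloy--Reed), so ``consolidating numerical constants'' is doing real work there. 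In summary: your outline is the intended and correct one, but as written it yields an inequality of the stated shape with unspecified absolute constants rather than the lemma verbatim; for how the lemma is actually used in the paper (the proof of Lemma~\ref{lemma:most-clusters-min-degree}, which has huge slack), any constants of this form would suffice.
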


\section{Proof overview}\label{sect:overview}
In this section we overview the proofs of Theorems~\ref{thm:main-vertex-spread} and \ref{thm:vtx-spread-for-factors}.  The proof of Theorem~\ref{thm:vtx-spread-for-factors} is simpler, so we begin with this one.  In fact, for the sake of simplicity, suppose $k=2$, $d=1$ and $F$ is just a triangle, and the host graph $H$ is a graph on $n$ vertices, $n$ is divisible by $3$, and $\delta(H)\geq (2/3+\alpha)n$.   Allen, B\"ottcher, Corsten, Davies, Jenssen, Morris, Roberts, Skokan \cite{ABCDJMRS2022} recently proved Theorem~\ref{thm:F-factors-robustness} for this case (in fact, they proved the stronger result for $\alpha = 0$), but our result (in the $\alpha > 0$ case) is stronger in that it concerns vertex spreadness and also has a shorter proof.  Thus, this special case is still of independent interest, and our proof generalises easily to other graph factors (additional ideas are required to handle connected spanning subgraphs, as we will explain later).  For now, let us also assume that $n$ is divisible by some integer $C$ that is in turn divisible by $3$.

Our goal is to embed a triangle factor in $H$ in a $O(1/n)$-vertex-spread manner. The main idea is quite simple: we wish to first partition $V(H)$ into random ``clusters'' $U_1,\ldots, U_m$ each inducing a graph with good minimum degree, and then embed a triangle factor in each cluster $U_i$ in a deterministic fashion. For the latter task, we hope to rely only on $\delta(U_i)\geq (2/3+\alpha/2)|U_i|$ and $|U_i|$ being divisible by $3$ as a black box to argue that an embedding exists. This step is not randomised, and thus gains us no spreadness whatsoever, so we have to rely exclusively on the randomness of the partition to obtain spreadness on this simple algorithm. Let us first suppose that the partition is chosen uniformly at random given that each $U_i$ has size $C$.  A simple calculation shows that for every set of distinct vertices $x_1, \ldots, x_s\in V(H)$ and every function $f\colon [s]\to [m]$,
    \begin{equation}\label{sketch}
        \Prob{x_i\in U_{f(i)} \text{ for each }i\in [s]}\leq \left(\frac{C}{n}\right)^s.
    \end{equation} 

Suppose that with probability at least $1/2$, each $H[U_i]$ contains a triangle factor. Then, \eqref{sketch} is actually all we need, and by conditioning on each $H[U_i]$ containing a triangle factor, we obtain a $(2C/ n)$-vertex-spread distribution on embeddings of triangle factors in $H$.  

If $C\gg  \log n$, then it is straightforward to show that indeed $\delta(H[U_i])\geq (2/3+\alpha/2)|U_i|$ for each $i$ with probability at least $1/2$ by an application of Chernoff's bound and a union bound over the clusters.  By the Corr\'{a}di--Hajnal theorem~\cite{CH1963}, each $H[U_i]$ contains a triangle factor, as desired.  
This argument already implies the desired result up to a logarithmic factor, but to obtain the desired result, we need $C = O(1)$.  In this regime, it may be the case that with high probability, a small percentage of the $H[U_i]$ contain isolated vertices, and these clusters cannot possibly contain triangle factors. 

A central idea in the proof, replacing the more intricate iterative absorption strategies employed in \cite{KKKOP22, PSSS22}, is a \textit{random redistribution} argument. If $C$ is a large constant, it will still be the case that most of the $U_i$ have good minimum degree ($\delta(H[U_i])\geq (2/3+\alpha/2)|U_i|$); in this case, we call the cluster \textit{good} and otherwise call it \textit{bad}.  The key idea is that we can randomly redistribute vertices of the bad clusters to the good ones while preserving \eqref{sketch} and the minimum degree property. 

With high probability, it will be the case that each vertex $v$ can be added to many other good clusters $U_i$ while ensuring that $H[U_i\cup\{v\}]$ still has good minimum degree. One way to randomly redistribute would be the following. For each vertex $v\in V(H)$ living in a bad cluster, among the good clusters $U_i$ so that $U_i+v$ is also good, choose some $U_i$ uniformly at random, and re-define $U_i \coloneqq U_i+v$. This actually would not break \eqref{sketch}, but it would cost us the property that each $|U_i|$ is divisible by three, which is necessary for finding a triangle factor.
More importantly, it would also cost us the property that each random set has the same size, which is surprisingly critical while showing that the corresponding random embedding is vertex-spread (see the proof of Lemma~\ref{lemma:distribution-is-spread}, in particular, Observation~\ref{obs:windows}).

Hence, we would like to do the redistribution step while maintaining some control over the sizes of the clusters. We achieve this by ensuring each good cluster receives exactly one new element by adding the following step to the algorithm. (Thus, in the triangle factor case, we initially choose each $U_i$ to satisfy $|U_i| \equiv 2\mod 3$).  We first define an auxiliary bipartite graph $B$ between vertices $v$ which need to be redistributed and good clusters $U_i$, having $v\sim_B U_i$ only when $U_i + v$ also has good minimum degree. By potentially redistributing more vertices than necessary, we can ensure that this is a balanced bipartite graph, and it is not too difficult to check that $B$ will have high enough minimum degree to contain a perfect matching with high probability. We want to redistribute randomly, in particular we want a redistribution that preserves \eqref{sketch}. So in $B$, we are looking for a random perfect matching $M$ so that the probability that $M$ extends a given matching of size $s$ is $O(1/n)^s$. In other words, we want to find a $O(1/n)$-spread (not vertex-spread) probability distribution on  perfect matchings in $B$. Such a distribution on perfect matchings does exist, and in fact this assertion is just a weaker version of results already present in \cite{PSSS22}. However the statement we need here has such a short proof that we include the simple outline below (a similar proof due to Pham is recorded in an earlier version of the paper \cite{PSSS22} available on arXiv).
\begin{lemma}[Pham, Sah, Sawhney, Simkin \cite{PSSS22}]\label{lem:graphmatchings}
There exists an absolute constant $C_{\ref*{lem:graphmatchings}}$ with the following property. If $G$ is a balanced bipartite graph on $2n$ vertices with $\delta(G)\geq 3n/4$, then there exists a $(C_{\ref*{lem:graphmatchings}}/n)$-spread distribution on perfect matchings of $G$. 
\end{lemma}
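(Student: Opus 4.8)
The plan is to build $\mu$ from a random greedy matching together with a ``restart'' safeguard. Write $L,R$ for the two sides of $G$, so $|L|=|R|=n$, and fix a small constant $\eta>0$ (its value will set $C_{\ref*{lem:graphmatchings}}$). At each step $t=1,\dots,n$ I pick a uniformly random unprocessed vertex $v_t\in L$ and match it to a uniformly random vertex among its neighbours not yet matched, so that after step $t$ exactly $t$ vertices of each side are used; this is the same as picking a uniform ordering of $L$ and going greedily, but the lazy description is cleaner to analyse. I call a run \emph{bad} if at some step $t$ either some still-unprocessed $u\in L$ has fewer than $\eta(n-t+1)$ unmatched neighbours, or some still-unmatched $w\in R$ has fewer than $\eta(n-t+1)$ unprocessed neighbours; if the run is bad I discard it and start over. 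Then $\mu$ is the law of the perfect matching returned by the first non-bad run.

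\textbf{Non-badness has probability $\ge\tfrac12$.}
The structural point is that, since the ordering of $L$ is uniform, at the start of step $t$ the set of unprocessed left vertices is a \emph{uniformly random} $(n-t+1)$-subset of $L$. Hence the ``$R$-side'' clause is easy: $|N_G(w)\cap(\text{unprocessed})|$ is hypergeometric with mean $\ge\tfrac34(n-t+1)$, so Lemma~\ref{chernoff} plus a union bound handles all $w$ and all $t$ with $n-t+1\ge\mathrm{polylog}\,n$, and the last $\mathrm{polylog}\,n$ steps are dispatched by a crude union bound over the few vertices present. The ``$L$-side'' clause is subtler, since the matched part of $R$ is produced by the procedure; but for $t\le(\tfrac34-2\eta)n$ it holds deterministically (an unprocessed $u$ loses at most one neighbour per step, hence keeps $\ge\tfrac34n-t+1\ge\eta(n-t+1)$ of them), and for larger $t$ I would use a bounded-differences/martingale argument (Lemma~\ref{lem:mcdiarmidperm}, or Azuma) to show $|N_G(u)\cap(\text{unmatched }R)|$ concentrates near $\approx\tfrac34(n-t+1)$ — morally, the matched part of $R$ stays spread out and never disproportionately swallows a single co-neighbourhood $\overline{N_G(u)}$, which has size $\le n/4$. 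For $\eta$ small enough this gives $\Pr[\text{bad}]\le\tfrac12$, so conditioning on non-badness costs only a factor $2$ and the restart terminates almost surely.

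\textbf{Spreadness.}
Condition on non-badness and fix $S=\{a_1b_1,\dots,a_sb_s\}$ with the $a_j\in L$ distinct and the $b_j\in R$ distinct (otherwise $\mu(\{M:M\supseteq S\})=0$). Reveal the run step by step, and let $\tau_j$ be the (random) step at which $a_j$ is processed; note that exactly $\tau_j-1$ vertices of $R$ are matched before step $\tau_j$, so the number of available vertices there is $n-\tau_j+1$, and by non-badness $a_j$ then has at least $\eta(n-\tau_j+1)$ available choices. The key per-edge contributions, conditional on the past, are: (i) at step $\tau_j$, the vertex $a_j$ is matched to $b_j$ with probability at most $1/\bigl(\eta(n-\tau_j+1)\bigr)$ and only if $b_j$ is still unmatched; and (ii) at any step $t$ that is not some $\tau_i$, an unmatched $b_j$ gets matched with probability $\ge|N_G(b_j)\cap(\text{unprocessed }L)|/(n-t+1)^2\ge\eta/(n-t+1)$, using the $R$-side clause. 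Telescoping (i) and (ii) over the steps before $\tau_j$ bounds the $j$-th edge's contribution by $\asymp(n-\tau_j+1)^{\eta-1}/(\eta\,n^{\eta})$, where the factor $\bigl((n-\tau_j+1)/n\bigr)^{\eta}$ is the survival probability of $b_j$ (up to a bounded loss from the at most $s$ steps $\tau_i$, $i\ne j$, that cannot be used to kill $b_j$). Multiplying over $j$ and averaging over the uniformly random $s$-subset $\{\tau_j\}_j\subseteq[n]$ — an elementary symmetric polynomial estimate using $\sum_{r=1}^{n}r^{\eta-1}\le 2n^{\eta}/\eta$ and $\binom{n}{s}\ge(n/s)^s$ — gives $\mu(\{M:M\supseteq S\})\le(C_{\ref*{lem:graphmatchings}}/n)^{s}$; the crucial point is that the $n^{\eta}$ in (i) cancels the $n^{\eta-1}$-weights exactly, so no $\log n$ survives.

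\textbf{Where the difficulty lies.}
The technical heart is the endgame of the non-badness analysis — controlling the $L$-side clause through the last quarter of the process, i.e.\ showing the matched part of $R$ never concentrates on a short co-neighbourhood. This matters because a crude analysis there leaks a $\log n$ factor into the spread: indeed the plain random greedy matching (uniform order, uniform available neighbour, no safeguard) is only $O(\log n/n)$-edge-spread, not $O(1/n)$. One must therefore genuinely exploit the $R$-side control to extract the survival factor $\bigl((n-\tau_j+1)/n\bigr)^{\eta}$ in (ii), and be careful in the telescoping so that the $s$ edges of $S$ do not cannibalise one another's survival opportunities (each collision costs only a bounded factor). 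The remaining routine piece is the final $\mathrm{polylog}\,n$ steps, where concentration is too weak and one argues instead from the fact that only a handful of vertices remain.
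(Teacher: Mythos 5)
Your construction is genuinely different from the paper's (the paper samples $C$ random edges incident to each vertex and takes an arbitrary perfect matching of the resulting sparse subgraph, so spreadness comes from the independent edge-sampling alone), but it contains a fatal gap: the claim that a run of your greedy process is non-bad with probability at least $\tfrac12$ is not just unproven, it is false. Take $L=A\cup B$, $R=X\cup Y$ with $|A|=|X|=3n/4$, $|B|=|Y|=n/4$, where $A$ is complete to $X\cup Y$ and $B$ is complete to $X$ (so $\delta(G)=3n/4$ and perfect matchings exist). Since the processing order on $L$ is uniform, at every time about a quarter of the unprocessed left vertices lie in $B$, so each step consumes a vertex of $X$ with probability about $\tfrac14+\tfrac34\,\xi$, where $s$ is the number of unmatched right vertices and $\xi$ is the fraction of them lying in $X$; this exceeds the ``fair share'' $\xi$ whenever $\xi<1$, so $X$ is over-consumed. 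The drift equation $\tfrac{d\xi}{ds}=\tfrac{1-\xi}{4s}$ with $\xi=3/4$ at $s=n$ gives $\xi(s)=1-\tfrac14(n/s)^{1/4}$, which vanishes at $s=n/256$. By standard concentration the process tracks this trajectory, so with probability $1-o(1)$ there is a time with $s=\Theta(n)$ at which the roughly $s/4$ unprocessed $B$-vertices have fewer than $\eta s$ unmatched neighbours, i.e.\ every run is bad w.h.p. Hence the restart loop does not terminate (equivalently, you condition on an event of probability $o(1)$, so the ``factor $2$'' transfer and the subsequent per-step estimates collapse), and your key assertion that $|N_G(u)\cap(\text{unmatched }R)|$ concentrates near $\tfrac34(n-t+1)$ in the endgame is simply wrong: the unmatched residue piles up inside the co-neighbourhood of the degree-$3n/4$ left vertices, precisely because uniform greedy does not prioritise the right vertices (here $Y$) that only few left vertices can serve.

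By contrast, the paper's argument avoids any process analysis: each vertex samples $C$ uniformly random incident edges; one checks that the sampled subgraph satisfies Hall's condition with probability at least $99/100$ (a static union-bound computation using $\delta(G)\geq 3n/4$); conditioning on this and taking an \emph{arbitrary} perfect matching of the sampled subgraph gives the distribution, and spreadness is immediate because an edge $\{x,y\}$ can appear only if it was sampled at $x$ or at $y$ (probability at most $2C/(3n/4)=O(1/n)$), these events being independent over disjoint edges. Your telescoping/survival computation in the spreadness step is fine in spirit, but it rests entirely on the false non-badness claim; to salvage a process-based proof you would need to bias the greedy choices towards ``endangered'' right vertices (or analyse a genuinely uniform perfect matching), which is a substantially harder task than the lemma requires.
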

\begin{proof}[Proof sketch]
    Consider a random subgraph of $G'$ obtained by sampling $C$ edges incident to each vertex of $G$ uniformly at random (we allow edges to be selected twice). We claim that with probability at least $99/100$, $G'$ contains a perfect matching. One can easily verify this by checking Hall's condition for $G'$. Now, condition $G'$ on the event that it satisfies Hall's condition, and consider an arbitrary perfect matching of $G'$. This defines $M$, which is a random perfect matching of $G$. We claim the distribution of $M$ is $O(1/n)$-spread. Indeed, consider a matching of $M'$ of $G$ of size $s$. Each edge $e$ of $M'$ can be included in $M$ only if $e=\{x,y\}$ is sampled from the side of $x$ or $y$, this event has probability at most $2C/(3n/4)=O(1/n)$. Furthermore, for disjoint edges $e$ and $e'$, these events are independent. This implies that $M$ extends $M'$ with probability at most $O(1/n)^s$, implying the desired spreadness.
\end{proof}

In Section~\ref{sec:randomcluster}, the random partitioning aspect of the proof is treated rigorously, culminating in Lemma~\ref{lem:randompartitioninglemma}, the ``random clustering lemma''. This lemma might be of independent interest, and can potentially be used as a black box to solve adjacent problems. There are two aspects of Lemma~\ref{lem:randompartitioninglemma} that we have not discussed. Firstly, we cannot assume that $n$ is divisible by $C$ for some constant $C=O(1)$, and therefore, we cannot guarantee that \textit{all} clusters in our partition have the same size. We handle this difficulty by having a single exceptional cluster which has a different size, and luckily, it turns out that this is good enough for the applications. 
For the triangle-factor case, we can use that, if all of the non-exceptional clusters $U_i$ have size divisible by three, then so does the exceptional one.  This argument generalises and is sufficient for the proof of Theorem~\ref{thm:vtx-spread-for-factors}.

The more important aspect of Lemma~\ref{lem:randompartitioninglemma} that we have not discussed is that in Theorem~\ref{thm:main-vertex-spread} (and possibly in future applications), we want to find structures which are connected (such as Hamilton cycles), so we need to think about edges that go across the clusters. To reason about this, consider an auxiliary graph $Q$ where vertices represent clusters $U_i$, and we have $U_i\sim_Q U_j$ whenever the bipartite graph between $U_i$ and $U_j$ has good minimum degree. Before the redistribution, $Q$ itself may have many isolated vertices.  Thus, in the redistribution step, this will be another aspect we have to consider, so that we can ensure that $Q$ itself is well-connected. 
If $Q$ itself has a Hamilton cycle, then we can use this cycle to reorder the clusters, and use this new ordering to embed a Hamilton $\ell$-cycle in $H$ path-by-path. That is, we can first find a Hamilton path in each cluster, and then connect them together along a Hamilton cycle of $Q$ to find the desired Hamilton cycle. See Figure~\ref{fig:embedding-algorithm}.

In fact, we need access to \textit{many} Hamilton cycles of $Q$. Indeed, if we were only working with a fixed Hamilton cycle of $Q$, we could guarantee at most $C!^{n/C}$ Hamilton cycles for any choice of $U_1,\ldots, U_m$, since each $U_i$ of size $C$ has at most $C!$ Hamilton paths, and we have at most $n/C$ random sets. Also, it is easy to see there are at most $n!/(n/C)!$ choices for $U_1,\ldots, U_m$ (here we consider two choices the same if they are identical up to relabelling the $U_i$). Thus, in total, this approach could yield at most $C!^{n/C} n!/(n/C)!$ distinct Hamilton cycles.  On the other hand, the definition of $O(1/n)$-vertex-spread requires us to find at least $(n/C)^n$ distinct Hamilton cycles, which is much larger than the previous quantity, as a simple application of Stirling's approximation shows. 

More thought reveals that to maintain vertex spreadness in the algorithm, we need a vertex-spread distribution on Hamilton cycles of $Q$. This may seem a bit circular, but it will be the case that $Q$ is almost complete, so the Dirac condition is satisfied with lots of room to spare. Furthermore, the problem we reduce to is a problem about graphs, whereas our general theorem concerns hypergraphs. The toolkit we have to find Hamilton cycles on graphs is significantly more extensive compared to hypergraphs. To conclude, to find a good ordering on $Q$, we rely on the following result. 
\begin{lemma}\label{lem:graphhamcycles}
    There exists an absolute constant $C_{\ref*{lem:graphhamcycles}}$ with the following property. If $G$ is a graph with $\delta(G)\geq 3n/4$ and $e(G)\geq (1-1/C_{\ref*{lem:graphhamcycles}})\binom{n}{2}$, then there exists a $(C_{\ref*{lem:graphhamcycles}}/n)$-vertex-spread distribution on Hamilton cycles of $G$ (that is, on embeddings $\varphi : C_{n,2,1} \hookrightarrow G)$. 
\end{lemma}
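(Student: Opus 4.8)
The plan is to build a Hamilton cycle of $G$ by a random greedy \emph{insertion} process and then randomise the cyclic labelling. Call a vertex \emph{typical} if it has at most $n/C^{1/3}$ non-neighbours and \emph{exceptional} otherwise; since $e(G)\ge(1-1/C)\binom n2$ there are at most $n/C^{2/3}$ exceptional vertices, and since $\delta(G)\ge 3n/4$ every vertex has at most $n/4$ non-neighbours. First build, greedily, a cyclic sequence $S$ of length $L_0:=\lceil 3n/C^{1/3}\rceil$ that is a cycle in $G$, at each step appending a uniformly random available neighbour of the current endpoint (possible as $\delta(G)>L_0$). Then insert the remaining $n-L_0$ vertices one at a time — all typical ones first, then the exceptional ones — inserting $v$ by choosing a uniformly random \emph{valid} edge $\{x,y\}$ of the current cycle (meaning $x,y\in N(v)$) and replacing it by $x,v,y$. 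A crude count shows the current cycle always has length at least $L_0$, and at least $n-n/C^{2/3}$ once only exceptional vertices remain, so the number of valid edges is always at least $n/C^{1/3}$; in particular the process never fails and produces a Hamilton cycle $H$ of $G$. Finally output the embedding $\varphi\colon C_{n,2,1}\hookrightarrow H$ obtained by placing a uniformly random vertex of $H$ at position $1$ in a uniformly random direction, and let $\mu$ be the resulting distribution.

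Fix distinct positions $i_1,\dots,i_s$ and distinct vertices $y_1,\dots,y_s$. The final rotation and reflection make $\mu(\{\varphi:\varphi(i_j)=y_j\text{ for all }j\})$ equal to $O(1/n)$ times the probability that $y_1,\dots,y_s$ occur in $H$ in the cyclic order and with the gaps dictated by $i_1,\dots,i_s$. So it suffices to show this last probability is at most $(O(C^{1/3})/n)^{s-1}$, which gives that $\mu$ is $(C'/n)$-vertex-spread with $C'=O(C^{1/3})$; taking the absolute constant $C_{\ref*{lem:graphhamcycles}}$ large enough makes $C'\le C_{\ref*{lem:graphhamcycles}}$ and also validates the density hypothesis. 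To prove the bound one reveals the process and tracks the positions of $y_1,\dots,y_s$ in the order they are placed. When a new $y_j$ is placed it lands on a uniformly random valid edge among at least $n/C^{1/3}$ of them, so \emph{at that moment} its position relative to the already-placed $y_i$'s is $(C^{1/3}/n)$-spread; and each subsequent insertion lengthens each current arc by a Bernoulli amount, so by the standard anticoncentration bound $\Prob{\mathrm{Bin}(N,p)=m}=O((Np)^{-1/2})$ applied to the number of later insertions landing in the relevant arc, the final relative position of $y_j$ is still $O(C^{1/3}/n)$-spread. Multiplying these $s-1$ conditional bounds gives the claim. (The degenerate case where some $y_j$ lies in the seed $S$ is easier: there its position is a difference of two of the $L_0=O(n/C^{1/3})$ greedy insertion times, which is already $O(1/n)$-spread before smearing.)

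The anticoncentration bookkeeping — in particular getting the clean $O(1/n)$ with no logarithmic loss, by noting that only the $O(\sqrt{Np})$-wide window of arcs near the correct length contributes to the sum over valid insertion edges — is the only real obstacle; the degree and density hypotheses enter only through the crude counts that keep $\Omega(n)$ valid insertion edges available throughout, so no delicate extremal input is needed. As an alternative, $\mu$ could be taken to be the uniform distribution on Hamilton-cycle embeddings of $G$, in which case a short counting identity reduces the vertex-spread bound to lower-bounding $|\mathrm{Ham}(G)|$ and upper-bounding the number of its Hamilton cycles realising a given partial cyclic pattern; but the latter essentially re-encodes the desired spreadness and the former would require an external enumeration bound, so the explicit construction above is preferable and keeps the argument self-contained.
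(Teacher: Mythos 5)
Your construction and the reduction via a uniformly random rotation/reflection are plausible, and the crude counts keeping $\Omega(n/C^{1/3})$ valid insertion edges (typical vertices first, exceptional ones into an almost-complete cycle) do work; but the central spreadness step has a genuine gap. First, the number of later insertions landing in the arc between two already-placed $y_i,y_j$ is not distributed as $\mathrm{Bin}(N,p)$, and the quoted bound $\Prob{\mathrm{Bin}(N,p)=m}=O((Np)^{-1/2})$ does not apply: the landing probability at each later step depends on the evolving cycle (an arc that receives insertions grows and becomes more likely to receive further ones -- a P\'olya-type reinforcement), and the number of valid edges inside an arc for the vertex currently being inserted depends on that vertex's non-neighbours, not just on the arc length. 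A local-limit (point-probability) bound for this dependent, self-reinforcing growth process is precisely the hard technical core, and it is not proved. Moreover, even granting such a bound, $O((Np)^{-1/2})$ alone is vacuous for small prescribed gaps (it can be $\Theta(1)$); your parenthetical fix -- combine the $(C^{1/3}/n)$-spread of the insertion edge with an $O(\sqrt{Np})$-wide concentration window for the subsequent growth and sum over insertion edges -- is the right heuristic, but it additionally needs a tail bound for the same reinforced process to kill the contribution of insertion edges outside the window, and none of this bookkeeping is carried out. Second, "multiplying these $s-1$ conditional bounds" is not justified as stated: the event that the \emph{final} gap attached to $y_j$ equals its prescribed value is not measurable at the time $y_j$ is inserted, and the gaps for different $j$ share (indeed compete for) the same later insertions, so the per-$j$ bounds, each asserted with respect to a different time, cannot simply be multiplied; one needs an explicit filtration/decomposition (or a re-ordering of what is revealed) to make the product legitimate. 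A minor further point: as described, the greedy seed produces a path, not a cycle; closing it needs a common-neighbour step (easy with $\delta(G)\ge 3n/4$, but it should be said).

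For comparison, the paper's proof is a few lines and entirely different: it takes the $(C'/n)$-vertex-spread distribution on Hamilton \emph{paths} guaranteed by Lemma 7.3 of Pham, Sah, Sawhney, and Simkin under the same hypotheses, and conditions on the event that the two endpoints of the path are mapped to adjacent vertices of $G$; by vertex-spreadness each non-adjacent pair is hit with probability at most $(C'/n)^2$, and since there are at most $n^2/C_{\ref*{lem:graphhamcycles}}$ non-adjacent pairs this event fails with probability at most $1/10$, so conditioning costs only a constant factor in the spread. If you want to keep your argument self-contained rather than cite that lemma, you must supply the anticoncentration analysis of the insertion process and the correct product decomposition sketched above; as written, the proof is incomplete at exactly those points.
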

There are elementary approaches to this problem, but here we will just give a short derivation of the lemma from \cite[Lemma 7.3]{PSSS22}, which guarantees $O(1/n)$-vertex-spread distributions on Hamilton paths under the same hypotheses. The task is to just modify the distribution so that it gives Hamilton cycles instead.
\begin{proof}[Proof of Lemma~\ref{lem:graphhamcycles}]
Set $C'=C(2, 1/4)$ from \cite[Lemma 7.3]{PSSS22}. We claim that $C_{\ref*{lem:graphhamcycles}}=10C'$ has the property in the lemma. First note that by \cite[Lemma 7.3]{PSSS22}, there exists a $(C'/n)$-vertex-spread distribution $\mu$ on Hamilton paths of $G$ (that is, on embeddings $\varphi : P_n\hookrightarrow G$, where $P_n$ is $C_{n,2,1}$ with the edge $\{1,n\}$ removed). Let $\phi$ be a random embedding sampled according to $\mu$, and let $\mu'$ be the resulting distribution when $\mu$ is conditioned on the embedding $\phi$ mapping the end points of $P_n$ to adjacent vertices of $G$, i.e. $\{\phi(1),\phi(n)\} \in E(G)$. In the distribution given by $\mu$, for every fixed $u, v \in V(G)$, the probability that $\phi(1) = u$ and $\phi(n) = v$ is at most $(C'/n)^2$ by the vertex-spreadness of $\mu$. Furthermore, by a union bound over all non-adjacent $u,v\in V(G)$, we have that $\mu\left(\{\phi(1), \phi(n)\}\notin E(G)\right)\leq (C'/n)^2 (1/C_{\ref*{lem:graphhamcycles}})n^2 \leq 1/10$. So $\mu'$ is obtained from $\mu$ by conditioning on an event with probability at least $9/10$. Therefore, $\mu'$ is $(2C'/n)$-vertex-spread, so also $(C_{\ref*{lem:graphhamcycles}}/n)$-vertex-spread.
\end{proof}


\section{Random clustering lemma}\label{sec:randomcluster}

This section is devoted to proving the following ``random clustering lemma''.

\begin{lemma}\label{lem:randompartitioninglemma} Let $1 / n \ll 1 / C' \ll 1/C\ll \alpha, \eps, 1/k, 1/d,1/t\leq 1$ where $n,k,d,t,C\in\mathbb N$, and let $\delta \in [0,1]$.  If $H$ is a $k$-uniform hypergraph on $n$ vertices with $\delta_d(H)\geq (\delta +\alpha)\binom{n - d}{k-d}$, then there exists a random partition $\mathcal{U}\coloneqq\{U_1,U_2,\ldots, U_m\}$ of $V(H)$ with the following properties:
\begin{enumerate}[label = {(\arabic{enumi}})]
    \item\label{size} $|U_i|=C$ for each $2\leq i\leq m$ and $|U_1|$ equals $(C - 1)C$ plus the remainder when $n$ is divided by $(C-1)C$;
    \item\label{mindegree} $\delta_d(H[U_i])\geq (\delta +\alpha/2)\binom{|U_i|}{k-d}$ for each $i\in [m]$;
    \item\label{minconnectdegree} for every $i \in [m]$, there exist $u_i \in U_i$, $T_i \subseteq U_i \setminus \{u_i\}$ of size $t$, and $N_i \subseteq \mathcal U\setminus\{U_i\}$ of size at least $(1 - \eps)m$ such that for every $U_j \in N_i$,
    \begin{equation*}
        \delta_d(H[U_i \cup T_j \setminus \{u_i\}]) \geq (\delta + \alpha/2)\binom{|U_i \cup T_j|}{k - d} \qquad \text{and} \qquad\delta_d(H[U_j\cup T_i\setminus \{u_j\}]) \geq (\delta + \alpha / 2)\binom{|U_j\cup T_i|}{k - d};
    \end{equation*}
    \item\label{spreadness} for every set of distinct vertices $y_1, \ldots, y_s\in V(H)$ and every function $f\colon [s]\to [m]$,
    \begin{equation*}
        \Prob{y_i\in U_{f(i)} \text{ for each }i\in [s]}\leq \left(\frac{C'}{n}\right)^s.
    \end{equation*} 
\end{enumerate}
\end{lemma}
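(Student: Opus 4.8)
The plan is to build $\mathcal U$ in two stages: a uniformly random ``coarse'' partition of $V(H)$ into blocks, all but one of size $(C-1)C$, followed by an independent random split of each block into clusters of size exactly $C$, re-sampled until the clusters it produces are themselves dense enough. Concretely, put $r := n \bmod (C-1)C$ and $q := (n-r)/((C-1)C)$, so that $n = \big((C-1)C+r\big)+(q-1)(C-1)C$, and sample a uniformly random partition $\mathcal B = \{B_0,B_1,\dots,B_{q-1}\}$ of $V(H)$ with $|B_0| = (C-1)C+r$ and $|B_i| = (C-1)C$ for $i\ge1$ (equivalently, apply a uniformly random permutation to $V(H)$ and cut into consecutive intervals of these lengths). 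The output will be $U_1 := B_0$ together with, for every $i\ge1$, a partition of $B_i$ into $C-1$ clusters of size $C$; the vertex $u_\bullet$ and the $t$-set $T_\bullet$ attached to each cluster (and to $B_0$) are chosen by a fixed rule.

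\textbf{Most blocks are good.} Call a set $S\subseteq V(H)$ \emph{good} if $\delta_d(H[S])\ge(\delta+\tfrac{3}{4}\alpha)\binom{|S|}{k-d}$ and, moreover, the same bound with $\tfrac{1}{2}\alpha$ in place of $\tfrac{3}{4}\alpha$ holds for every $S'$ obtained from $S$ by deleting one vertex or by adjoining $t$ vertices from $V(H)\setminus S$; the extra clauses make goodness robust under single-vertex changes and will supply \ref{minconnectdegree}. As each $B_i$ is a uniformly random set of size $\Theta(C^2)$, Lemma~\ref{lem:McDAppl} applied to each of the $O(1)$ relevant $d$-sets plus a union bound shows that a fixed block is bad with probability at most some $p_0=p_0(\alpha,k,d,t,C)$ with $p_0\to0$ as $C\to\infty$; since $1/C\ll\alpha,\eps,1/k,1/d,1/t$, we may take $p_0$ below any prescribed function of $\eps$ and $C$. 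The number of bad blocks is a statistic of the random permutation that is $O(1)$-Lipschitz under transpositions and is certified by $O(1)$ positions per bad block, so Lemma~\ref{lem:mcdiarmidperm} gives that, with probability $1-e^{-\Omega(q)}$, at most $\tfrac{\eps}{C}q$ of $B_1,\dots,B_{q-1}$ are bad and, moreover, every good block is ``good towards'' all but at most $\tfrac{\eps}{2C}q$ of the other blocks (the $\tfrac{1}{2}\alpha$ clauses persist for its union with any $t$-subset of those blocks, and symmetrically). In particular a partition with all the desired properties exists; henceforth we tacitly condition on these $e^{-\Omega(q)}$-likely events, which — since $q=\Theta(n)$ — distorts every probability below by a factor $1+o(1)$.

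\textbf{Splitting the blocks.} A \emph{good} set $B$ of size $j(C-1)C$ can be split into $j(C-1)$ clusters of size $C$, all good, with probability $\ge\tfrac{1}{2}$ over a uniformly random such split: each cluster is then a uniformly random $C$-subset of the dense set $B$, so Lemma~\ref{lem:McDAppl} applied inside $B$ makes it good with probability $\ge1-p_0$, and one union-bounds over the $j(C-1)=O(1)$ clusters. We split each good block $B_i$ according to a uniformly random split conditioned on all its clusters being good. A bad block $B_i$ is first merged with $O(1)$ further good blocks — by another concentration estimate the union of a bad block with a bounded number of random good blocks is good with probability $1-e^{-\Omega(1)}$, the expected total number of good blocks spent on merges is $O(p_0 q)\ll q$, and in the rare case a given merge needs more than $O(\log n)$ blocks we cap it there (a $o(1)$-probability event on which we also condition) — and the resulting super-block is split the same way. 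The output $\mathcal U$ satisfies \ref{size} by construction and \ref{mindegree} because every cluster is good; and since only $O(\eps m)$ clusters lie in blocks that were ever merged, for each cluster $U_i$ all but at most $\eps m$ of the others lie in coarse blocks towards which the block of $U_i$ was good, which — once one checks the adjoined-$t$-set clauses pass from a good block to each of its good clusters — gives \ref{minconnectdegree} with $N_i$ that set of clusters.

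\textbf{Spreadness, and the main obstacle.} For the coarse partition alone, the multivariate hypergeometric bound gives $\Prob{y_i\in B_{g(i)}\text{ for all }i}\le(c_0C^2/n)^s$ for any distinct $y_1,\dots,y_s$, any $g$, and an absolute $c_0$. Conditional on $\mathcal B$ and on the (bounded) merge data, distinct super-blocks are split independently, and within a super-block $S$ of size $\Theta(aC^2)$ (a union of $a$ coarse blocks) the conditioned all-good split puts a given vertex in a given one of its $|S|/C$ clusters with probability $O(1)\cdot C/|S|$; summing over which of the $a$ coarse blocks of $S$ contains $y_i$ shows $\Prob{y_i\in(\text{a fixed cluster of }S)}=a\cdot\Theta(C^2/n)\cdot O(C/|S|)=O(C/n)$, independent of $a$ (so no stray $\log n$ appears), while $\Prob{y_i\in U_1}=|B_0|/n=O(C^2/n)$. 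Multiplying over $i$ (the relevant events are negatively correlated across distinct clusters, and distinct super-blocks are independent) and absorbing the $1+o(1)$ from the conditioning yields $\Prob{y_i\in U_{f(i)}\text{ for all }i}\le(c_1C^2/n)^s$ for an absolute $c_1$; choosing the hierarchy so that $C'\ge2c_1C^2$ gives \ref{spreadness}. The crux is exactly the tension between \ref{size} and \ref{spreadness}: one is not free to relocate the vertices of bad blocks wherever is convenient, because all clusters must have size \emph{exactly} $C$ and — as the proof overview stresses — clusters of unequal size would already wreck the spreadness bookkeeping. Handling everything in block-sized pieces that are split only \emph{after} a witnessed all-good split is what makes the exact-size and the spread requirements coexist; the residual technical point, dispatched by concentration, is that good blocks (and suitable unions of a few blocks) do admit all-good splits.
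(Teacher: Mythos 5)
Your block-splitting scheme is a genuinely different route from the paper (which keeps proto-clusters of size $C-1$ and fixes the bad ones by redistributing \emph{single vertices} into good clusters via a spread perfect matching, Lemma~\ref{lem:graphmatchings}), but as written it has two substantive gaps. First, property \ref{minconnectdegree} is not actually delivered. Your ``good'' definition already overreaches: robustness under adjoining an \emph{arbitrary} $t$-set of $V(H)$ forces every $d$-set of $V(H)$ to have large degree into the block, and with blocks of constant size $\Theta(C^2)$ the per-$d$-set failure probability is a constant, so a union bound over $n^d$ sets fails and a fixed block is good in that sense with probability $o(1)$, not $1-p_0$. More importantly, even the weaker pairwise block-level statement you fall back on does not ``pass from a good block to each of its good clusters'': $\delta_d(H[B\cup T'])$ being large says nothing about $\delta_d(H[U\cup T'])$ for a size-$C$ cluster $U\subseteq B$, since the witnessing edges may avoid $U$. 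The cluster-level cross conditions need their own concentration argument over the split, and the per-(cluster, partner) failure probability is only a constant $e^{-\Omega_\alpha(C)}$; since you condition the split only on \emph{internal} goodness of the clusters, with constant probability some cluster will fail to be good towards more than $\eps m$ of the others, violating \ref{minconnectdegree}. (This is fixable by also conditioning each block's split on outward goodness towards most other blocks, which preserves independence across super-blocks, but that step and its spread cost are missing; it is exactly the failure mode the paper's vertex-level redistribution is designed to avoid.)

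Second, the merge step undermines \ref{spreadness}. There can be $\Theta(n)$ bad blocks, and by your own estimate a merge with a bounded number of partner blocks fails with constant probability, so some super-blocks must grow to $\Theta(\log n)$ blocks (hence your cap). For such a super-block $S$, the event that a uniformly random split into size-$C$ parts makes \emph{all} $|S|/C=\Theta(\log n)$ clusters good has probability roughly $(1-p_0)^{|S|/C}=n^{-\Theta(p_0 C)}$, i.e.\ polynomially small; the crude bound $\Prob{A\mid E}\le \Prob{A}/\Prob{E}$ then no longer gives the claimed ``$O(1)\cdot C/|S|$'' conditional placement probability, and your per-vertex bound $O(C/n)$ is unjustified precisely for targets lying in large super-blocks. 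One would need either a finer analysis of the conditioned split (or of the joint tail of super-block size against the conditioning loss), or a different repair mechanism; the paper sidesteps all of this because its only non-permutation randomness is a single $O(1/m)$-spread perfect matching, conditioned only on global events of probability $\ge 97/100$. A smaller unaddressed point: $U_1=B_0$ is never guaranteed to satisfy \ref{mindegree} or its part of \ref{minconnectdegree}; this needs an explicit (constant-probability) conditioning as in the paper's events $E_2,E_3$.
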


For the remainder of this section, fix $1 / n \ll 1 / C' \ll 1/C\ll \alpha, 1/k, 1/d,1/t\leq 1$ where $n,k,d,t,C\in\mathbb N$ as in Lemma~\ref{lem:randompartitioninglemma}.  Let $r$ be $(C - 1)C$ plus the remainder when $n$ is divided by $(C-1)C$. Let $W_1 \coloneqq [r]$, and for $i\geq 2$, let $W_i \coloneqq [r + (C - 1)(i - 1)] \setminus \bigcup_{j=1}^{i-1} W_j=[r + (C - 1)(i - 2)+1, r + (C - 1)(i - 1)]$.  Note that $\{W_1, \dots, W_m\}$ is a partition of $[n]$ for $m\coloneqq (n - r)/(C - 1) + 1$.
Let $H$ be a $k$-uniform hypergraph with vertex set $V \coloneqq \{v_1, \dots, v_n\}$ satisfying $\delta_d(H)\geq (\delta +\alpha)\binom{n - d}{k-d}$.  Let $\pi : [n]\rightarrow [n]$ be a uniformly random permutation of $[n]$, and let $V_i \coloneqq \{v_j : \pi(j) \in W_i\}$.  

As described in Section~\ref{sect:overview}, the partition of $V(H)$ into $V_1, \dots, V_m$ is in a sense ``close'' to the one we want in Lemma~\ref{lem:randompartitioninglemma} because the fourth condition holds, and the second and third condition hold apart from a few exceptions.  Lemmas~\ref{lemma:most-clusters-min-degree}, \ref{lemma:most-clusters-many-vtcs}, and \ref{lemma:V1-in-degree} set up precisely the conditions we need for our \textit{random redistribution} argument to succeed.  Each of these lemmas rely on the following lemma.  

\begin{lemma}\label{lemma:cluster-min-degree-whp}
    For every $T \subseteq V$ of size at most $t$ and every $i \in [m]$,
    \begin{equation*}
    \Prob{\delta_d(H[V_i\cup T])\geq (\delta+2\alpha/3)\binom{|V_i\cup T|}{k-d}}\geq 1-\exp(-C\alpha^2/200).
    \end{equation*} 
    Moreover, the same inequality holds for random sets $T \coloneqq \{v_j : \pi(j) \in W\}$ for every $W\subseteq [n]$ of size at most $t$.
\end{lemma}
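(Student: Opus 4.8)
\textbf{Proof plan for Lemma~\ref{lemma:cluster-min-degree-whp}.}
The plan is a \emph{weighted} union bound over $d$-sets, feeding each one into Lemma~\ref{lem:McDAppl} via an appropriate link hypergraph. Write $a \coloneqq |W_i| = |V_i|$ (so $C-1 \le a < 2C^2$), put $U \coloneqq V_i \cup T$ and $\beta \coloneqq (\delta + 2\alpha/3)\binom{a+t}{k-d}$, and for $D, S \subseteq V$ let $\deg(D,S)$ be the number of edges of $H$ containing $D$ all of whose remaining vertices lie in $S$ (matching the notation of Lemma~\ref{lem:McDAppl}), so that $\delta_d(H[U]) = \min_{D \in \binom{U}{d}}\deg(D, U)$. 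Since $|U| \le a+t$, a failure of the event in question forces some $D \in \binom{U}{d}$ with $\deg(D, U) < \beta$, and $\deg(D, U) \ge \deg(D, V_i)$ because $V_i \subseteq U$; passing to the lower bound $\deg(D, V_i)$ is the crucial move, since it removes all dependence on the arbitrary fixed set $T$. A union bound then yields
\begin{equation*}
\Prob{\delta_d(H[U]) < (\delta+2\alpha/3)\tbinom{|U|}{k-d}} \ \le\ \sum_{D \in \binom{V}{d}} \Prob{D \subseteq U}\cdot\ProbCond{\deg(D, V_i) < \beta}{D \subseteq U},
\end{equation*}
and here $\sum_{D \in \binom{V}{d}}\Prob{D \subseteq U} = \Expect{\binom{|U|}{d}} \le \binom{a+t}{d}$, a constant depending only on $C, t, d$.

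To bound the conditional probabilities uniformly in $D$, fix $D$ and let $L_D$ be its link in $H$, a $(k-d)$-uniform hypergraph on $V\setminus D$ with $e(L_D) = \deg(D, V) \ge (\delta+\alpha)\binom{n-d}{k-d}$. Condition further on $V_i \cap D = D_0$ for each $D_0$ with $D \setminus T \subseteq D_0 \subseteq D$ (the only $D_0$ compatible with $D \subseteq U$). Conditionally, $V_i \setminus D_0$ is a uniformly random $(a-|D_0|)$-subset of $V(L_D) = V \setminus D$, and one checks $\deg(D, V_i) = e\!\left(L_D[V_i \setminus D_0]\right)$. Now apply Lemma~\ref{lem:McDAppl} to $L_D$ — with ambient uniformity $k-d$, distinguished-set parameter $0$, and sample size $a-|D_0| \ge C-1-d$ — together with the routine rescaling between the $n^{k-d}$-normalisation used there and the $\binom{\cdot}{k-d}$-normalisation here; this rescaling is the one place the full hierarchy is used, to absorb the error terms of sizes $O(k^2/n)$ and $O(k(t+d)/C)$ while still leaving a density gap of order $\alpha$ between hypothesis and target. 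This gives $\ProbCond{\deg(D,V_i) < \beta}{D \subseteq U,\, V_i \cap D = D_0} \le \exp(-\Omega(C\alpha^2))$, uniformly in $D$ and $D_0$; since the events $\{V_i \cap D = D_0\}$ partition $\{D \subseteq U\}$, the same bound holds for $\ProbCond{\deg(D,V_i)<\beta}{D \subseteq U}$, and the display above becomes
\begin{equation*}
\Prob{\delta_d(H[U]) < (\delta+2\alpha/3)\tbinom{|U|}{k-d}} \ \le\ \binom{a+t}{d}\exp\!\left(-\Omega(C\alpha^2)\right)\ \le\ \exp(-C\alpha^2/200),
\end{equation*}
the last inequality swallowing the constant $\binom{a+t}{d}$ via $1/C \ll 1/t, 1/d, \alpha, 1/k$. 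The second part of the statement needs no new idea: if $T = \{v_j : \pi(j) \in W\}$ then $U = \{v_j : \pi(j) \in W_i \cup W\}$ is simply a uniformly random subset of $V$ of a fixed size in $[a, a+t]$, conditioning on $D \subseteq U$ leaves $U \setminus D$ uniformly random of the right size in $V \setminus D$, and the identical link-hypergraph application of Lemma~\ref{lem:McDAppl} goes through verbatim.

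The main difficulty is structural, not computational: the family $\binom{V_i \cup T}{d}$ of $d$-sets we must control is itself random, so a plain union bound over all $\binom{n}{d}$ $d$-sets of $V$ is hopeless, since each term is only $\exp(-\Omega(C\alpha^2))$, a constant, rather than $o(n^{-d})$. The remedy is exactly the weighting above: weight each $D$ by its probability $\Prob{D\subseteq V_i\cup T}\approx (C/n)^{|D\setminus T|}$ of being relevant, and exploit that conditioning on relevance (and on which vertices of $D$ fall into $V_i$) keeps the remainder of $V_i$ uniformly random, so that Lemma~\ref{lem:McDAppl} still applies after passing to the link of $D$. Everything else — the conversion between the two normalisations and tracking the constants — is routine bookkeeping given the constant hierarchy.
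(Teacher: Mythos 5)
Your proposal is correct and follows essentially the same route as the paper's proof: the paper indexes its constant-size union bound by the ``patterns'' $(D',W')$ with $D'\subseteq T$ and $W'\subseteq W_i$ and then applies Lemma~\ref{lem:McDAppl} after conditioning on the placement of the $d$-set, which is your weighted union bound over $D\in\binom{V}{d}$ together with the conditioning on $V_i\cap D=D_0$ in different packaging. The only cosmetic differences are that the paper applies Lemma~\ref{lem:McDAppl} directly in $H$ (with $D'\cup D''$ as the distinguished set and $V_i\setminus D''$ as the random set) rather than to the link hypergraph with parameter $0$, and it glosses the same normalisation/rescaling bookkeeping that you explicitly flag.
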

\begin{proof}
    Let $T \subseteq V$ of size at most $t$.  Note that each $V_i$ has the same distribution as a uniformly random subset of $V$ of size $C-1$ for every $i \in \{2, \dots, m\}$, and similarly, $V_1$ has the same distribution as a uniformly random subset of $V$ of size $r$. 
    Moreover, for every $W \subseteq [n]$ and $D \subseteq V$ with $|W| = |D|$, in the distribution conditional on the event that $D = \{v_j : \pi(j) \in W\}$, each set $V_i \setminus D$ has the same distribution as a uniformly random subset of $V$ of size $|W_i \setminus W|$.
    
    For every $D' \subseteq T$ of size at most $d$ and $W' \subseteq W_i$ of size $d - |D'|$, let $E_{D',W'}$ be the event that $|D| = d$
    and $\mathrm{deg}(D ,V_i) < (\delta+3\alpha/4)\binom{|V_i|}{k-d}$, where $D \coloneqq \{v_j : \pi(j) \in W'\}\cup D'$.  Note that if $\overline{E_{D', W'}}$ holds for every such $D'$ and $W'$, then $\delta_d(H[V_i\cup T])\geq (\delta+3\alpha/4)\binom{|V_i|}{k-d}\geq (\delta+2\alpha/3)\binom{|V_i\cup T|}{k-d}$.
    We claim that for every such $D'$ and $W'$, we have 
    \begin{equation*}
        \Prob{E_{D', W'}} \leq \exp(-C\alpha^2 / 100).
    \end{equation*}
    Indeed, by the law of total probability, it suffices to show
    \begin{equation*}
        \ProbCond{E_{D', W'}}{\{v_j : \pi(j) \in W'\} = D''} \leq \exp(-C\alpha^2 / 100)
    \end{equation*}
    for every $D'' \subseteq V \setminus D'$ of size $d - |D'|$, which follows from Lemma~\ref{lem:McDAppl} with $D' \cup D''$ playing the role of $D$ and $V_i \setminus D''$ playing the role of $A$.
    Since there are $\binom{|T| + |W_i|}{d} \leq (3C^2)^d$ (using $1/C \ll 1/t$ and $|W_i| \leq 2C^2$) choices for $D'$ and $W'$, the result follows by the union bound, since $1/C\ll \alpha,1/d$.

If instead $T \coloneqq \{v_j : \pi(j) \in W\}$ for some $W \subseteq [n]$ of size at most $t$, then the proof is essentially the same, so we omit it.
\end{proof}

The next lemma will be used to ensure \ref{minconnectdegree} holds.  When applied with $T = \{v\}$ for a vertex $v$ in a ``bad'' cluster, it also ensures that there are many good clusters to which $v$ can be added in the random redistribution step.
\begin{lemma}\label{lemma:most-clusters-min-degree}
    With probability at least $99/100$, for every $T \subseteq V$ of size at most $t$, there are at least $(1 - 1/C^2)m$ sets $V_i$ for which $\delta_d(H[V_i\cup T])\geq  (\delta+2\alpha/3)\binom{|V_i\cup T|}{k-d}$ and for every $W \subseteq [n]$ of size at most $t$, the same property holds for the random set $T \coloneqq \{v_j : \pi(j) \in W\}$.
\end{lemma}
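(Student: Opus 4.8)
The plan is a two-level union bound combined with a ``reveal the clusters one at a time'' estimate. Fix a set $T \subseteq V$ with $|T| \le t$, call a cluster $V_i$ \emph{$T$-bad} if $\delta_d(H[V_i \cup T]) < (\delta + 2\alpha/3)\binom{|V_i \cup T|}{k-d}$, and write $Y_T$ for the number of $T$-bad clusters. Since there are at most $n^{t+1}$ choices of $T$, and since $Y_T \le m/C^2$ already forces at least $(1 - 1/C^2)m$ non-bad clusters, it suffices to show $\Prob{Y_T \ge \lceil m/C^2\rceil} \le \tfrac1{100}n^{-(t+1)}$ for each fixed $T$ and then take a union bound over $T$. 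So the real content is an upper-tail bound for $Y_T$.

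One could try to feed $Y_T$ into the permutation concentration inequality (Lemma~\ref{lem:mcdiarmidperm}), but $\Expect{Y_T}$ may be tiny (it is $0$ when $H$ is complete), which makes that bound too weak, so instead I would bound the tail by a union bound over sub-collections of clusters. Set $s := \lceil m/C^2\rceil$; if $Y_T \ge s$ then some $\mathcal I \in \binom{[m]}{s}$ consists entirely of $T$-bad clusters, so $\Prob{Y_T \ge s} \le \binom ms \max_{|\mathcal I| = s}\Prob{\text{every }V_i,\ i \in \mathcal I,\ \text{is }T\text{-bad}}$. To bound the inner probability I would reveal the clusters of $\mathcal I$ one at a time, using the observation already made in the proof of Lemma~\ref{lemma:cluster-min-degree-whp} that, conditioned on the exact identities of any sub-collection of clusters, each remaining cluster is a uniformly random subset of the prescribed size of the complement $V'$ of the already-revealed vertices. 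Since at most $s$ clusters are revealed and only one of them can be the large cluster $V_1$, at most $sC + O(C^2) \le 2m/C \le 4n/C^2$ vertices are removed; as $1/C \ll \alpha, 1/k$, this is at most $\alpha n/(100k)$, so $H[V']$ still satisfies the minimum-codegree hypothesis with $0.9\alpha$ in place of $\alpha$. Hence, by essentially the computation in the proof of Lemma~\ref{lemma:cluster-min-degree-whp} (which uses only that a cluster is a uniformly random set of bounded size, together with Lemma~\ref{lem:McDAppl}), conditionally on any fixed outcome of the previously revealed clusters the probability that the next one is $T$-bad is at most $q := \exp(-C\alpha^2/300)$, and multiplying over the $s$ steps gives $\Prob{\text{every }V_i,\ i \in \mathcal I,\ \text{is }T\text{-bad}} \le q^s$.

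Putting this together, $\Prob{Y_T \ge s} \le \binom ms q^s \le (em/s)^s q^s \le (eC^2 q)^s$, and since $1/C \ll \alpha$ we have $eC^2 q = eC^2 \exp(-C\alpha^2/300) \le \tfrac12$, so $\Prob{Y_T \ge s} \le 2^{-s} \le 2^{-m/C^2}$. As $m \ge n/(2C)$, this is $2^{-\Omega(n/C^3)}$, far below $\tfrac1{100}n^{-(t+1)}$ for large $n$, so the union bound over the $\le n^{t+1}$ sets $T$ completes the proof. The step I expect to be the main obstacle is the conditional one: verifying carefully that fixing the identities of a sub-collection of clusters really leaves the remaining clusters uniformly distributed over $V'$ and that $V'$ is still large and dense enough for the single-cluster estimate of Lemma~\ref{lemma:cluster-min-degree-whp} to be reapplied with essentially the same constant. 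Both of these are guaranteed by the constant hierarchy $1/n \ll 1/C' \ll 1/C \ll \alpha, \varepsilon, 1/k, 1/d, 1/t$ fixed at the start of the section.
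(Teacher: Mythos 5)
Your proposal is correct, but it takes a genuinely different route from the paper. The paper fixes $T$, lets $X_T$ be the number of \emph{good} clusters, notes $\Expect{X_T} \geq (1-\exp(-C\alpha^2/200))m$ by Lemma~\ref{lemma:cluster-min-degree-whp} and linearity, and then gets the upper-tail-free concentration from McDiarmid's inequality for random permutations (Lemma~\ref{lem:mcdiarmidperm}, with $c=2$, $r=2C^2$, deviation $m/C^3$), giving failure probability $\exp(-n/C^{10})$ per $T$ before the union bound over $T$ — so your worry that McDiarmid is unusable because $\Expect{Y_T}$ may vanish is sidestepped by applying it to the good-cluster count rather than the bad-cluster count. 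Your alternative replaces the concentration inequality by a first-moment bound over $s$-subsets of clusters together with a sequential-exposure (chain-rule) estimate: conditioned on the exact identities of previously revealed clusters, the next cluster is uniform over the depleted vertex set $V'$, and since only $O(n/C^2)$ vertices are deleted the hypergraph $H[V']$ retains the degree hypothesis up to a harmless loss, so the single-cluster computation of Lemma~\ref{lemma:cluster-min-degree-whp} applies conditionally with a slightly worse constant, yielding $\binom{m}{s}q^s \le 2^{-\Omega(n/C^3)}$. This is sound (the conditional uniformity and the deterministic degree degradation are exactly as you say, and the level of care needed matches what the paper itself does when it conditions on $\{v_j:\pi(j)\in W'\}$ inside Lemma~\ref{lemma:cluster-min-degree-whp}); what it buys is a more elementary argument avoiding the permutation concentration inequality entirely, at the cost of having to re-verify the single-cluster estimate in conditional form rather than using Lemma~\ref{lemma:cluster-min-degree-whp} as a black box. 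Both routes give per-$T$ failure probabilities exponentially small in $n$, which is far more than the union bound over the at most $n^{t+1}$ sets $T$ requires.
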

\begin{proof} 
For every $T \subseteq V$ of size at most $t$, let $X_T$ denote the number of random sets $V_i$ such that the minimum degree condition above holds. By linearity of expectation and Lemma~\ref{lemma:cluster-min-degree-whp}, we have $\Expect{X_T} \geq (1 - \exp(-C\alpha^2/200))m$.  
 Note that interchanging two elements of the random permutation $\pi$ either has no effect on $V_1, \dots, V_m$ or changes the values of two random sets $V_i$ and $V_j$ and therefore can affect the value of $X_T$ by at most $2$. Also, if $X_T\geq s$, this can be certified by at most $2C^2s$ choices of the random permutation. Therefore, by Lemma~\ref{lem:mcdiarmidperm} applied with $c=2$, $r=2C^2$, and $t = m / C^3$, we have
\begin{equation*}
    \Prob{X_T < (1 - 1/C^2)m} \leq 4\exp\left(-\frac{(m/C^3)^2}{64C^2m}\right) \leq \exp(-n / C^{10}).
\end{equation*}
By a union bound, we have that $X_T \geq (1 - 1/C^2)m$ for every $T \subseteq V$ of size at most $t$ with probability at least $99/100$, as desired.

If instead $T \coloneqq \{v_j : \pi(j) \in W\}$ for some $W \subseteq [n]$ of size at most $t$, then the proof is essentially the same, so we omit it.
\end{proof}

The next lemma ensures that most clusters are good.  Moreover, in our random redistribution argument, we will assign every good cluster a vertex from a bad cluster.  The lemma also ensures that for every good cluster (except $V_1$), there are many options for this vertex.

\begin{lemma}\label{lemma:most-clusters-many-vtcs}
    With probability at least $99/100$, for all but at most $\exp\left(-\alpha^2C/1000\right)m$  many $i\in \{2,3,\ldots, m\}$, there are at least $(1-\exp\left(-\alpha^2C/500\right))n$ vertices $v\in V$ such that $\delta_d(H[V_i\cup \{v\}])\geq  (\delta+2\alpha/3)\binom{|V_i\cup \{v\}|}{k-d}$.
\end{lemma}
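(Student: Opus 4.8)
My plan is to mimic the structure of the proof of Lemma~\ref{lemma:most-clusters-min-degree}, but with a two-level argument: first show that for a *fixed* cluster index $i$, the event that "many vertices $v$ are good for $V_i$" holds with probability at least $1 - \exp(-\alpha^2 C/\Theta(1))$; then apply a concentration/counting argument over the $m-1$ choices of $i$ to conclude that the number of "bad-for-all-but-a-few-$v$" indices is small with probability at least $99/100$.

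For the fixed-$i$ step, I would fix $i \in \{2,\dots,m\}$ and, for each vertex $v \in V$, let $Y_{i,v}$ be the indicator that $\delta_d(H[V_i \cup \{v\}]) \geq (\delta + 2\alpha/3)\binom{|V_i \cup \{v\}|}{k-d}$. Conditioning on the event $v \notin V_i$ (which has probability $1 - O(C/n)$, negligible), the set $V_i$ together with the singleton $\{v\}$ plays exactly the role of "$V_i \cup T$" with $|T| = 1 \leq t$ in Lemma~\ref{lemma:cluster-min-degree-whp}, so $\Prob{Y_{i,v} = 0} \leq \exp(-C\alpha^2/200)$, hence $\Expect{\sum_{v} Y_{i,v}} \geq (1 - 2\exp(-C\alpha^2/200))n$. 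Now let $Z_i \coloneqq \sum_{v \in V} (1 - Y_{i,v})$ be the number of vertices that are *bad* for $V_i$; then $\Expect{Z_i} \leq 2\exp(-C\alpha^2/200)n$, and by Markov's inequality $\Prob{Z_i \geq \exp(-\alpha^2 C/500)n} \leq 2\exp(-C\alpha^2/200)/\exp(-\alpha^2 C/500) \leq \exp(-\alpha^2 C/1000)$ (using $C\alpha^2/200 - \alpha^2 C/500 \geq \alpha^2 C/1000$ and absorbing the constant $2$, valid since $C$ is large). So for each fixed $i$, the cluster $V_i$ is "good" (meaning at most $\exp(-\alpha^2 C/500)n$ vertices are bad for it) with probability at least $1 - \exp(-\alpha^2 C/1000)$.

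For the over-$i$ step, let $N$ be the number of indices $i \in \{2,\dots,m\}$ that are *not* good in the above sense; by linearity, $\Expect{N} \leq \exp(-\alpha^2 C/1000) m$. To upgrade this expectation bound to a high-probability bound I would apply McDiarmid's inequality for random permutations (Lemma~\ref{lem:mcdiarmidperm}) to $N$: interchanging two elements of $\pi$ changes at most two of the $V_j$ and so changes $N$ by at most $2$ (so $c = 2$), and if $N \geq s$ this is certified by at most $2C^2 s$ choices of $\pi$ (the positions determining the $\leq s$ relevant clusters), so $r = 2C^2$. Taking $t = \Expect{N}$ (or a suitable multiple, say $t = \exp(-\alpha^2 C/1000)m$ itself together with the $60c\sqrt{r\Expect{N}}$ slack, which is lower order since $\Expect{N} = \Omega(m) = \Omega(n)$) gives $\Prob{N \geq 2\exp(-\alpha^2 C/1000)m + o(m)} \leq 4\exp(-\Omega(m/C^{O(1)})) \leq 1/100$. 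Adjusting the constant in the exponent (replacing $1000$ by, say, $1000$ with a slightly larger absolute constant inside, which is absorbed since the statement only asks for $\exp(-\alpha^2 C/1000)m$ and $C$ is large), we get that with probability at least $99/100$, at most $\exp(-\alpha^2 C/1000)m$ indices $i$ are bad, which is exactly the claimed conclusion.

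The main obstacle I anticipate is purely bookkeeping rather than conceptual: making the numerical constants in the exponents line up so that the Markov step in the fixed-$i$ argument loses a factor but still leaves $\exp(-\alpha^2 C/1000)$, and so that the McDiarmid slack term $60c\sqrt{r\Expect{N}}$ is genuinely negligible compared to $\Expect{N} = \Theta(m)$ — this needs $m \to \infty$, i.e. $n$ large, which is given. One small subtlety worth stating carefully is the conditioning on $v \notin V_i$ when invoking Lemma~\ref{lemma:cluster-min-degree-whp} with $T = \{v\}$: since $T$ there is a *fixed* set while here $v$ ranges over all of $V$, I would instead invoke the "moreover" clause of Lemma~\ref{lemma:cluster-min-degree-whp} applied to the complementary cluster structure, or simply observe that for the $\Prob{Y_{i,v}=0}$ bound we only need the one-sided estimate, which holds uniformly over $v$ regardless of whether $v \in V_i$ (if $v \in V_i$ then $V_i \cup \{v\} = V_i$ and the bound is immediate from Lemma~\ref{lemma:cluster-min-degree-whp} with $T = \emptyset$). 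With that observation the argument goes through cleanly.
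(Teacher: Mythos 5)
Your first step is exactly the paper's: fix $i$, use Lemma~\ref{lemma:cluster-min-degree-whp} with $T=\{v\}$ to get $\Prob{\text{$v$ is bad for $V_i$}}\le\exp(-\alpha^2C/200)$, then linearity of expectation and Markov to conclude that each fixed cluster has at most $\exp(-\alpha^2C/500)n$ bad vertices except with probability $\exp(-\Omega(\alpha^2 C))$. (Your worry about $v\in V_i$ is unnecessary: Lemma~\ref{lemma:cluster-min-degree-whp} is stated for arbitrary $T\subseteq V$ of size at most $t$, with no disjointness requirement, and you resolve it correctly anyway.)

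The second step, however, has a genuine gap. You apply Lemma~\ref{lem:mcdiarmidperm} to $N$, the number of bad indices, and the resulting tail bound is $4\exp\bigl(-t^2/(8c^2r\Expect{N})\bigr)$ with the constraint $t\le\Expect{N}$; for this to be at most $1/100$ you need a lower bound of the form $\Expect{N}\gtrsim C^2$, and you justify this by asserting $\Expect{N}=\Omega(m)$. That assertion is unsupported: the only available information is the \emph{upper} bound $\Expect{N}\le\exp(-\Omega(\alpha^2C))m$, and nothing prevents $\Expect{N}$ from being $O(1)$ or even $0$ (for instance, if $H$ is complete then every cluster is good with probability $1$), in which case the McDiarmid bound is vacuous, the slack term $60c\sqrt{r\Expect{N}}$ need not be lower order than $\Expect{N}$, and Lemma~\ref{lem:mcdiarmidperm} does not even permit taking $t$ of order $\exp(-\alpha^2C/1000)m$ since it requires $t\le\Expect{N}$. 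The fix is immediate and is precisely what the paper does: no concentration is needed at this level. Since $\Expect{N}\le\exp(-\alpha^2C/500)m$, Markov's inequality alone gives $\Prob{N\ge\exp(-\alpha^2C/1000)m}\le\exp(-\alpha^2C/1000)\le 1/100$, uniformly in the (unknown) size of $\Expect{N}$. Replacing your McDiarmid step (and its Lipschitz/certificate bookkeeping, which is only needed in Lemma~\ref{lemma:most-clusters-min-degree}, where a union bound over exponentially many sets $T$ forces an exponentially small failure probability) by this second application of Markov makes your argument coincide with the paper's proof.
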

\begin{proof} 
    By Lemma~\ref{lemma:cluster-min-degree-whp}, we have that $\Prob{\delta_d(H[V_i\cup \{v\}])<  (\delta+2\alpha/3)\binom{|V_i\cup \{v\}|}{k-d}}\leq \exp\left(-\alpha^2C/200\right)$ for any $v\in V$ and any random set $V_i$. Call $v$ \textit{bad} for $V_i$ if this low probability event holds. So for any random set $V_i$, we have $\Expect{|\{v\in V\colon v \text { is bad for } V_i\}}]\leq \exp\left(-\alpha^2C/200\right)n$ by linearity of expectation. By Markov's inequality, for any random set $V_i$, we have $\Prob{|\{v\in V\colon v \text { is bad for } V_i\}|\geq \exp\left(-\alpha^2C/500\right)n} \leq \exp\left(-\alpha^2C/500\right)$. Let $X$ denote the number of $V_i$ such that $|\{v\in V\colon v \text { is bad for } V_i\}|\geq \exp\left(-\alpha^2C/500\right)n$.  By linearity of expectation, $\Expect{X}\leq \exp\left(-\alpha^2C/500\right)m$. By Markov's inequality, $\Prob{X\geq \exp\left(-C\alpha^2/1000\right)m}\leq \exp\left(-C\alpha^2/1000\right)\leq 1/100$, implying the desired statement.
\end{proof}

For every $i \in [m]$, choose some $W'_i \subseteq W_i$ of size $t$ arbitrarily (irrespective of $\pi$), let $T_i \coloneqq \{v_j : \pi(j) \in W'_i\}$, let $N^+_i$ be the set of $V_j \in \{V_1, \dots, V_m\}$ for $j \neq i$ such that $\delta_d(H[V_j \cup T_i]) \geq (\delta + 2\alpha/3)\binom{|V_j \cup T_i|}{k - d}$, and let $N^-_i$ be the set of $V_j$ for $j \neq i$ such that $V_i \in N^+_j$.  Note that $T_i \subseteq V_i$ for all $i \in [m]$.  In order to ensure \ref{size} holds, we will need to have $U_1 = V_1$.  The next lemma ensures that $U_1$ (which will be $V_1)$ satisfies \ref{mindegree} and \ref{minconnectdegree}.
\begin{lemma}\label{lemma:V1-in-degree}
    With probability at least $99/100$, $|N^-_1| \geq (1 - \exp(-\alpha^2C / 500))(m - 1)$.
\end{lemma}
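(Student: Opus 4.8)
The plan is to follow the same template as the proof of Lemma~\ref{lemma:most-clusters-many-vtcs}: bound the expected number of ``bad'' indices by linearity of expectation, and then upgrade this to a high-probability statement via Markov's inequality. The only structural point to verify is that the pairs $(V_1, T_i)$ for $i \in \{2, \dots, m\}$ are legitimate instances of the ``moreover'' clause of Lemma~\ref{lemma:cluster-min-degree-whp}. This holds because $T_i = \{v_j : \pi(j) \in W'_i\}$ with $W'_i \subseteq W_i$ of size $t$, and $W_1, \dots, W_m$ is a partition of $[n]$, so $W_1 \cap W'_i = \emptyset$; hence $V_1$ and $T_i$ are disjoint and $V_1 \cup T_i = \{v_j : \pi(j) \in W_1 \cup W'_i\}$, which is precisely the form to which the ``moreover'' clause applies (with $V_1$ as the cluster and $W = W'_i$, $|W| = t$). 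Note that $|V_1| = r$ being of order $C^2$ rather than $C - 1$ is harmless, since Lemma~\ref{lemma:cluster-min-degree-whp} already includes $V_1$.

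Concretely, first I would invoke the ``moreover'' clause of Lemma~\ref{lemma:cluster-min-degree-whp} to get, for each fixed $i \in \{2, \dots, m\}$,
\begin{equation*}
\Prob{\delta_d(H[V_1 \cup T_i]) < (\delta + 2\alpha/3)\binom{|V_1 \cup T_i|}{k-d}} \leq \exp(-C\alpha^2/200).
\end{equation*}
Call $i$ \emph{bad} when this event occurs and let $Z$ be the number of bad indices in $\{2, \dots, m\}$; by the definition of $N^-_1$ we have $|N^-_1| = (m - 1) - Z$. By linearity of expectation, $\Expect{Z} \leq \exp(-C\alpha^2/200)(m - 1)$, so by Markov's inequality,
\begin{equation*}
\Prob{Z \geq \exp(-\alpha^2 C/500)(m - 1)} \leq \frac{\exp(-C\alpha^2/200)}{\exp(-\alpha^2 C/500)} = \exp\left(-\tfrac{3}{1000}\,\alpha^2 C\right) \leq \frac{1}{100},
\end{equation*}
the last inequality using $1/C \ll \alpha$. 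On the complementary event, which has probability at least $99/100$, we obtain $|N^-_1| = (m-1) - Z > (1 - \exp(-\alpha^2 C/500))(m - 1)$, as required.

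I do not expect any serious obstacle: the argument is a short first-moment computation combined with Markov's inequality, exactly parallel to Lemma~\ref{lemma:most-clusters-many-vtcs}. The only points needing (minor) care are confirming the disjointness of $W_1$ and $W_i$ so that the ``moreover'' clause of Lemma~\ref{lemma:cluster-min-degree-whp} genuinely applies, and checking that the constant-factor slack $\tfrac{1}{200} - \tfrac{1}{500} = \tfrac{3}{1000}$ in the exponent is comfortably absorbed by the hierarchy $1/C \ll \alpha$.
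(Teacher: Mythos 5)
Your proposal is correct and follows essentially the same route as the paper: apply the ``moreover'' clause of Lemma~\ref{lemma:cluster-min-degree-whp} with $V_1$ and the random sets $T_i$, then use linearity of expectation and Markov's inequality, with the slack $\tfrac{1}{200}-\tfrac{1}{500}$ in the exponent absorbed by $1/C \ll \alpha$. The extra check that $W'_i \subseteq W_i$ is disjoint from $W_1$ is a fine (if implicit in the paper) verification and changes nothing substantive.
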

\begin{proof}
    By the ``moreover'' part of Lemma~\ref{lemma:cluster-min-degree-whp}, we have that $$\Prob{\delta_d(H[V_1\cup T_i])<  (\delta+2\alpha/3)\binom{|V_1\cup T_i|}{k-d}}\leq \exp\left(-\alpha^2C/200\right)$$ for every $i \geq 2$, so $$\Prob{V_i \in N^-_1} \geq 1 - \exp\left(-\alpha^2C/200\right).$$ By linearity of expectation, $\Expect{|N^-_1|} \geq (1 - \exp\left(-\alpha^2C/200\right))(m - 1)$.  By Markov's inequality, $$\Prob{m - 1 - |N^-_1| \geq \exp\left(-C\alpha^2/500\right)(m - 1)}\leq \exp\left(-C\alpha^2/500\right)\leq 1/100,$$ implying the desired statement.
\end{proof}

Now we prove Lemma~\ref{lem:randompartitioninglemma}, by considering the distribution on $V_1, \dots, V_m$ conditional on the events in the previous three lemmas and applying the random redistribution argument.

\begin{proof}[Proof of Lemma~\ref{lem:randompartitioninglemma}] 
Let $E_1$ be the event that the property in Lemma~\ref{lemma:most-clusters-min-degree} holds,
let $E_2$ be the event that the property in Lemma~\ref{lemma:most-clusters-many-vtcs} holds,
and let $E_3$ be the event that the property in Lemma~\ref{lemma:V1-in-degree} holds.
We will condition on $E_1 \cap E_2 \cap E_3$, which holds with probability at least $97 / 100$.

For each permutation $\pi : [n] \rightarrow [n]$, define a set of ``bad'' random sets $\mathcal{F}_\pi\subseteq \{V_2,\ldots, V_m\}$ to include the $V_i$ for which we have we have any of 
\begin{enumerate}[label=(A\arabic*)]
    \item\label{bad-set:low-degree} $\delta_d(H[V_i])< (\delta+2\alpha/3)\binom{|V_i|}{k-d}$, 
    \item\label{bad-set:bipartite-degree} $\delta_d(H[V_i\cup\{v\}])<(\delta+2\alpha/3)\binom{|V_i\cup\{v\}|}{k-d}$ for at least $\exp(-\alpha^2C / 500)n$ vertices $v \in V$, and
    \item\label{bad-set:in-degree} $|N^-_i| < (1 - 1/\sqrt{C})m$. 
\end{enumerate}

Let $m'\coloneqq |V_2 \cup \cdots \cup V_m|/C$, and note that $m'$ is a positive integer by the choice of $r$.  If necessary, add extra elements of $\{V_2, \dots, V_m\}$ arbitrarily to $\cF_\pi$ to ensure $\cF_\pi$ has size at least $m-1-m' = m - 1 - m(C - 1)/C = m/C - 1$.  We claim that if $\pi \in E_1 \cap E_2 \cap E_3$, then $|\cF_\pi| = m - 1 - m'$.  Indeed, there are at most $m / C^2$ of type \ref{bad-set:low-degree} by Lemma~\ref{lemma:most-clusters-min-degree} applied with $T=\emptyset$ assuming $\pi \in E_1$, there are at most $\exp\left(-\alpha^2C/1000\right)m$ of type \ref{bad-set:bipartite-degree} by Lemma~\ref{lemma:most-clusters-many-vtcs} assuming $\pi \in E_2$, and there are at most $m / C^{4/3}$ of type \ref{bad-set:in-degree} since $\sum_{i=1}^m|N^-_i| = \sum_{i=1}^m|N^+_i| \geq (1 - 1/C^2)m^2$ by Lemma~\ref{lemma:most-clusters-min-degree} assuming $\pi \in E_1$.

By possibly relabelling the sets $V_1, \dots, V_m$, we assume without loss of generality that $\cF_\pi = \{V_i : i \in [m]\setminus [m +1 - |\cF_\pi|]\}$.  
Now consider random sets $V_i$ given by a random permutation $\pi$ conditioned on $E_1\cap E_2\cap E_3$.

We define a bipartite graph $G_\pi$ between $\bigcup_{V_i \in\cF_\pi}V_i$ and $\{V_2,\ldots, V_m\}\setminus \cF_\pi$. Note that the first part is a set of vertices and the second part is a set of subsets of vertices, and by the choice of $m'$, the first and the second part both have size $m'$, as $(m-1-m')(C-1)=m'$. We put an edge between $v$ and $V_i$ whenever $\delta_d(H[V_i\cup \{v\}])\geq  (\delta+2\alpha/3)\binom{|V_i\cup \{v\}|}{k-d}$. Conditioning on $E_1$ and considering $T = \{v\}$, we have $d_G(v) \geq (1 - 1/C^2)m - |\cF_\pi| \geq 3m' / 4$ for every $v \in \bigcup_{V_i\in \cF_\pi}V_i$.  By the choice of $\cF_\pi$ to contain all sets satisfying \ref{bad-set:bipartite-degree}, for every $V_i \in \{V_2,\ldots, V_m\}\setminus \cF_\pi$, we have $d_G(V_i) \geq m' - \exp(-\alpha^2 C / 500)n \geq 3m' / 4$.  Thus, $\delta(G)\geq 3m'/4$, and by Lemma~\ref{lem:graphmatchings}, there is a $(C_{\ref*{lem:graphmatchings}}/m)$-spread distribution on perfect matchings of $G_\pi$. 

We define the random sets $U_i$ and vertices $u_i$ as follows. First, sample $\pi$ from the uniform distribution on permutations of $[n]$ conditional on $E_1 \cap E_2 \cap E_3$.  Then, sample $M_\pi$ from the $(C_{\ref*{lem:graphmatchings}}/m)$-spread distribution on perfect matchings of $G_\pi$.  For each $V_i \notin \cF_\pi$, let $u_i$ be the vertex that it is assigned to in $M_\pi$, and let $U_i = V_i \cup \{u_i\}$. Letting $U_1 = V_1$, and picking $u_1$ to be an arbitrary vertex in $V_1\setminus T_1$  concludes the algorithm that defines $\mathcal{U} = \{U_1, \dots, U_{m' + 1}\}$ with $m$ in the lemma statement being $m'+1$.  We now check the required conditions.
\begin{enumerate}[label = {(\arabic{enumi}})]
\item The condition on $|U_1|$ holds by construction, and for $i\geq 2$, we have $|U_i| = |V_i| + 1 = C$, as required.
\item For $i=1$, this condition holds because we condition on $E_3$, and for $i\geq 2$, the condition holds by the definition of the edges in the bipartite graph $G_\pi$. 
\item For every $i \in [m]$, let $N_i = \{U_j : V_j \in N^+_i \cap N^-_i\}$. Note that $|N^+_i|\geq (1-1/C^2)m$ since we are conditioning on $E_1$. Note that $|N_1^-|\geq (1-\exp(-\alpha^2C/500))(m-1)$ since we are conditioning on $E_3$.  Note that for $i\in[2, m'+1]$, we have $|N_i^-|\geq (1-1/\sqrt C)m$ since these $V_i$ are good sets and so satisfy the reverse of (3). Combining these gives $|N_i| \geq m' - m / C^2 - m/\sqrt C \geq (1 - \eps)(m' + 1)$. 
For $i\geq 2$, note that  $\delta_d(H[U_i \cup T_j\setminus\{u_i\}]) = \delta_d(H[V_i \cup T_j])$. Also note $\delta_d(H[U_1 \cup T_j\setminus\{u_1\}]) =  \delta_d(H[V_1 \cup T_j]\setminus \{u_1\})\geq \delta_d(H[V_1 \cup T_j])-\binom{|V_1 \cup T_j| - 1}{k - d - 1}\geq \delta_d(H[V_1 \cup T_j]) -  \alpha\binom{|V_1 \cup T_j|}{k - d}/10$. Set $s_{i,j}:=\alpha\binom{|V_i \cup T_j|}{k - d}/10$. We thus have 
\begin{equation*}
    \delta_d(H[U_i \cup T_j\setminus\{u_i\}]) \geq  \delta_d(H[V_i \cup T_j]) - s_{i,j}\geq (\delta + 2\alpha / 3)\binom{|V_i \cup T_j|}{k - d}-s_{i,j} \geq (\delta + \alpha / 2)\binom{|U_i \cup T_j|}{k - d}
\end{equation*}
for $i \geq 1$ and $U_j \in N_i$ because $V_j \in N^+_i$, and similarly
\begin{equation*}
    \delta_d(H[U_j \cup T_i\setminus\{u_j\}]) \geq \delta_d(H[V_j \cup T_i]) -s_{i,j}\geq (\delta + 2\alpha / 3)\binom{|V_j \cup T_i|}{k - d}-s_{i,j} \geq (\delta + \alpha / 2)\binom{|U_j \cup T_i|}{k - d}
\end{equation*}
for $i \geq 1$ and $U_j \in N_i$ because $V_j \in N^-_i$.

\item Let $y_1,\ldots, y_s\in V(H)$, and let $f : [s] \rightarrow [m' + 1]$ as in the statement.  For each $i \in [s]$, let $D_i$ be the event that $y_i \in U_{f(i)}$, let $D_i^1$ be the event $y_i \in V_{f(i)}$, and let $D_i^2$ be the event $y_i = u_i$.  Note that $D_i^1$ is determined only by the permutation $\pi$, and the event $D_i^2$ only holds if $y_i$ is matched to $V_i$ in $G_\pi$ by $M_\pi$.  Thus, for every $S \subseteq [s]$, we have

\begin{equation*}
    \Prob{\bigcap_{i \in S}D^1_i} \leq \left.\left(\frac{C^{2|S|}(n-|S|)!}{n!}\right)\middle/\Prob{{E_1 \cap E_2 \cap E_3}}\right. \leq \frac{100}{97}\left(\frac{eC^2}{n}\right)^{|S|},
\end{equation*}

and for every $\pi' \in E_1 \cap E_2 \cap E_3$,

\begin{equation*}
    \ProbCond{\bigcap_{i \in [s]\setminus S} D_i^2}{\pi = \pi'} \leq \left(\frac{C_{\ref*{lem:graphmatchings}}}{m'}\right)^{s - |S|}.
\end{equation*}
Therefore, for every $S \subseteq [s]$, we have
\begin{equation*}
    \Prob{\bigcap_{i \in S}D_i^1 \cap \bigcap_{i \in [s]\setminus S} D_i^2} = \Prob{\bigcap_{i \in S}D^1_i}\ProbCond{\bigcap_{i \in [s]\setminus [S]} D_i^2}{\bigcap_{i \in S}D_i^1} \leq \frac{100}{97}\left(\frac{eC^2}{n}\right)^{|S|}\left(\frac{C_{\ref*{lem:graphmatchings}}}{m'}\right)^{s - |S|} \leq \left(\frac{C'}{2n}\right)^s.
\end{equation*}
Finally, the result follows by a union bound over the $2^s$ choices of $S \subseteq [s]$, since 
\begin{equation*}
    \bigcap_{i \in [s]}D_i = \bigcap_{i \in [s]}\left(D_i^1 \cup D_i^2\right) = \bigcup_{S \subseteq [s]}\left(\bigcap_{i \in S}D_i^1 \cap \bigcap_{i \in [s]\setminus S} D_i^2 \right).\pushQED{\qed} \qedhere
\end{equation*}

\end{enumerate}
\end{proof}
\section{Proof of Theorems~\ref{thm:main-vertex-spread} and \ref{thm:vtx-spread-for-factors}}

In this section, we prove Theorems~\ref{thm:main-vertex-spread} and \ref{thm:vtx-spread-for-factors}.  Since the proof of Theorem~\ref{thm:main-vertex-spread} is the most challenging, we do not include all of the details for the proof of Theorem~\ref{thm:vtx-spread-for-factors} and instead describe at the end of this section how to modify the proof of Theorem~\ref{thm:main-vertex-spread} to prove Theorem~\ref{thm:vtx-spread-for-factors}.

Recall that a $k$-uniform Hamilton $\ell$-path with $s$ edges occupies $(s-1)(k-\ell) + k= s(k-\ell)+\ell$ vertices. We call a set $S$ $(k,\ell)$-\textit{path-divisible} if $|S|$ has the right divisibility condition to contain a $k$-uniform Hamilton $\ell$-path, that is, $k-\ell$ divides $|S|-\ell$. We call a set $S$ $(k,\ell)$-\textit{internal-path-divisible} if $|S|+2\ell$ has the right divisibility condition to contain a $k$-uniform Hamilton $\ell$-path, so $k-\ell$ divides $|S|+\ell$. We call an integer $q$ path-divisible (or internal-path-divisible) if a set of size $q$ is path-divisible (or internal-path-divisible). Let $f(k, \ell) \coloneqq \lceil (2k - \ell)/(k - \ell)\rceil$, and note that the final $\ell$ vertices of an $\ell$ path with $f(k, \ell)$ edges are disjoint from the first $\ell$.

We now describe the random Hamilton $\ell$-cycle embedding algorithm.
In this algorithm, we first find a Hamilton cycle in $H$ by gluing together many $\ell$-paths.  
We assume each of these paths comes with a natural ordering of its vertices.
For technical reasons, it is more convenient to find an embedding of $C'_{n,k,\ell}$ into $H$, where $C'_{n,k,\ell}$ is the hypergraph isomorphic to $C_{n,k,\ell}$ where edges are shifted to the left by $\ell$; that is, $C'_{n,k,\ell}$ has vertex set $[n]$, $\{n-\ell+1, \ldots, n, 1,\ldots, k-\ell\}\in E(C'_{n,k,\ell})$, and all other edges are ``$(k-\ell)$-shifts'' of this edge.
See Figure~\ref{fig:embedding-algorithm} for an illustration of the embedding algorithm.


\begin{definbox}[Random Hamilton cycle embedding algorithm]\label{def:distribution}
Let $\alpha>0$, let $k \in \mathbb N$, and let $d,\ell \in [k - 1]$.
Let $1 / n \ll 1 / C' \ll 1 / C \ll \alpha, 1/k$ as in Lemma~\ref{lem:randompartitioninglemma}, where $k-\ell$ divides $n$, and $C-(k-\ell)f(k,\ell)-\ell$ is $(k,\ell)$-internal-path divisible, meaning that $k-\ell$ divides $C$.
Given a hypergraph $H$ on $n$ vertices with $\delta_d(H)\geq (\hamconnthresh_{k,\ell, d}+\alpha)\binom{n - d}{k-d}$, we define $\psi : C'_{n,k,\ell} \hookrightarrow H$, a random embedding of a Hamilton $\ell$-cycle in $H$, as follows.

\textbf{Step 1: Sample random clusters.} 
Let $\mathcal U = \{U_1,\ldots, U_m\}$ be a random partition of $V(H)$ obtained by applying Lemma~\ref{lem:randompartitioninglemma} with $10\ell k$ playing the role of $t$, $1/5$ playing the role of $\eps$, and $\hamconnthresh_{k,\ell, d}$ playing the role of $\delta$ (the choice of the other variables is as above).  Define an auxiliary graph $G$ with vertex set $[m]$ where $i$ is adjacent to $j$ if $U_j \in N_i$ or $U_i \in N_j$.  Note that by the choice of $\eps$, we have $\delta(G) \geq 4m/5$.

Let $\phi : C_{m,2,1} \hookrightarrow G$ be a random embedding of a Hamilton cycle on $G$ coming from Lemma~\ref{lem:graphhamcycles}, and note that $\phi$ is a permutation of $[m]$. 
 Throughout, we view $[m]$ cyclically, meaning occurrences of the indices $m+1$ and $0$ are to be read as $1$ and $m$, respectively.
Let $z = \phi^{-1}(1)$ so that $U_1=U_{\phi(z)}$, the largest random set. 

\textbf{Step 2: Find connecting paths between clusters.} 
For each $i\in [m]$, find an $\ell$-path $P_{\phi(i), \phi(i+1)}$ of length $f(k,\ell)$ contained in $U_{\phi(i)}\cup U_{\phi(i+1)}$ with the following properties.
\begin{enumerate}
    \item The paths $P_{\phi(i), \phi(i+1)}$ are pairwise vertex-disjoint.
    \item $P_{\phi(i), \phi(i+1)}$ has its first $\ell$ vertices in $U_{\phi(i)}$, and the last $|V(P_{\phi(i), \phi(i+1)})|-\ell\geq \ell$ vertices in $U_{\phi(i+1)}$ (that is, $P_{\phi(i), \phi(i+1)}$ does not alternate between clusters). 
\end{enumerate}

\textbf{Step 3: Find Hamilton paths in the leftover part of each cluster.} For each $i\in [m]$, call the $\ell$-set that is an endpoint of $P_{\phi(i-1), \phi(i)}$ in $U_{\phi(i)}$ as $S$ and call the $\ell$-set that is an endpoint of $P_{\phi(i), \phi(i+1)}$ in $U_{\phi(i)}$ as $T$. Find a Hamilton $\ell$-path in $H[U_{\phi(i)}\setminus \left(V(P_{\phi(i),\phi(i+1)})\cup V(P_{\phi(i-1),\phi(i)})\right) \cup S \cup T]$ with endpoints $S$ and $T$, and denote this Hamilton path as $P_{\phi(i)}$.

\textbf{Defining $\psi$.} 
Recall that $z = \phi^{-1}(1)$.  To define $\psi : [n] \rightarrow V(H)$, we define $\psi(1)$ to be the first vertex of $P_{\phi(z-1), \phi(z)}$ that is contained in $U_{\phi(z)}$ and define the remaining $\psi(i)$ for $i > 1$ to be consistent with the ordering given by $P_{\phi(1)} \cup P_{\phi(1), \phi(2)} \cup P_{\phi(2)} \cup \cdots \cup  P_{\phi(m)} \cup P_{\phi(m), \phi(1)}$.
\end{definbox}

\begin{figure}
    \centering
    \begin{tikzpicture}[scale=\textwidth/30cm]
        \draw (0,0) ellipse (-12cm and 1cm);
        \node[label=center:{$H$}] at (0, 0) {};

        \draw[->] (0, 1.5) -- (0, 2.5);
        \node[label=right:{Lemma~\ref{lem:randompartitioninglemma}}] at (0, 2) {};
        
        \node (U1) at (-9.5, 5) {};
        \node (U2) at (-5, 5) {};
        \node (Um) at (10, 5) {};
        \draw (U1) circle (2cm);
        \draw (U2) circle (1.5cm);
        \draw (Um) circle (1.5cm);
        \node[label=center:{$U_1$}] at (U1) {};
        \node[label=center:{$U_2$}] at (U2) {};
        \node[label=center:{$U_m$}] at (Um) {};
        \node[label=center:{\Huge$\cdots$}] at ($(U2)!.5!(Um)$) {};

        \node (U'1) at (-9.5, 11) {};
        \node (U'2) at (-5, 11) {};
        \node (U'm) at (10, 11) {};
        \draw (U'1) circle (2cm);
        \draw (U'2) circle (1.5cm);
        \draw (U'm) circle (1.5cm);
        \node[label=center:{$U_{\phi(z)}$}] at (U'1) {};
        \node[label=center:{$U_{\phi(z + 1)}$}] at (U'2) {};
        \node[label=center:{$U_{\phi(z - 1)}$}] at (U'm) {};
        \node[label=center:{\Huge$\cdots$}] at ($(U'2)!.5!(U'm)$) {};

        \draw[->] (-9.5, 7.5) -- (-9.5, 8.5);
        
        \draw[->] (2.5, 7) -- (2.5, 9);
        \draw[->] (0, 7) .. controls (0, 8.5) and (5, 7.5) .. (5, 9);
        \draw[->] (5, 7) .. controls (5, 8.5) and (0, 7.5) .. (0, 9);
        \draw[->] (1.25, 7) .. controls (1.25, 8.5) and (3.75, 7.5) .. (3.75, 9);
        \draw[->] (3.75, 7) .. controls (3.75, 8.5) and (1.25, 7.5) .. (1.25, 9);
        
        \draw (-12, 15) -- (12, 15) to[bend right=7] (-12, 15);
        \node[label=center:{$C'_{n,k,\ell}$}] at (0, 15.4) {};
        \draw[->] (.5, 14) arc[start angle=0, end angle=180,radius=.25cm] -- (0, 13);
        \node[label=right:{$\psi$ (Definition~\ref{def:distribution})}] at (0, 13.5) {};

        \foreach \x in {-12,-7,-3,8,12} \draw(\x,14.9)--(\x,15.1); 

        \draw[decorate, decoration={brace,mirror}, yshift=-2.5ex]  (-11.9, 15) -- node[below=0.5ex] {$W_1$}  (-7.1,15);
        \draw[decorate, decoration={brace,mirror}, yshift=-2.5ex]  (-6.9, 15) -- node[below=0.5ex] {$W_2$}  (-3.1,15);
        \draw[decorate, decoration={brace,mirror}, yshift=-2.5ex]  (8.1, 15) -- node[below=0.5ex] {$W_m$}  (11.9,15);
    \end{tikzpicture}
    \caption{An illustration of the random Hamilton cycle embedding algorithm}
    \label{fig:embedding-algorithm}
\end{figure}
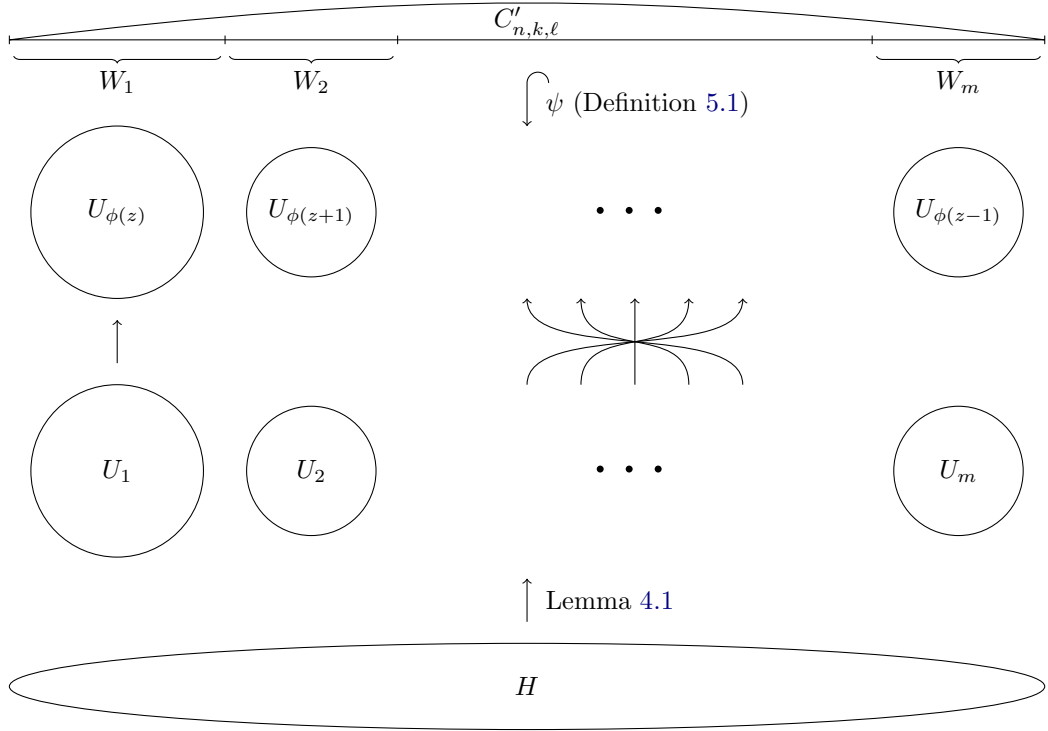

For the next two lemmas, fix $\alpha>0$, $k \in \mathbb N$, $d,\ell \in [k - 1]$, and $1 / n \ll 1 / C' \ll 1 / C \ll \alpha, 1$ as in Definition~\ref{def:distribution}, and let $H$ be an $n$-vertex hypergraph with $\delta_d(H) \geq (\hamconnthresh_{k,\ell,d} + \alpha)\binom{n - d}{k - d}$.
Theorem~\ref{thm:main-vertex-spread} follows immediately from the following two lemmas (note that producing embeddings of $C'_{n,k,\ell}$ is equivalent to producing those of $C_{n,k,\ell}$).
\begin{lemma}\label{lemma:distribution-well-defined}
    The random embedding $\psi : C'_{n,k,\ell}\hookrightarrow H$ in Definition~\ref{def:distribution} is well-defined.
\end{lemma}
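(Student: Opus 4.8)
The plan is to verify that each of the three steps in Definition~\ref{def:distribution} can actually be carried out, given that the preceding steps succeeded; the point of "well-defined" is precisely that none of the greedy choices get stuck, and that the divisibility bookkeeping works out so that Step 3 asks for Hamilton $\ell$-paths in sets of the correct size. Step 1 is immediate: Lemma~\ref{lem:randompartitioninglemma} produces the partition $\mathcal U$ with the stated properties, property~\ref{minconnectdegree} together with the choice $\eps = 1/5$ gives $\delta(G)\geq 4m/5 \geq 3m/4$, and since $G$ is a subgraph of $K_m$ missing at most $\eps m \cdot m = m^2/5$ edges we may apply Lemma~\ref{lem:graphhamcycles} (after checking $1/C_{\ref*{lem:graphhamcycles}}$ against the missing-edge density, which holds as $1/C' \ll 1/C \ll 1$ and $m = \Theta(n/C)$) to obtain the random Hamilton cycle $\phi$ on $G$; relabelling so $z = \phi^{-1}(1)$ is cosmetic.

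For Step 2, I would process the clusters in cyclic order $U_{\phi(1)}, U_{\phi(2)}, \dots$ and greedily build the connecting paths $P_{\phi(i),\phi(i+1)}$ one at a time. When it is time to build $P_{\phi(i),\phi(i+1)}$, at most $2f(k,\ell)(k-\ell) = O(k)$ vertices of $U_{\phi(i)} \cup U_{\phi(i+1)}$ have been used by the two previously-constructed incident paths, which is a negligible fraction of $|U_{\phi(i)}|, |U_{\phi(i+1)}| \in \{C, (C-1)C + (n \bmod (C-1)C)\}$ since $1/C \ll \alpha$. Since $i \sim_G i+1$, one of $U_{\phi(i+1)} \in N_{\phi(i)}$ or $U_{\phi(i)} \in N_{\phi(i+1)}$ holds; in the former case property~\ref{minconnectdegree} gives a set $T_{\phi(i)} \subseteq U_{\phi(i)}$ of size $t = 10\ell k \geq \ell$ with $\delta_d(H[U_{\phi(i+1)} \cup T_{\phi(i)}])$ large, so after deleting the already-used vertices the minimum degree is still at least $(\hamconnthresh_{k,\ell,d} + \alpha/3)\binom{\cdot}{k-d}$, and we use the defining property of $\hamconnthresh_{k,\ell,d}$ (with the first $\ell$ vertices rooted inside $T_{\phi(i)}$, hence inside $U_{\phi(i)}$, and the last $\ell$ rooted in $U_{\phi(i+1)}$) to find an $\ell$-path with $f(k,\ell)$ edges; the choice of $f(k,\ell) = \lceil (2k-\ell)/(k-\ell)\rceil$ guarantees the last $\ell$ vertices are disjoint from the first $\ell$, so property~(2) of Step 2 holds, and by construction they avoid all earlier paths, so property~(1) holds. (The symmetric case is identical with the roles swapped.) I would note that the last path $P_{\phi(m),\phi(1)}$ is handled the same way — the cyclic wrap-around uses no new idea since only two incident paths ever touch a given cluster.

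For Step 3, the task is to find, for each $i$, a Hamilton $\ell$-path in $H[U_{\phi(i)} \setminus (V(P_{\phi(i),\phi(i+1)}) \cup V(P_{\phi(i-1),\phi(i)})) \cup S \cup T]$ with prescribed endpoints $S, T$, where $S$ and $T$ are the $\ell$-set endpoints of the two incident connecting paths lying in $U_{\phi(i)}$. Here I would (a) check divisibility: $|U_{\phi(i)}|$ is $C$ (or the exceptional value, which is $\equiv C \pmod{k-\ell}$ by the choice of $r$ in Lemma~\ref{lem:randompartitioninglemma} and $k-\ell \mid C$, $k - \ell \mid n$), and each connecting path removes $(k-\ell)f(k,\ell)+\ell - \ell = (k-\ell)f(k,\ell)$ internal vertices from $U_{\phi(i)}$ beyond its $\ell$-set endpoint when $\ell$ vertices are returned as $S$ or $T$ — one needs to unwind that the set in question has size $\equiv C - 2(k-\ell)f(k,\ell) - \ell + 2\ell \equiv C + \ell \pmod{k-\ell}$, which is $(k,\ell)$-path-divisible since $k - \ell \mid C$; this is exactly what the phrase "$C - (k-\ell)f(k,\ell) - \ell$ is $(k,\ell)$-internal-path divisible" in Definition~\ref{def:distribution} encodes, so the divisibility is correct; and (b) invoke $\hamconnthresh_{k,\ell,d}$ once more, after checking that deleting $O(k)$ vertices from $U_{\phi(i)}$ leaves minimum $d$-degree at least $(\hamconnthresh_{k,\ell,d}+\alpha/4)\binom{\cdot}{k-d}$ — which holds by property~\ref{mindegree} of Lemma~\ref{lem:randompartitioninglemma} and $1/C \ll \alpha$ — to obtain the Hamilton $\ell$-path with endpoints $S$ and $T$. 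Finally, gluing $P_{\phi(1)}\cup P_{\phi(1),\phi(2)}\cup P_{\phi(2)}\cup\cdots$ produces a closed $\ell$-cycle on all of $V(H)$ because the pieces partition $V(H)$ and consecutive pieces share exactly their common $\ell$-set endpoint, so $\psi$ is a well-defined embedding $C'_{n,k,\ell}\hookrightarrow H$ (the left-shift by $\ell$ in the definition of $C'$ is precisely what makes the edge straddling the $S$/$T$ junctions a genuine edge of $H$).

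The main obstacle is the divisibility bookkeeping in Step 3 — making sure that after removing the two connecting paths and re-attaching their $\ell$-set ends as $S$ and $T$, the resulting vertex set has size in the residue class mod $k-\ell$ that admits a Hamilton $\ell$-path with those endpoints; everything else is a routine "delete $O(k)$ vertices from a dense cluster, apply the black-box threshold" argument. One should double-check that the exceptional cluster $U_1 = U_{\phi(z)}$, whose size is $(C-1)C + (n \bmod (C-1)C)$, satisfies the same divisibility, which follows because $(C-1)C \equiv 0$ and $n \equiv 0 \pmod{k-\ell}$ so its size is $\equiv 0 \equiv C \pmod{k-\ell}$, matching the generic clusters.
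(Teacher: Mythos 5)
Your overall strategy coincides with the paper's proof: check that each step of Definition~\ref{def:distribution} can be executed, with the divisibility bookkeeping carried by the choice of $C$ and the size of the exceptional cluster, and then observe that gluing the pieces (together with the left-shift in $C'_{n,k,\ell}$) gives a genuine embedding. Your treatment of Step~1 and of Step~3 matches the paper (your exact vertex count in Step~3 is slightly off, but the residue modulo $k-\ell$ and hence the divisibility conclusion are correct, including for the exceptional cluster).

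There is, however, a genuine gap in your Step~2. You claim to ``use the defining property of $\hamconnthresh_{k,\ell,d}$ \dots\ to find an $\ell$-path with $f(k,\ell)$ edges'' inside $U_{\phi(i)}\cup U_{\phi(i+1)}$. The definition of $\hamconnthresh_{k,\ell,d}$ only supplies \emph{Hamilton} (spanning) $\ell$-paths of the hypergraph it is applied to, with prescribed first and last $\ell$-sets; it cannot be applied to a constant-size vertex set to produce a short path, and applied to the natural large set $U_{\phi(i+1)}\cup T_{\phi(i)}$ (minus used vertices) it produces a path with far more than $f(k,\ell)$ edges whose early edges may moreover contain the surplus $t-\ell$ vertices of $T_{\phi(i)}\subseteq U_{\phi(i)}$, so property~(2) of Step~2 (no alternation between clusters) is not guaranteed by your argument. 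The missing idea, which is how the paper proceeds, is to first pass to a single $\ell$-subset $T'\subseteq T_{\phi(i)}$ avoiding previously used vertices, delete $u_{\phi(i+1)}$ and at most $k$ further vertices of $U_{\phi(i+1)}$ so that $U\cup T'$ is $(k,\ell)$-path-divisible (while the minimum-degree condition from Lemma~\ref{lem:randompartitioninglemma}\ref{minconnectdegree} survives, up to a slightly smaller $\alpha$-slack), apply Hamilton connectivity to $H[U\cup T']$ with endpoint $T'$, and then \emph{truncate} the resulting spanning path to its first $f(k,\ell)$ edges. Without this trim-and-truncate step, Step~2 as you describe it does not go through; with it, your proof becomes essentially the paper's.
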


\begin{lemma}\label{lemma:distribution-is-spread}
    The distribution produced by the algorithm in Definition~\ref{def:distribution} is $(C' C_{\ref*{lem:graphhamcycles}}/n)$-vertex-spread.
\end{lemma}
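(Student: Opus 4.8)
The plan is to decompose the randomness of $\psi$ into three independent stages --- the random permutation $\pi$ underlying the clustering (and the redistribution matching), the random Hamilton cycle $\phi$ of the auxiliary graph $G$, and the (deterministic, adversarial) choices in Steps 2 and 3 --- and to bound, for a fixed sequence of distinct vertices $x_1,\dots,x_s \in [n]$ and target vertices $y_1,\dots,y_s \in V(H)$, the probability that $\psi(x_i)=y_i$ for all $i$. The key structural observation (cf.\ Observation~\ref{obs:windows}) is that because every non-exceptional cluster $U_{\phi(i)}$ has the \emph{same} size $C$, the position $x_i \in [n]$ in the cyclic order of $P_{\phi(1)}\cup P_{\phi(1),\phi(2)}\cup\cdots$ determines, via $\phi$ alone, which cluster index $j=j(x_i)$ satisfies $y_i \in U_{\phi(j)}$ --- more precisely, $x_i$ falls in a ``window'' of $O(C)$ consecutive positions that are embedded into a single cluster $U_{\phi(j)}$ (together with the $O(1)$ connecting-path vertices), and the map from $x_i$ to this window is a fixed function of $\phi$. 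So the event $\bigcap_i \{\psi(x_i)=y_i\}$ forces, for each $i$, that $y_i$ lies in the cluster $U_{\phi(j(x_i))}$ indexed by where $x_i$ lands.

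Concretely, I would first condition on $\phi$. Given $\phi$, each $x_i$ determines a window, and the window determines an index $a_i \in [m]$ with $U_{a_i} = U_{\phi(j(x_i))}$; the event $\psi(x_i) = y_i$ can only hold if $y_i \in U_{a_i}$. Grouping the indices $i$ by the value $a_i$, this is exactly an event of the form ``$y_i \in U_{f(i)}$ for each $i$'' for the function $f(i) = a_i$, after discarding any group where two distinct $y_i, y_{i'}$ are required to be the \emph{same} vertex of a cluster (impossible, probability $0$) --- here one uses that the $x_i$ are distinct so at most $|V(P)| = O(C)$ of them can land in the same cluster, which is harmless. Wait: more carefully, I only need that distinct $x_i$ in the same window map to distinct vertices of that cluster under $\psi$, which holds since $\psi$ is injective; so if $y_i = y_{i'}$ for $i \neq i'$ in the same window the probability is $0$, and otherwise we get a genuine constraint $y_i \in U_{a_i}$ for each $i$ with the $a_i$'s allowed to repeat. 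Applying property~\ref{spreadness} of Lemma~\ref{lem:randompartitioninglemma}, the conditional probability given $\phi$ of $\bigcap_i \{y_i \in U_{a_i}\}$ is at most $(C'/n)^s$.

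Finally I would remove the conditioning on $\phi$. The subtlety is that the function $i \mapsto a_i$ depends on $\phi$, so different outcomes of $\phi$ give different constraint systems; but for \emph{every} outcome of $\phi$ the conditional bound $(C'/n)^s$ holds (the clustering $\mathcal U$ is sampled before $\phi$ and property~\ref{spreadness} is a statement about the distribution of $\mathcal U$ alone, which is unaffected by further conditioning on the independent randomness $\phi$). Hence by the law of total probability $\Prob{\psi(x_i)=y_i\ \forall i} \le (C'/n)^s$. To land the claimed constant $C'C_{\ref*{lem:graphhamcycles}}/n$ rather than $C'/n$, I would instead keep a layer of slack: bound $\Prob{\bigcap_i\{y_i\in U_{a_i}\}}$ using~\ref{spreadness} with the conditioning on $E_1 \cap E_2 \cap E_3$ already absorbed into $C'$, and note the factor $C_{\ref*{lem:graphhamcycles}}$ is the price of conditioning $\phi$ to be a Hamilton cycle of $G$ (it appears only if one needs to extract an additional constraint from $\phi$ itself, e.g.\ when some $x_i$ lands exactly at position $1$ and pins down $\phi(z)$); this matches how Lemma~\ref{lem:graphhamcycles}'s spreadness is quoted. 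The main obstacle is making the ``window'' bookkeeping precise: one must verify that the lengths of the connecting paths $P_{\phi(i),\phi(i+1)}$ and the leftover Hamilton paths $P_{\phi(i)}$ are determined (as integers) independently of all the adversarial choices in Steps 2--3 --- which holds because $|V(P_{\phi(i),\phi(i+1)})| = (k-\ell)f(k,\ell)+\ell$ is a constant and each $|U_{\phi(i)}|$ is the constant $C$ (or the fixed size of $U_1$) --- so that the position-to-cluster-index map genuinely factors through $\phi$ and does not leak information about Steps 2--3. Once that is nailed down, the rest is the two-line union/law-of-total-probability argument above.
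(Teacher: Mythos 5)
Your window bookkeeping and the reduction of the event $\{\psi(x_i)=y_i\ \forall i\}$ to cluster-membership constraints of the form $y_i\in U_{\phi(z+w(i)-1)}$ is exactly right (this is the paper's Observation~\ref{obs:windows}, and your point that it relies on all non-exceptional clusters having the same size $C$ and the connecting paths having constant length is well taken). But the step where you remove the conditioning on $\phi$ contains a genuine gap: you assert that $\phi$ is ``independent randomness'' so that property~\ref{spreadness} of Lemma~\ref{lem:randompartitioninglemma} still bounds $\Prob{y_i\in U_{a_i}\ \forall i\mid \phi}$ by $(C'/n)^s$. This is false. The Hamilton cycle $\phi$ is sampled (via Lemma~\ref{lem:graphhamcycles}) from a distribution on Hamilton cycles of the auxiliary graph $G$, and $G$ is a function of the clustering $\mathcal U$; hence $\phi$ and $\mathcal U$ are correlated, and conditioning on a realized value $\phi=\phi_0$ reweights the law of $\mathcal U$ (proportionally to $\mu_{G(\mathcal U)}(\phi_0)$), so the vertex-spread guarantee for $\mathcal U$, which is a statement about its \emph{marginal} law, no longer applies. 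The alternative reading—conditioning on the auxiliary randomness used to generate $\phi$ rather than on its value—does not help either: then the indices $a_i=\phi(z+w(i)-1)$ are themselves $\mathcal U$-measurable random variables, so the event is not of the form ``$y_i\in U_{f(i)}$ for a fixed $f$'' and property~\ref{spreadness} again cannot be invoked. Note also that if your independence claim were correct you would obtain the bound $(C'/n)^s$ with no factor $C_{\ref*{lem:graphhamcycles}}$ at all, which is a sign that the dependence is being swept under the rug; your suggested explanation of where $C_{\ref*{lem:graphhamcycles}}$ enters (only when some $x_i$ pins down $\phi(z)$) is not how the constant actually arises.

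The paper resolves precisely this issue by conditioning in the opposite order. It partitions the probability space according to the $\mathcal U$-measurable events $E_f=\{y_i\in U_{f(i)}\ \forall i\}$, restricts to the family $\cF$ of allocation functions compatible with the window structure (all others give conditional probability $0$ by Observation~\ref{obs:windows}), bounds $\Prob{E_f}\le (C'/n)^s$ \emph{unconditionally} by Lemma~\ref{lem:randompartitioninglemma}\ref{spreadness}, and then bounds $\ProbCond{\psi(x_i)=y_i\ \forall i}{E_f}\le (C_{\ref*{lem:graphhamcycles}}/m)^{\hat b}$ using the vertex-spreadness of $\phi$, which does hold conditionally on any outcome of $\mathcal U$. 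The factor $(C_{\ref*{lem:graphhamcycles}}/m)^{\hat b}$ is what pays for the union over the $\le m^{\hat b}$ functions in $\cF$, and since $\hat b\le s$ this yields $(C'C_{\ref*{lem:graphhamcycles}}/n)^s$. Repairing your argument essentially forces you into this structure, so as written the proposal does not constitute a proof.
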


\begin{proof}[Proof of Lemma~\ref{lemma:distribution-well-defined}]
By construction, $P_{\phi(1)} \cup P_{\phi(1), \phi(2)} \cup P_{\phi(2)} \cup \cdots \cup  P_{\phi(m)} \cup P_{\phi(m), \phi(1)}$ is a Hamilton $\ell$-cycle of $H$. Furthermore, by Step 2, the first vertex of $P_{\phi(z-1), \phi(z)}$ that is contained in $U_{\phi(z)}$ is in fact the $(\ell+1)$th vertex of $P_{\phi(z-1), \phi(z)}$, so 
$\psi$ is an embedding of $C'_{n,k,\ell}$ in $H$, as required.  Therefore it suffices to show that each step in Definition~\ref{def:distribution} can be performed. 

\textbf{Step 1.}  
This step can be performed because of Lemmas~\ref{lem:randompartitioninglemma} and \ref{lem:graphhamcycles}.

\textbf{Step 2.} Let $i\in [m]$ and suppose paths satisfying the two properties have been found for each $i'<i$, and let $J$ denote the set of vertices used by these previously found paths. Consider $U_{\phi(i)}$ and $U_{\phi(i+1)}$. Note that from each set, $J$ uses at most $\ell + (k - \ell)f(k,\ell)\leq 3k$ vertices. Consider the $T_{\phi(i)}\subseteq U_{\phi(i)}$ guaranteed by Lemma~\ref{lem:randompartitioninglemma}\ref{minconnectdegree}. Note that $t-3k\geq \ell$, so we may fix an $\ell$-subset $T'\subseteq T_{\phi(i)}$ disjoint from $J$. Let $U\subseteq U_{\phi(i+1)}\setminus (J\cup\{u_{\phi(i+1)}\})$ be obtained by deleting at most $k$ elements from the latter superset so that $U\cup T'$ is $(k,\ell)$-path divisible.
Observe that $\delta_{d}(H[U\cup T'])\geq (\delta_{k,\ell,d}^{\mathrm{CON}}+\alpha/10)\binom{|U\cup T'|}{k-d}$ (by Lemma~\ref{lem:randompartitioninglemma}\ref{minconnectdegree}); hence, by definition of $\delta_{k,\ell,d}^{\mathrm{CON}}$, $H[U\cup T']$ contains a Hamilton $\ell$-path with one endpoint being the $\ell$-set $T'$, and the other endpoint being some $\ell$-subset of $U$ that can be chosen arbitrarily. 
We truncate this path (keeping the side closer to $T'$ intact) to keep only its first $f(k,\ell)$ edges. The resulting path satisfies the desired properties, showing that Step $2$ can be executed for each $i\in [m]$.
\par \textbf{Step 3.} Similarly to the previous step, we need to check the relevant divisibility and minimum degree conditions to show that a path of the desired form exists for each $i\in [m]$.

We first show that $U_{\phi(i)}\setminus (V(P_{\phi(i),\phi(i+1)})\cup V(P_{\phi(i-1),\phi(i)}))$ is $(k,\ell)$-internal-path-divisible for every $i \in [n]$. Indeed, for $i \neq \phi^{-1}(1)$, we have $|U_{\phi(i)}\setminus (V(P_{\phi(i),\phi(i+1)})\cup V(P_{\phi(i-1),\phi(i)}))| = C - \ell - f(k,\ell)(k - \ell)$, so the internal path divisibility follows from our choice of $C$.  For $i = \phi^{-1}(1)$, we have $|U_{\phi(i)}\setminus (V(P_{\phi(i),\phi(i+1)})\cup V(P_{\phi(i-1),\phi(i)}))| = n - \ell - f(k,\ell)(k - \ell) - (m - 1)C$, so the internal path divisibility follows since $k - \ell$ divides both $n$ and $C$.

Now we claim that $\delta_d(H[U_{\phi(i)}\setminus (V(P_{\phi(i),\phi(i+1)})\cup V(P_{\phi(i-1),\phi(i)}))])\geq (\delta_{k,\ell,d}+\alpha/10)\binom{|U_{\phi(i)}|}{k-d}$. This is a consequence of Lemma~\ref{lem:randompartitioninglemma}\ref{mindegree} since $|V(P_{\phi(i),\phi(i+1)})\cup V(P_{\phi(i-1),\phi(i)})|\leq 10k$, and $1/C\ll 1/k$.  Therefore, the desired Hamilton path with the desired endpoints exists by the definition of $\delta_{k,\ell,d}^{\mathrm{CON}}$, meaning that Step $3$ can be executed.
\end{proof}

\begin{proof}[Proof of Lemma~\ref{lemma:distribution-is-spread}]
Let $\psi : C'_{n,k,\ell} \hookrightarrow H$ be the random embedding from Definition~\ref{def:distribution}.  To show the distribution is $(C'C_{\ref*{lem:graphhamcycles}}/n)$-vertex-spread, we show that for every $s\in [n]$ and every two sequences of distinct $x_1, \dots, x_s  \subseteq [n]$ and $y_1, \dots, y_s \in V(H)$, we have $\Prob{\psi(x_i) = y_i \text{ for all } i\in [s]} \leq (C'C_{\ref*{lem:graphhamcycles}}/n)^s$.

To that end, let $r$ be $(C - 1)C$ plus the remainder when $n$ is divided by $(C-1)C$. Let $W_1 \coloneqq [r]$, and for $i\geq 2$, let $W_i \coloneqq  [r + C(i - 2)+1, r + C(i - 1)] = [r + C(i - 1)] \setminus \bigcup_{j=1}^{i-1} W_j$.  (Note that these are slightly different from those defined in the previous section.)  
We refer to each $W_i$ as a ``window''.   
Let $w : [s] \rightarrow [m]$ where $w(i)$ is the unique index such that $x_i \in W_{w(i)}$. 
We refer to $W_{w(i)}$ as ``$x_i$'s window''.  
Recall $\mathcal{U} = \{U_1, \dots, U_m\}$ is the collection of random sets and $\phi$ is the random embedding of $C_{m,2,1}$ in the auxiliary graph $G$ as defined in Step 1, and $z = \phi^{-1}(1)$.  See Figure~\ref{fig:embedding-algorithm}.  We will frequently use the following.
\begin{observation}\label{obs:windows}
    The embedding $\psi$ induces bijections between and $W_{i+1}$ and $U_{\phi(z+i)}$ for each $i\geq 0$ (where the subscripts of $W$ and the inputs to $\phi$ are to be interpreted cyclically).  In particular, for every $i \in [s]$, we have $\psi(x_i) = y_i$ only if $y_i \in U_{\phi(z + w(i) - 1)}$.
\end{observation}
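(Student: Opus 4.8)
The plan is to determine, directly from the construction in Definition~\ref{def:distribution}, exactly which vertex of $H$ each coordinate $\psi(j)$ equals, by tracing the cyclic traversal of the Hamilton $\ell$-cycle $\mathcal C = P_{\phi(1)} \cup P_{\phi(1),\phi(2)} \cup P_{\phi(2)} \cup \cdots \cup P_{\phi(m)} \cup P_{\phi(m),\phi(1)}$. Two structural facts about Steps~2 and~3 drive everything. First, each path $P_{\phi(i)}$ from Step~3 lies entirely inside $U_{\phi(i)}$: its vertex set is $\big(U_{\phi(i)}\setminus(V(P_{\phi(i),\phi(i+1)})\cup V(P_{\phi(i-1),\phi(i)}))\big)\cup S\cup T$ with $S,T\subseteq U_{\phi(i)}$. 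Second, each connecting path $P_{\phi(i),\phi(i+1)}$ from Step~2 has its first $\ell$ vertices in $U_{\phi(i)}$ and its remaining $f(k,\ell)(k-\ell)$ vertices in $U_{\phi(i+1)}$ by Property~(2), so it does not alternate between clusters.

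First I would prove that, reading $\mathcal C$ cyclically in the natural order $\dots, P_{\phi(i-1),\phi(i)}, P_{\phi(i)}, P_{\phi(i),\phi(i+1)}, \dots$, the vertices lying in a fixed cluster $U_{\phi(i)}$ form a single contiguous arc $A_{\phi(i)}$. This arc starts at the $(\ell+1)$-th vertex of $P_{\phi(i-1),\phi(i)}$ (the first vertex of that connecting path inside $U_{\phi(i)}$), runs through the remaining $f(k,\ell)(k-\ell)$ vertices of $P_{\phi(i-1),\phi(i)}$, continues through all of $P_{\phi(i)}$ — its initial $\ell$-set $S$ is the terminal $\ell$-set of $P_{\phi(i-1),\phi(i)}$ and its terminal $\ell$-set $T$ is the initial $\ell$-set of $P_{\phi(i),\phi(i+1)}$, so nothing is double counted — and then stops exactly where $P_{\phi(i),\phi(i+1)}$ passes from $U_{\phi(i)}$ into $U_{\phi(i+1)}$. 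A short count using $|V(P_{\phi(i),\phi(i+1)})| = f(k,\ell)(k-\ell)+\ell$, hence $|V(P_{\phi(i)})| = |U_{\phi(i)}| - f(k,\ell)(k-\ell) + \ell$, gives $|A_{\phi(i)}| = |U_{\phi(i)}|$; since every vertex of $U_{\phi(i)}$ does lie on one of $P_{\phi(i-1),\phi(i)}$, $P_{\phi(i)}$, $P_{\phi(i),\phi(i+1)}$, this forces $A_{\phi(i)} = U_{\phi(i)}$ as sets. Hence the arcs $A_{\phi(z)}, A_{\phi(z+1)}, \dots, A_{\phi(z+m-1)}$ partition $V(H)$ and appear in $\mathcal C$ in this cyclic order, with $A_{\phi(z+i)}$ directly following $A_{\phi(z+i-1)}$.

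Then I would line this up with the definition of $\psi$. By Definition~\ref{def:distribution}, $\psi(1)$ is the first vertex of $P_{\phi(z-1),\phi(z)}$ in $U_{\phi(z)}$, which (as recorded in the proof of Lemma~\ref{lemma:distribution-well-defined}) is the $(\ell+1)$-th vertex of $P_{\phi(z-1),\phi(z)}$, i.e.\ precisely the starting vertex of the arc $A_{\phi(z)}$; and $\psi(2), \psi(3), \dots$ continue along $\mathcal C$ in the direction of the concatenation. Therefore $\psi$ lists first the $|U_{\phi(z)}| = |U_1| = r$ vertices of $A_{\phi(z)}$, then the $|U_{\phi(z+1)}| = C$ vertices of $A_{\phi(z+1)}$, and so on, so it restricts to a bijection from $W_1 = [r]$ onto $U_{\phi(z)}$ and from $W_{i+1} = [r+C(i-1)+1,\, r+Ci]$ onto $U_{\phi(z+i)}$ for each $1 \le i \le m-1$ (subscripts read cyclically). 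This is exactly the claimed family of bijections, and the ``in particular'' clause follows at once: if $x_i \in W_{w(i)}$ then $\psi(x_i) \in U_{\phi(z+w(i)-1)}$, so $\psi(x_i) = y_i$ is possible only when $y_i \in U_{\phi(z+w(i)-1)}$.

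I expect no real difficulty here; the argument is bookkeeping. The only point requiring care is the off-by-$\ell$ at the arc boundaries: one has to combine Property~(2) of Step~2 with the exact definition of $\psi(1)$ to see that the arc $A_{\phi(z)}$ begins at $\psi(1)$ (not $\ell$ vertices earlier or later), and one must make sure the shared $\ell$-sets $S$ and $T$ of consecutive sub-paths are each counted once, so that $|A_{\phi(i)}|$ comes out equal to $|U_{\phi(i)}|$ exactly and the arc decomposition of $\mathcal C$ matches the window decomposition $W_1, \dots, W_m$ of $[n]$.
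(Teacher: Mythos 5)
Your argument is correct and is exactly the bookkeeping that the paper treats as immediate from the construction: the paper states Observation~\ref{obs:windows} without proof, relying on the facts you spell out (each connecting path $P_{\phi(i),\phi(i+1)}$ does not alternate between clusters, each $P_{\phi(i)}$ stays inside $U_{\phi(i)}$, and $\psi(1)$ is the $(\ell+1)$-th vertex of $P_{\phi(z-1),\phi(z)}$, i.e.\ the start of the arc of the cycle lying in $U_{\phi(z)}$). Your arc decomposition and the size count matching $|W_1|=r=|U_1|$ and $|W_{i+1}|=C=|U_{\phi(z+i)}|$ are precisely the intended justification, so there is nothing to correct.
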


For each function $f : [s]\rightarrow [m]$, let $E_f$ be the event that $y_i\in U_{f(i)}$ for each $i \in [s]$.
Let $\cF$ be the family of functions $f\colon [s]\to [m]$ satisfying the following:
\begin{itemize}
    \item If $w(i) = 1$ (that is, $x_i$ is in the biggest window), then $f(i) = 1$.
    \item If $w(i) = w(j)$ (that is, $x_i$ and $x_j$ are in the same window), then $f(i) =  f(j)$.
    \item If $w(i) \neq w(j)$ (that is, $x_i$ and $x_j$ are in different windows), then $f(i) \neq f(j)$.
\end{itemize}
Let $b$ denote the number of $i \in [m]$ for which $\{x_1, \dots, x_s\} \cap W_i \neq \emptyset$; that is, $b$ is the number of windows that are the window of some $x_i$ for $i \in [s]$. 
Define $\hat{b}$ as $b$ if $W_1\cap \{x_1, \dots, x_s\}=\emptyset$ and define $\hat{b}$ as $b-1$ otherwise.

We claim the following:
\begin{enumerate}[label=(\alph*)]
    \item\label{eqn:cluster-allocation-probability} 
    $\Prob{E_f} \leq (C' / n)^s$ for every  $f : [s]\rightarrow[m]$;
    \item \label{eqn:compatibility}
    if $f : [s] \rightarrow [m]$ is not in $\cF$, 
        then $\ProbCond{\psi(x_i) = y_i \text{ for all } i \in [s]}{E_f}=0$;
    \item \label{eqn:number-compatible}
    $|\cF| = \prod_{i=0}^{\hat{b}-1}(m - i) \leq m^{\hat{b}}$;
    \item \label{eqn:hat-b-bound}
    $\hat{b}\leq b \leq s$;
    \item \label{claim:boundprob}
    $\ProbCond{\psi(x_i) = y_i \text{ for all } i \in [s]}{E_f}\leq (C_{\ref*{lem:graphhamcycles}}/m)^{\hat b}$ for every $f\in \mathcal{F}$.
\end{enumerate}

Indeed, \ref{eqn:cluster-allocation-probability} follows immediately from Lemma~\ref{lem:randompartitioninglemma}\ref{spreadness}, \ref{eqn:compatibility} follows from Observation~\ref{obs:windows}, and \ref{eqn:number-compatible} and \ref{eqn:hat-b-bound} are straightforward from the definitions.
To prove \ref{claim:boundprob}, first note that by Observation~\ref{obs:windows},
\begin{equation*}
    \ProbCond{\psi(x_i) = y_i \text{ for all } i \in [s]}{E_f}
    \leq \Prob{\phi(z + w(i) - 1) = f(i) \text{ for all } i\in[s]}.
\end{equation*}
        
Let $x'_1, \dots, x'_{\hat b}$ and $y'_1, \dots, y'_{\hat b}$ be the elements of $\{z + w(i) - 1 : i \in [s]\}\setminus \{z\}$ and $\{f(i) : i \in [s]\}\setminus \{1\}$, respectively.  
By the vertex spreadness of $\phi$ with $x'_1, \dots, x'_{\hat b}$ and $y'_1, \dots, y'_{\hat b}$ playing the roles of $\{x_1, \dots, x_{s}\}$ and $\{y_1, \dots, y_{s}\}$, respectively, we have
\begin{equation*}
    \Prob{\phi(z + w(i) - 1) = f(i) \text{ for all } i\in[s]}\leq (C_{\ref*{lem:graphhamcycles}}/m)^{\hat{b}}.
\end{equation*}
The two inequalities above together imply \ref{claim:boundprob}.

Combining \ref{eqn:cluster-allocation-probability}--\ref{claim:boundprob}, we have
\begin{align*}
    \Prob{\psi(x_i) = y_i~\forall i \in [s]} 
    &\stackrel{\ref{eqn:compatibility}}{=} \sum_{f\in \mathcal{F}}
    \ProbCond{\psi(x_i) = y_i~\forall i \in [s]}{E_f}\Prob{E_f}\\ 
    &\stackrel{\ref{eqn:cluster-allocation-probability}}{\leq}
    \sum_{f\in \mathcal{F}}\ProbCond{\psi(x_i) = y_i~\forall i \in [s]}{E_f} \left(\frac{C'}{n}\right)^s \\
    &\stackrel{\text{\ref{claim:boundprob}}}{\leq} \left(\frac{C'}{n}\right)^s\sum_{f\in \mathcal{F}} \left(\frac{C_{\ref*{lem:graphhamcycles}}}{m}\right)^{\hat{b}} =  \left(\frac{C'}{n}\right)^s \left|\mathcal{F}\right| \left(\frac{C_{\ref*{lem:graphhamcycles}}}{m}\right)^{\hat{b}} \\
    &\stackrel{\ref{eqn:number-compatible}}{\leq} \left(\frac{C'}{n}\right)^s m^{\hat{b}}\left(\frac{C_{\ref*{lem:graphhamcycles}}}{m}\right)^{\hat{b}} 
    = \left(\frac{C'}{n}\right)^s (C_{\ref*{lem:graphhamcycles}})^{\hat{b}}\\
    &\stackrel{\ref{eqn:hat-b-bound}}{\leq} \left(\frac{C'}{n}\right)^s (C_{\ref*{lem:graphhamcycles}})^s= \left(\frac{C'C_{\ref*{lem:graphhamcycles}}}{n}\right)^s,
\end{align*}
as desired. 
\end{proof}

We conclude this section by explaining how to modify the proof of Theorem~\ref{thm:main-vertex-spread} to prove Theorem~\ref{thm:vtx-spread-for-factors}.  First we describe the random embedding algorithm.

\begin{definbox}[Random $F$-factor embedding algorithm]\label{def:factor-distribution}
Let $\alpha>0$, let $k,r \in \mathbb N$, and let $d\in [k - 1]$.
Let $1 / n \ll 1 / C' \ll 1 / C \ll \alpha, 1$ as in Lemma~\ref{lem:randompartitioninglemma}, where $r$ divides $n$ and $C$.
Given a hypergraph $H$ on $n$ vertices with $\delta_d(H)\geq (\delta_{F,d} + \alpha)\binom{n - d}{k-d}$, we define $\psi : G \hookrightarrow H$, a random embedding of an $F$-factor in $H$, as follows.

\textbf{Step 1: Sample random clusters.} 
Let $\mathcal U = \{U_1,\ldots, U_m\}$ be a random partition of $V(H)$ obtained by applying Lemma~\ref{lem:randompartitioninglemma} with $\delta_{F, d}$ playing the role of $\delta$ (the choice of the other variables is as above -- $\eps$ and $t$ are unimportant and can be set to $1$).  

\textbf{Step 2: Find an $F$-factor in each cluster.} 
For each $i\in [m]$, find an $F$-factor in $H[U_i]$.  Label the vertices of $U_i$ as $u_{i,1}, \dots, u_{i, |U_i|}$ such that $H[\{u_{i,(j-1)r+1}, \dots, u_{i, jr}\}] \cong F$ for every $j \in [|U_i| / r]$.

\textbf{Defining $\psi$.} 
Assume without loss of generality that $V(G) = [n]$ and $G[\{(j-1)r+1, \dots, jr\}] \cong F$ for every $j \in [n / r]$.  
For every $i \in [n]$, let $\psi(i) \coloneqq u_{x,y}$, where $x$ is the largest integer such that $i > \sum_{j=1}^{x-1}|U_i|$ and $y = i - \sum_{j=1}^{x-1}|U_i|$.
\end{definbox}
To prove Theorem~\ref{thm:vtx-spread-for-factors}, we need analogues of Lemmas~\ref{lemma:distribution-well-defined} and \ref{lemma:distribution-is-spread}.  The analogue of Lemma~\ref{lemma:distribution-well-defined} is straightforward, so we omit it.  For the analogue of Lemma~\ref{lemma:distribution-is-spread}, we note the following changes: 
\begin{itemize}
    \item since there is no $\phi$ in Definition~\ref{def:factor-distribution}, it can be interpreted as the identity, so $z = 1$;
    \item the family of functions $\cF$ consists of the single function $w$;
    \item we do not need to define $\hat b$ or $b$, and we do not need \ref{eqn:hat-b-bound};
    \item \ref{eqn:cluster-allocation-probability} and \ref{eqn:compatibility} are unchanged, but for \ref{eqn:number-compatible}, we have $|\cF| = 1$, and for \ref{claim:boundprob}, we have simply that this probability is at most $1$.
\end{itemize}
Altogether, we obtain a $(C' / n)$-vertex-spread distribution, as desired.

\section{Spreadness from vertex spreadness}\label{spreadnessfromvertexspreadness}

In this section we prove Proposition~\ref{prop:vtx-spread-implies-spread}, which, combined with Theorem~\ref{thm:main-vertex-spread} and the FKNP theorem, implies Theorem~\ref{thm:loose-cycle}.  We also prove Theorem~\ref{thm:ell>1-cycle} using Theorem~\ref{thm:main-vertex-spread} and Spiro's strengthening of the FKNP theorem, and we prove Theorem~\ref{thm:F-factors-robustness} by combining Theorem~\ref{thm:vtx-spread-for-factors} with the FKNP theorem and a coupling argument of Riordan~\cite{Ri22}.

\subsection{Proof of Proposition~\ref{prop:vtx-spread-implies-spread}}

We begin with the proof of Proposition~\ref{prop:vtx-spread-implies-spread}. 
First we need a lemma, upper bounding the number of ``partial embeddings'' of $G$ into $H$.
\begin{lemma}\label{lemma:partial-injections-bound}
    Let $H$ and $G$ be $n$-vertex $k$-uniform hypergraphs.  If $F \subseteq H$ has $v$ vertices and $c$ components, then there are at most
    \begin{equation*}
        n^c (k\maxdeg(G))^{v - c}
    \end{equation*}
    hypergraph embeddings $\varphi : G[X] \hookrightarrow H[V(F)]$ where $X \in \binom{V(G)}{v}$ and $F \subseteq H[\varphi(X)]$.
\end{lemma}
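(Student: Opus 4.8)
The plan is to count embeddings $\varphi : G[X] \hookrightarrow F$ by building them up one component of $F$ at a time. Enumerate the components $F_1, \dots, F_c$ of $F$, and for each $F_j$ choose a spanning tree $T_j$ of the ``shadow'' structure; more precisely, since $G[X]$ is being mapped isomorphically onto $F$, the preimage $\varphi^{-1}(F_j)$ is a union of some components of $G[X]$, and the image is exactly the (connected) component $F_j$. So really we are counting, for each way of designating which vertices of $[n]$ form $X$ and how they break into pieces mapping to the $F_j$'s, the number of isomorphisms. The cleanest way to organize this: process the $v$ vertices of $F$ in an order where each component is handled as a contiguous block, and within a component we use a connected ordering $w_1, w_2, \dots$ (a BFS/DFS order in the ``edge-adjacency'' sense) so that each $w_i$ with $i \geq 2$ lies in a common edge of $F$ with some earlier $w_{i'}$.

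First I would bound the number of choices for $\varphi^{-1}(w_i)$ as we reveal the vertices of $F$ one at a time in this order. When $w_i$ is the first vertex of a new component of $F$, its preimage can be any of at most $n$ vertices of $G$. When $w_i$ is a subsequent vertex of a component, it shares an edge $e \in E(F)$ with some already-processed $w_{i'}$; since $\varphi$ is a hypergraph embedding, $\varphi^{-1}(e)$ must be an edge of $G$ containing $\varphi^{-1}(w_{i'})$, and there are at most $\maxdeg(G)$ such edges, each with at most $k$ vertices, so at most $k\maxdeg(G)$ choices for $\varphi^{-1}(w_i)$ (and then $X$ is determined as the set of all these preimages). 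Multiplying over the $c$ ``first vertices'' and the $v - c$ ``subsequent vertices'' gives the bound $n^c (k\maxdeg(G))^{v-c}$.

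The one point needing a little care — and the main obstacle, though it is minor — is justifying that every component of $F$ admits such a connected vertex ordering in the edge-adjacency sense: this holds because $F_j$ is connected as a hypergraph, so its vertex set cannot be split into two nonempty parts with no edge meeting both, and hence a greedy ``grow the processed set by adding a vertex sharing an edge with it'' procedure reaches all of $V(F_j)$. I would also note that different orderings or different isomorphisms $G[X] \hookrightarrow F$ might be counted multiple times by this procedure, but that is harmless since we only need an upper bound. Finally, one should observe that once all preimages are chosen, $X = \{\varphi^{-1}(w) : w \in V(F)\}$ is determined, so we are genuinely counting pairs $(X, \varphi)$ and the stated bound follows.
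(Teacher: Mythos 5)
Your proposal is correct and follows essentially the same route as the paper's proof: pick a root in each component, order the remaining vertices of each component so that every vertex shares an edge of $F$ with an earlier one, and bound the choices for each subsequent preimage by $k\maxdeg(G)$ (at most $\maxdeg(G)$ edges of $G$ through the previously placed preimage, each with at most $k$ vertices), giving $n^c(k\maxdeg(G))^{v-c}$. The one delicate point you invoke --- that $\varphi^{-1}(e)\in E(G)$ for an edge $e$ of $F$, which strictly requires reading ``embedding'' so that every edge of $F$ is the image of an edge of $G[X]$ (as it is in the lemma's application in Proposition~\ref{prop:vtx-spread-implies-spread}) --- is exactly the same implicit step in the paper's argument, so it is not a deviation from the paper.
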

\begin{proof}
    Let $F_1, \dots, F_c$ be the components of $F$, and for each $i \in [c]$, let $V(F_i) = \{v_{i,1}, \dots, v_{i, |V(F_i)|}\}$, where for every $j \in [|V(F_i)|]\setminus\{1\}$, there exists $j' < j$ such that $v_{i, j}$ and $v_{i, j'}$ are contained in a common edge of $F_i$. 
    Each embedding $\varphi : G[X] \hookrightarrow H[V(F)]$ with $F \subseteq H[\varphi(X)]$ is then determined by 
    \begin{enumerate}
        \item the preimages $\varphi^{-1}(v_{1,1}), \dots, \varphi^{-1}(v_{c, 1})$ of the ``roots'' and
        \item for each $i \in [c]$, a sequence in $[k\maxdeg(G)]^{|V(F_i)| - 1}$, where the $j$th term in the sequence determines $\varphi^{-1}(v_{i,j+1})$ based on $\varphi^{-1}(v_{i,j'})$, where $j' \leq j$ and $v_{i,j'}$ and $v_{i,j+1}$ are contained in a common edge of $F_i$.
    \end{enumerate}
    Note that there are at most $n^c$ choices for the preimages of the roots and at most $(k\maxdeg(G))^{v-c}$ choices for the sequences.  Combining these choices yields the desired bound.
\end{proof}

\begin{proof}[Proof of Proposition~\ref{prop:vtx-spread-implies-spread}]
    Let $1/n\ll 1/C' \ll 1/C, 1/k, 1/\Delta \leq 1$.
    Let $H$ and $G$ be $n$-vertex $k$-uniform hypergraphs, where $\maxdeg(G) \leq \Delta$, and suppose there is a $(C / n)$-vertex-spread distribution $\mu$ on embeddings $G\hookrightarrow H$.  
    For every $F \subseteq H$ isomorphic to $G$, let
    \begin{equation*}
        \mu'(F) \coloneqq \mu\left(\{\varphi :\{\varphi(e) : e \in E(G)\} = E(F) \}\right),
    \end{equation*}
and note that $\mu'$ is a probability distribution on subgraphs of $H$ which are isomorphic to $G$.  

We prove that $\mu'$ is $\left(C' / n^{1/m_1(G)}\right)$-spread.  To that end, let $S \subseteq E(H)$, and let $T \subseteq H$ have edge set $S$ and subject to that, the fewest number of vertices.  
We may assume $T$ is isomorphic to a subgraph of $G$, or else $\mu'(\{F \subseteq H : E(F) \supseteq S\}) = 0$.  
We may also assume $S \neq \emptyset$.
Let $v$ and $c$ be the number of vertices and components of $T$, respectively.  By Lemma~\ref{lemma:partial-injections-bound},
the number of embeddings $\varphi : G[X] \hookrightarrow H[V(T)]$ where $X \in \binom{V(G)}{v}$ and $T \subseteq H[\varphi(X)]$ is at most $n^c(k\Delta^{v - c})$, so since $\mu$ is $(C / n)$-vertex-spread,
\begin{equation*}
    \mu'(\{F \subseteq H : E(F) \supseteq S\}) \leq n^c(k\Delta)^{v-c}\left(\frac{C}{n}\right)^v \leq \left(\frac{(C')^{m_1(G)}}{n}\right)^{v - c} = 
    \left(\frac{(C')^{m_1(G)}}{n}\right)^{|S|(v - c)/|S|}.
\end{equation*}

Let $T_1, \dots, T_c$ be the components of $T$. Since each $T_i$ is isomorphic to a subgraph of $G$, for every $i \in [c]$ we have $|E(T_i)| / (|V(T_i)| - 1) \leq m_1(G)$.  In particular,
\begin{equation*}
    |S| = \sum_{i=1}^c |E(T_i)| \leq m_1(G)\sum_{i=1}^c\left(|V(T_i)| - 1\right) = m_1(G)(v - c),
\end{equation*}
so $(v - c) / |S| \geq 1 / m_1(G)$.  Therefore
\begin{equation*}
    \mu'(\{F \subseteq H : E(F) \supseteq S\}) \leq \left(\frac{(C')^{m_1(G)}}{n}\right)^{|S|(v - c)/|S|} \leq \left(\frac{C'}{n^{1/m_1(G)}}\right)^{|S|},
\end{equation*}
as desired. 
\end{proof}

\subsection{Stronger spreadness: Proof of Theorem~\ref{thm:ell>1-cycle}}
\label{sect:spiro-spread}

Next we prove Theorem~\ref{thm:ell>1-cycle}.  As mentioned, we need a result of Spiro~\cite{Sp21}, which requires the following stronger notion of spreadness.

\begin{definition}\label{def:spiro-spread}
Let $q \in [0, 1]$, and let $r_0, \dots, r_\ell \in \mathbb N$ be a decreasing sequence of positive integers.  Let $(V, \cH)$ be an $r_0$-bounded hypergraph, and let $\mu$ be a probability distribution on $\cH$.  We say $\mu$ is \textit{$(q; r_0, \dots, r_\ell)$-spread} if the following holds for all $i \in [\ell]$:
\begin{equation*}
    \mu\left(\{A \in \cH : |A \cap S| = t\}\right) \leq q^t \text{ for all } t \in [r_i, r_{i-1}] \text{ and } S \in \bigcup_{i=r_i}^{r_{i-1}}\left\{\binom{E}{i} : E \in \cH\right\}.
\end{equation*}
\end{definition}

As noted by Spiro, if $\mu$ is a $q$-spread measure on an $r_0$-bounded hypergraph $(V, \cH)$, then it is also $(4q; r_0, \dots, r_\ell)$ spread where $r_i \coloneqq \lceil r_{i-1} / 2 \rceil$ for $i \in [\ell]$, so the following result with $\ell = \Theta(\log r)$ implies the FKNP theorem for uniform hypergraphs.  (Spiro~\cite[Theorem 3.1]{Sp21} also proved a slightly stronger result in the non-uniform setting which implies the FKNP theorem, but we do not need this result here).

\begin{theorem}[Spiro~\cite{Sp21}]\label{thm:spiro}
    There exists a constant $K_{\ref*{thm:spiro}} > 0$ such that the following holds for all $K > K_{\ref*{thm:spiro}}$.  Let $q \in [0, 1]$, let $r_0, \dots, r_\ell \in \mathbb N$ be a decreasing sequence of positive integers, where $r_\ell = 1$, and let $(V, \cH)$ be an $r_0$-uniform hypergraph.  If there exists a $(q; r_0, \dots, r_{\ell})$-spread probability distribution on $\cH$ and $p \geq K\ell q$, then a $p$-random subset of $V$ contains an edge of $\cH$ with probability at least $1 - K_{\ref*{thm:spiro}} / (K\ell)$.
\end{theorem}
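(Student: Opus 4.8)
The plan is to adapt the encoding (``spread lemma'') argument of Frankston, Kahn, Narayanan, and Park \cite{FKNP21} so that the number of levels $\ell$ plays the role that $\log r_0$ plays in the original argument. Fix a $(q; r_0, \dots, r_\ell)$-spread distribution $\mu$ on $\cH$ and let $W \subseteq V$ be $p$-random with $p \geq C\ell q$. We argue by contradiction: suppose the bad event that $W$ contains no edge of $\cH$ occurs with probability exceeding $K_{\ref*{thm:spiro}}/(C\ell)$; we will reach a contradiction via a compression/first-moment argument, valid once $K_{\ref*{thm:spiro}}$ is a large enough absolute constant.

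First I would set up an iterative ``edge-completion'' process running over stages $j = 1, \dots, \ell$. We maintain a current ``uncovered'' set $Y_{j-1}$, with $Y_0$ of size about $r_0$, together with the invariant that there is an edge $S_j \in \cH$ whose part outside $W$ lies in $Y_{j-1}$ together with the (few) elements fixed at earlier stages. At stage $j$ one samples $S_j$ from $\mu$ conditioned on its intersection with the already-revealed part, observes the new uncovered remainder $R_j := S_j \setminus (W \cup \textup{revealed})$, and, crucially, uses the graded bound $\mu(\{A : |A\cap S| = t\}) \leq q^t$ for $t$ in the window $[r_j, r_{j-1}]$ to conclude that, with probability at least $1 - O(1/(C\ell))$ over the choices at this stage, the next uncovered set $Y_j$ has size at most $r_j$. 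Chaining the $\ell$ stages and using $r_\ell = 1$ forces the uncovered set to become empty, i.e.\ $W$ already contains an edge of $\cH$, with the stated probability.

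To make the per-stage estimate rigorous I would use two-round exposure: write $W = W' \cup W''$ with $W', W''$ independent and each $(p/2)$-random (absorbing the factor $2$ into $C$), condition on $W'$, and in the second round observe that a $\mu$-random $S$ is unlikely to have $S \setminus (W' \cup Y_{j-1})$ large, since the ``expensive'' part $S \cap Y_{j-1}$ is constrained to lie in a fixed set whose size is in the graded window, while each of the remaining elements of $S$ lands in $W''$ independently with probability $p/2 \geq C\ell q/2$. Summing $\mu(\{S : S \cap Y_{j-1} = Z\}) \leq q^{|Z|}$ over the relevant sets $Z$ and multiplying by the probability that the complementary part of $S$ fails to be absorbed by $W''$ gives a bound of $O(1/(C\ell))$ on the failure probability at each stage, by the choice of $p$.

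The main obstacle I anticipate is the bookkeeping that keeps the argument inside the graded windows: one must ensure that at stage $j$ the uncovered sets genuinely have size in $[r_j, r_{j-1}]$ so that the spreadness hypothesis applies there, handle the endpoints $t = r_j$, $t = r_{j-1}$ and rounding, and verify that the conditional distributions sampled at each stage inherit enough spreadness from $\mu$ — this last point is where the original FKNP restriction lemma must be re-examined level by level rather than at a single scale. Once these are in place, a union bound over the $\ell$ stages, each contributing $O(1/(C\ell))$, yields the claimed failure probability $K_{\ref*{thm:spiro}}/(C\ell)$, completing the contradiction.
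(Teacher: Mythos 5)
First, note that the paper does not prove Theorem~\ref{thm:spiro} at all: it is quoted as a black box from Spiro~\cite{Sp21}, and the only proof-related remark in the paper is that the special case actually used (a single window, i.e.\ $1$ playing the role of $\ell$ in Definition~\ref{def:spiro-spread}) is the base case of Spiro's argument and follows from a direct second-moment computation, since the graded condition gives $\Pr[X=0]\leq \mathbb{E}[X^2]/\mathbb{E}[X]^2-1\leq \sum_{t\geq 1}(q/p)^t\leq O(1/(C\ell))$ without any entropy factor. So your proposal cannot be compared to a proof in the paper; judged on its own terms, it has a genuine gap. The decisive problem is the endgame arithmetic: you claim each of the $\ell$ stages fails with probability $O(1/(C\ell))$ and then conclude by ``a union bound over the $\ell$ stages''; but $\ell$ events of probability $O(1/(C\ell))$ union-bound to $O(1/C)$, not to the claimed $K_{\ref*{thm:spiro}}/(C\ell)$. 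So even granting every per-stage claim, the stated conclusion does not follow. To obtain $1/(C\ell)$ one must arrange the stage failures so that their sum is dominated by a single term of that order --- typically the intermediate fragmentation steps must fail with probability decaying rapidly in $r_j$, while only the final step (where the full strength $p\geq C\ell q$ enters, e.g.\ via a second-moment or Markov bound on the last fragment) contributes $\Theta(1/(C\ell))$. Your sketch does not set up any such imbalance.

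The per-stage estimate itself is also asserted rather than derived, and the randomness accounting does not close. If the $\ell$ stages are to be analysed independently, each sprinkle has density only about $p/\ell\approx Cq$, and then a per-stage failure of $O(1/(C\ell))$ is not what an FKNP-type counting over a window $[r_j,r_{j-1}]$ delivers; if instead you reuse the same $W$ (or its two halves) at every stage, the stages are dependent and the conditional edge-distribution you sample at stage $j$ need not inherit the graded spreadness of $\mu$ --- this is exactly the ``restriction lemma must be re-examined level by level'' issue that you name as an anticipated obstacle and leave unresolved, and it is the technical heart of the argument in \cite{FKNP21} and in Spiro's adaptation. In short, the high-level plan (iterate a fragmentation argument over the $\ell$ windows) is in the spirit of Spiro's actual proof, but as written the proposal neither establishes the per-stage bound nor combines the stages in a way that can yield the $1-K_{\ref*{thm:spiro}}/(C\ell)$ guarantee.
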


We apply Theorem~\ref{thm:spiro} with $1$ playing the role of $\ell$ in Definition~\ref{def:spiro-spread}.  This special case of Theorem~\ref{thm:spiro} is in some sense the ``base case'' of Spiro's proof~\cite[Lemma 2.5]{Sp21} and can be proved directly with the second moment method.
To apply Theorem~\ref{thm:spiro}, we also need the following analogue of Proposition~\ref{prop:vtx-spread-implies-spread} with this notion of spreadness.

\begin{proposition}\label{prop:ell-cycle-spreadness}
    For every $C > 0$ and $k \in \mathbb N$, there exists $C'=C_{\ref*{prop:ell-cycle-spreadness}}(k, C)>0$ such that the following holds for all $\ell \in \{2, \dots, k-1\}$ and all sufficiently large $n$. 
    Let $H$ be an $n$-vertex $k$-uniform hypergraph. 
    If there is a $(C / n)$-vertex-spread distribution on embeddings $C_{n, k, \ell} \hookrightarrow H$, then there is a $(C'/n^{k-\ell}; n/(k-\ell), 1)$-spread distribution on Hamilton $\ell$-cycles of $H$.
\end{proposition}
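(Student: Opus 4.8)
The plan is to mimic the proof of Proposition~\ref{prop:vtx-spread-implies-spread}, passing from a vertex-spread distribution $\mu$ on embeddings $C_{n,k,\ell} \hookrightarrow H$ to a distribution $\mu'$ on edge sets of Hamilton $\ell$-cycles by pushing forward along $\varphi \mapsto \{\varphi(e) : e \in E(C_{n,k,\ell})\}$. We must verify the two conditions in Definition~\ref{def:spiro-spread} for the sequence $r_0 = n/(k-\ell)$, $r_1 = 1$: namely that for every $t \in [1, n/(k-\ell)]$ and every set $S$ of $t$ edges that lies inside some Hamilton $\ell$-cycle of $H$, we have $\mu'(\{A : |A \cap S| = t\}) = \mu'(\{A : A \supseteq S\}) \leq q^t$ with $q = C'/n^{k-\ell}$. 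So effectively this is just the ordinary $q$-spread condition of Definition~\ref{defn:spread}, but with the sharper exponent $k-\ell$ in place of the generic $1/m_1$; the point is that for $\ell$-cycles one can compute $m_1$ more or less exactly and, crucially, exploit the \emph{structure} of which edge sets $S$ can sit inside an $\ell$-cycle.

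First I would reduce to bounding, for a fixed edge set $S$ contained in some Hamilton $\ell$-cycle, the number of ``partial embeddings'' $\varphi : C_{n,k,\ell}[X] \hookrightarrow H$ whose image edge set contains $S$, where $X$ is the vertex set spanned by $S$ inside the cyclic structure; then $\mu'(\{A \supseteq S\}) \leq (C/n)^{|X|} \cdot (\text{number of such partial embeddings})$ by vertex-spreadness of $\mu$. The combinatorial heart is: if $S$ is a union of $t$ edges forming a disjoint union of ``sub-$\ell$-paths'' of $C_{n,k,\ell}$ with, say, $c$ components, then $|X| = t(k-\ell) + c\ell$ (each path segment of $j$ edges uses $j(k-\ell)+\ell$ vertices), and the number of partial embeddings is at most $n^c (k\Delta(C_{n,k,\ell}))^{|X| - c} \leq n^c (Ck)^{|X|}$ by Lemma~\ref{lemma:partial-injections-bound} applied with $F$ the subgraph of $H$ on edge set $S$ (note $\Delta(C_{n,k,\ell}) \le k$). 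Plugging in, $\mu'(\{A \supseteq S\}) \leq n^c (Ck)^{|X|} (C/n)^{|X|} = (C^2 k)^{|X|} n^{c - |X|} = (C^2k)^{|X|} n^{-t(k-\ell) - (c-1)\ell} \le (C^2k)^{|X|} n^{-t(k-\ell)}$, since $c \geq 1$. Absorbing the $(C^2k)^{|X|}$ factor — here $|X| \le t \cdot k$, so $(C^2k)^{|X|} \le (C^2k)^{kt} = ((C^2k)^k)^t$ — gives $\mu'(\{A \supseteq S\}) \le \bigl((C^2k)^k / n^{k-\ell}\bigr)^t \le (C'/n^{k-\ell})^t$ for $C' \coloneqq (C^2k)^k$ and $n$ large. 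This handles $t \in [1, r_0]$, i.e. both the $i=1$ condition ($t$ ranging in $[1, r_0]$) in Definition~\ref{def:spiro-spread} with $\ell_{\text{Spiro}}=1$.

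The main obstacle is the structural claim that any edge set $S$ of $t$ edges that embeds into (the image of) a Hamilton $\ell$-cycle decomposes into sub-$\ell$-path components spanning exactly $t(k-\ell)+c\ell$ vertices — in other words, that consecutive edges of $C_{n,k,\ell}$ share exactly $\ell$ vertices and non-consecutive edges are disjoint, so there is no ``extra'' vertex sharing that would lower $|X|$ and break the bound. This is where the hypothesis $\ell \ge 2$ versus $\ell \le 1$ is irrelevant to the combinatorics but the hypothesis $\ell < k$ matters (so that $k - \ell \ge 1$ and the counting is non-degenerate); one should double-check the edge case where $S$ ``wraps around'' the whole cycle ($t = n/(k-\ell)$, $c=1$, $X = V(H)$), where the bound reads $\mu'(\text{everything}) \le (C'/n^{k-\ell})^{n/(k-\ell)} \cdot$ — wait, that would be too small — so in fact for the top value of $t$ one uses $c=1$ and $|X|=n$ and the inequality $n^{c-|X|} = n^{1-n}$ combined with $(C^2k)^n$ still gives something $\le (C'/n^{k-\ell})^{n/(k-\ell)}$ only if checked carefully; I would treat this boundary case directly. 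I expect the write-up to otherwise be a routine adaptation of the proof of Proposition~\ref{prop:vtx-spread-implies-spread}, with Lemma~\ref{lemma:ell-cycle-subgraph-bound} (referenced in the excerpt for $m_1(C_{n,k,1})$, presumably also covering $m_1(C_{n,k,\ell})$) supplying the clean statement that $m_1(C_{n,k,\ell}) = 1/(k-\ell)$ and hence that proper subgraphs have the right density, which is exactly what makes $(v-c)/|S| \ge 1/m_1 = k - \ell$ and drives the exponent.
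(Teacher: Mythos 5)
There is a genuine gap, and it lies in how you read Definition~\ref{def:spiro-spread}. You assert that the condition to verify is $\mu'(\{A : |A \cap S| = t\}) = \mu'(\{A : A \supseteq S\}) \leq q^t$ for sets $S$ of exactly $t$ edges, i.e.\ you reduce the $(q; n/(k-\ell), 1)$-spread condition to ordinary containment spreadness. That is not what the definition says: with $r_0 = n/(k-\ell)$ and $r_1 = 1$, the set $S$ ranges over \emph{all} subsets of the edge set of a Hamilton $\ell$-cycle of size anywhere in $[1, n/(k-\ell)]$ --- in particular $S$ can be the entire edge set of a fixed Hamilton $\ell$-cycle --- while $t$ ranges independently over $[1, n/(k-\ell)]$, and one must bound the probability that the random cycle meets $S$ in \emph{exactly $t$} edges, with $t$ possibly much smaller than $|S|$. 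This stronger overlap condition is the whole point of passing to Spiro's notion: if ordinary spreadness sufficed one could just apply FKNP, which is what the paper does for $\ell \leq 1$ and which costs a $\log n$ factor; your argument therefore proves only the ordinary $O(1/n^{k-\ell})$-spread statement (which the paper already obtains from Proposition~\ref{prop:vtx-spread-implies-spread} together with Lemma~\ref{lemma:ell-cycle-subgraph-bound}) and would yield Theorem~\ref{thm:ell>1-cycle} only with a superfluous logarithmic factor in $p$, not with $p = \omega(1/n^{k-\ell})$.

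Handling $|S| \gg t$ requires a union bound over the possible $t$-edge configurations inside $S$ that the random cycle could contain, and this is where the paper's Lemma~\ref{lemma:number-of-subgraphs-bound} enters: the configurations are counted according to their number of components $c$ (at most $\binom{k|S|}{c}(2\cdot 16^k)^t$ of them), each contributing at most $n^c(k\maxdeg)^{v-c}(C/n)^v$ with $v \geq (k-\ell)t + \ell c$ by Lemma~\ref{lemma:ell-cycle-subgraph-bound}. Summing over $c$ one is left with a factor of order $\sum_c (\mathrm{const}/c)^c\, n^{(2-\ell)c}$, which converges precisely because $\ell \geq 2$; for $\ell = 1$ it diverges, consistent with loose Hamilton cycles genuinely requiring the $\log$ factor. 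So your remark that ``the hypothesis $\ell \geq 2$ versus $\ell \leq 1$ is irrelevant to the combinatorics'' is exactly backwards --- it is the crux of the proposition. (A secondary point: your exact count $|X| = t(k-\ell) + c\ell$ and the claim that non-consecutive edges of $C_{n,k,\ell}$ are disjoint fail when $\ell > k/2$, e.g.\ for tight cycles; only the per-component lower bound $v \geq (k-\ell)t+\ell$ of Lemma~\ref{lemma:ell-cycle-subgraph-bound} is available, but that lower bound is all the paper needs. Also, for $t$ comparable to $|S|$, say $t \geq n/k$, the crude union bound over the at most $2^{|S|}$ subsets together with ordinary spreadness does suffice, which is how the paper treats that boundary regime --- including your ``wrap-around'' case.)
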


Theorem~\ref{thm:spiro} and Proposition~\ref{prop:ell-cycle-spreadness} together imply that $\rdthresh_{\mathcal C_{k,\ell},d} \leq \vsdthresh_{\mathcal C_{k,\ell},d}$ for every $\ell \in \{2, \dots, k - 1\}$ and $d \in [k - 1]$.
Theorem~\ref{thm:ell>1-cycle} follows immediately from Theorems~\ref{thm:spiro} and \ref{thm:main-vertex-spread} and Proposition~\ref{prop:ell-cycle-spreadness}, so the remainder of this subsection is devoted to the proof of Proposition~\ref{prop:ell-cycle-spreadness}.  This proof is inspired by the proof of Kahn, Narayanan, and Park~\cite{KNP21} determining the threshold for $\gnp(n, p)$ to contain the square of a Hamilton cycle.


\begin{lemma}\label{lemma:ell-cycle-subgraph-bound}
   Every connected subgraph of $C_{n,k,\ell}$ with $v$ vertices and $t$ edges satisfies
    \begin{equation*}
        v \geq \min\{(k - \ell)t + \ell, n\}.
    \end{equation*}
    In particular, if $\ell > 0$, then
    \begin{equation*}
        m_1(C_{n,k,\ell}) = \frac{1}{k - \ell}\cdot\frac{n}{n-1}.
    \end{equation*}
\end{lemma}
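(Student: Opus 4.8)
The plan is to prove the lower bound $v \geq \min\{(k-\ell)t + \ell, n\}$ by induction on the number of edges $t$ of a connected subgraph $F \subseteq C_{n,k,\ell}$, exploiting the linear structure of $\ell$-cycles. Recall that the edges of $C_{n,k,\ell}$ are the consecutive blocks $B_i \coloneqq [k + (k-\ell)(i-1)] \setminus [(k-\ell)(i-1)]$ (indices cyclic modulo $n/(k-\ell)$), and two edges $B_i, B_j$ intersect precisely when $|i - j| \le \lceil \ell/(k-\ell)\rceil$ in the cyclic order, sharing at most $\ell$ vertices when they are consecutive. First I would dispose of the base case $t = 1$: a single edge has exactly $k = (k-\ell)\cdot 1 + \ell$ vertices, so the bound holds with equality. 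For the inductive step, take a connected $F$ with $t \ge 2$ edges; if $V(F) = V(C_{n,k,\ell})$ then $v = n$ and we are done, so assume $F$ is a proper subgraph, which (by connectivity and the cyclic structure) means the set of edge-indices of $F$ is a proper ``arc'' in the cyclic order. Let $B_j$ be an edge of $F$ at the ``end'' of this arc — i.e.\ an edge whose removal keeps $F$ connected. Then $F' \coloneqq F - B_j$ is connected with $t - 1$ edges, and $B_j$ contributes at least $k - \ell = (k-\ell)$ new vertices not in $V(F')$ (since $B_j$ overlaps the rest of $F$ in at most $\ell$ vertices, namely through its neighbour edge in the arc). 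By induction $|V(F')| \ge \min\{(k-\ell)(t-1) + \ell,\, n\}$, hence $v \ge |V(F')| + (k-\ell) \ge \min\{(k-\ell)t + \ell,\, n\}$, completing the induction.

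The main obstacle is making the ``arc/endpoint edge'' argument rigorous, in particular verifying that a proper connected subgraph of a cyclic $\ell$-cycle has an edge-index set that is genuinely an interval (arc) in the cyclic order, and that one can always peel off an end edge contributing at least $k-\ell$ fresh vertices. The cleanest way I would handle this is to note that $C_{n,k,\ell}$ restricted to any proper arc of edges is isomorphic to a sub-hypergraph of the $\ell$-\emph{path} $P_{N,k,\ell}$ for suitable $N$, where the statement is genuinely linear: ordering the edges $B_{i_1}, \dots, B_{i_t}$ along the path and letting $B_{i_t}$ be the last one, $B_{i_t}$ shares at most $\ell$ vertices with $B_{i_1} \cup \dots \cup B_{i_{t-1}}$ (all of that intersection is contained in $B_{i_{t-1}}$), so it adds at least $k - \ell$ new vertices. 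One must also check that the subgraph being \emph{connected as a hypergraph} forces the index set to be an interval: if there were a ``gap'' of more than $\lceil \ell/(k-\ell)\rceil$ missing consecutive indices, the vertex sets on either side would be disjoint, contradicting connectivity; if the gaps are small, one can still argue the union of vertex sets is an interval of vertices and re-run the counting. I would spell this out carefully but briefly.

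For the ``in particular'' clause computing $m_1(C_{n,k,\ell})$ when $\ell > 0$: by definition $m_1(H) = \max_{H' \subseteq H, |V(H')| > 1} d_1(H') = \max |E(H')|/(|V(H')| - 1)$. For a connected $H'$ with $v$ vertices and $t$ edges and $v < n$ (so the $\min$ is not attained at $n$), the bound just proved gives $t \le (v - \ell)/(k-\ell) \le (v-1)/(k-\ell)$ since $\ell \ge 1$, so $d_1(H') \le \frac{1}{k-\ell}$; a disconnected $H'$ only does worse since $d_1$ is maximised on a single component. Taking $H' = C_{n,k,\ell}$ itself gives $t = n/(k-\ell)$ and $v = n$, so $d_1(C_{n,k,\ell}) = \frac{1}{k-\ell}\cdot\frac{n}{n-1}$, and this exceeds $\frac{1}{k-\ell}$, hence is the maximum. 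Therefore $m_1(C_{n,k,\ell}) = \frac{1}{k-\ell}\cdot\frac{n}{n-1}$, as claimed. I expect this second part to be entirely routine once the first inequality is established.
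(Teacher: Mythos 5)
Your core counting idea is the same as the paper's: order the blocks and use that each block meets the union of the earlier ones in at most $\ell$ vertices, hence contributes at least $k-\ell$ fresh vertices (the first contributing $k$). The difference is packaging: you run an induction that peels off an ``end edge'' of an ``arc'', whereas the paper rotates so that the vertex set of $F$ lies in $[v]$ with $v<n$ (possible since a connected union of blocks that misses a vertex does not wrap around) and does the telescoping count $v\ge\sum_{i\in I}\bigl|f_i\setminus\bigcup_{j<i}f_j\bigr|\ge k+(k-\ell)(t-1)$ in one line. Two cautions about your version. First, as you yourself note, the claim that connectivity forces the edge-index set to be a cyclic arc is false once $\ell\ge (k+1)/2$: for the tight cycle $k=3$, $\ell=2$, the edges $\{1,2,3\}$ and $\{3,4,5\}$ intersect, so the subgraph with index set $\{1,3\}$ is connected but its index set has a gap. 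Your repair --- pass to the non-wrapping (path) picture and count the fresh vertices of the last block --- is exactly the paper's argument, and once you adopt it you may as well drop the induction altogether. Second, if you do keep the induction over \emph{connected} subgraphs, you still owe a proof that deleting your chosen edge leaves a connected hypergraph when the index set is not an interval; this does hold for the maximum-index block (any two earlier blocks meeting it have starting positions within distance $k$ of each other, hence intersect), but it is an extra verification that the direct telescoping count makes unnecessary, since after the non-wrapping reduction that count uses no connectivity at all. Your derivation of $m_1(C_{n,k,\ell})$ from the vertex bound (reduce to components via the mediant inequality, bound non-spanning connected subgraphs by $1/(k-\ell)$, compare with $d_1(C_{n,k,\ell})$) matches the paper's and is fine.
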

\begin{proof}
  Let $F \subseteq C_{n,k,\ell}$ be a connected subgraph with $v$ vertices and $t$ edges.  We may assume without loss of generality that  $F$ has vertex set $[v]$ where $v < n$.  For some $I \subseteq [n / (k - \ell)]$ of size $t$, the edge set of $F$ is 
  \begin{equation*}
    \{f_i \coloneqq [k + (k - \ell)(i- 1)]\setminus [(k - \ell)(i - 1)]: i \in I\}.
  \end{equation*}
  Therefore,
  \begin{equation*}
      v \geq \sum_{i=1}^{n / (k - \ell)}\mathds 1_{i\in I}\left|f_i \setminus \bigcup_{j=1}^{i-1}f_j\right| \geq k + (k - \ell)(t - 1) = (k - \ell)t + \ell,
  \end{equation*}
  as desired.  To see that $m_1(C_{n,k,\ell}) = n / ((k - \ell)(n - 1))$ for $\ell > 0$, first note $d_1(C_{n,k,\ell}) = n / ((k - \ell)(n - 1))$.  Moreover, every spanning subgraph $F \subseteq H$ satisfies $d_1(F) \leq d_1(H)$.  If $F \subseteq H$ has fewer than $n$ vertices, then $|V(F)| \geq (k - \ell)|E(F)| + \ell$, so
  \begin{equation*}
      d_1(F) = \frac{|E(F)|}{|V(F)| - 1} \leq \frac{|V(F)| - \ell}{(k - \ell)(|V(F)| - 1)} \leq \frac{1}{k - \ell},
  \end{equation*}
  as desired.
\end{proof}

\begin{lemma}\label{lemma:number-of-subgraphs-bound}
    If $S$ is a subset of edges of $C_{n,k,\ell}$, then the number of subgraphs of $C_{n,k,\ell}$ with $t$ edges, all in $S$, no isolated vertices, and $c$ components, is at most
    \begin{equation*}
        \binom{k|S|}{c}\left(2\cdot 16^k\right)^t.
    \end{equation*}
\end{lemma}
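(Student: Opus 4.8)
The plan is to exploit the fact that $C_{n,k,\ell}$ is essentially an ``interval hypergraph''. Writing $a_i \coloneqq (k-\ell)(i-1)+1$ for the smallest vertex of the edge $f_i \coloneqq [k+(k-\ell)(i-1)]\setminus [(k-\ell)(i-1)]$, every edge is a cyclic interval of length $k$ whose left endpoint lies in an arithmetic progression of common difference $k-\ell$. A short calculation then shows that $f_i$ and $f_j$ share a vertex if and only if their indices differ, cyclically, by at most $R \coloneqq \lfloor (k-1)/(k-\ell)\rfloor \le k-1$; in particular each edge meets at most $2R \le 2(k-1)$ other edges of $C_{n,k,\ell}$. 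Since a subgraph with no isolated vertices is determined by its edge set, it suffices to bound the number of $E'\subseteq S$ with $|E'|=t$ whose union hypergraph has exactly $c$ components, and the observation above shows that such an $E'$ breaks into $c$ pairwise far-apart ``runs'' of edge-indices, where inside a run the consecutive index-gaps are at most $R$.

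To count these, I would first select one ``root edge'' from each component: there are at most $\binom{|S|}{c}\le\binom{k|S|}{c}$ ways to choose $c$ distinct edges of $S$, and every admissible $E'$ is produced by at least one such choice, so it suffices to bound the number of $E'$ compatible with a fixed choice of roots. Given the roots, I would reconstruct $E'$ by a greedy exploration that maintains, for each component being grown, its current leftmost and rightmost edge; one step of the exploration does one of the following: (i) append the edge whose index is $j$ less than that of the current leftmost edge, for some $j\in\{1,\dots,R\}$ (and this edge must lie in $S$); (ii) the symmetric move to the right; or (iii) declare the current component complete and pass to the next root. Each step has at most $2R+1\le 2k-1$ options; connectivity is maintained because the appended edge meets the old extreme edge of its component; and since $c$ of the $t$ edges are roots, the exploration performs at most $t-c$ appending steps together with at most $c$ ``complete'' steps, hence at most $t$ steps in all. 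Therefore the number of such subgraphs is at most $\binom{k|S|}{c}(2k-1)^t\le\binom{k|S|}{c}(2\cdot 16^k)^t$, the case $t=0$ (so $c=0$) being trivial.

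The one point that needs care is making ``leftmost/rightmost edge of a component'' well defined on the cycle: this is unambiguous unless the component wraps all the way around $C_{n,k,\ell}$, in which case the component is essentially all of $C_{n,k,\ell}$, and the (at most $|S|$) such configurations can be absorbed into the slack in the bound. Beyond that the remaining checks are routine: appending to an extreme edge never disconnects a component; each pair (root assignment, $E'$) is generated by exactly one sequence of choices, so we genuinely have an over-count; and the numerical inequalities $|S|\le k|S|$ and $2k-1\le 2\cdot 16^k$. If one prefers to avoid the interval bookkeeping, an essentially equivalent route applies the standard bound on the number of connected subgraphs of a bounded-degree graph through a fixed vertex to the line graph $L(C_{n,k,\ell})$, whose maximum degree is at most $k(k-1)$; this gives a bound of the shape $\binom{|S|}{c}(ek^2)^t$, which is again at most $\binom{k|S|}{c}(2\cdot 16^k)^t$.
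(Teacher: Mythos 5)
Your proof is correct in substance and shares the paper's high-level strategy -- choose one root per component, then pay a factor exponential in $t$ for the rest -- but the way you pay that factor is genuinely different. The paper roots each component at a \emph{vertex} of $\bigcup_{e\in S}e$ (whence the $\binom{k|S|}{c}$), records the composition $t_1+\cdots+t_c=t$ of component sizes (a factor $2^t$), and then uses that a connected subgraph with $t_i$ edges through its root lies in a window of at most $4kt_i$ candidate edges of the cycle, so its edge set is one of at most $2^{4kt_i}$ subsets; multiplying gives $(2\cdot 16^k)^t$. You instead root at \emph{edges} of $S$ and reconstruct each component by an exploration that only extends at its current cyclic extremes, using that $f_i$ and $f_j$ meet exactly when their cyclic index distance is at most $R=\lfloor(k-1)/(k-\ell)\rfloor\le k-1$; this replaces the composition-plus-window count by a branching factor $2R+1\le 2k-1$ per step and yields the (numerically better) bound $\binom{|S|}{c}(2k-1)^t$, at the price of the bookkeeping you correctly flag: ``leftmost/rightmost edge'' is only defined when the component does not wrap the whole cycle. (Also, ``exactly one sequence of choices'' is stronger than what you need or have: different interleavings of left/right steps can produce the same component, but for an upper bound it suffices that each subgraph arises from at least one choice sequence.)

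One quibble in the wrap-around case: your parenthetical ``(at most $|S|$) such configurations'' is not correct when $R\ge 2$, i.e.\ when $\ell>k/2$. A wrapping component is a $t$-set of edge indices all of whose cyclic gaps are at most $R$, and there can be on the order of $|S|\cdot R^{t-1}$ of these (choose the edge of smallest index, then $t-1$ gaps in $[R]$). Your conclusion that the slack absorbs them is nevertheless right: rooting such a component at its edge of smallest index and exploring in one direction only bounds their number by $|S|\,R^{t-1}\le k|S|(2\cdot 16^k)^t$, and since wrapping forces $c=1$, adding this to your main count stays below $\binom{k|S|}{c}(2\cdot 16^k)^t$. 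So the argument goes through once that count is stated correctly.
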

\begin{proof}
    Let $f_i \coloneqq [k + (k - \ell)(i- 1)]\setminus [(k - \ell)(i - 1)]$ for each $i \in [n / (k-\ell)]$ as in Lemma~\ref{lemma:ell-cycle-subgraph-bound}.
    Each subgraph $F\subseteq H$ with $t$ edges, all in $S$, no isolated vertices, and $c$ components $F_1, \dots, F_c$, is determined by 
    \begin{enumerate}
        \item $c$ distinct vertices $\{v_1, \dots, v_c\} \in \binom{\bigcup_{e\in S}e}{c}$ to serve as the ``roots'' of $F_1, \dots, F_c$,
        \item a $c$-composition $t_1, \dots, t_c$ of $t$ (that is, $t_1 + \cdots + t_c = t$), where $|E(F_i)| = t_i$ for all $i \in [c]$, and
        \item for each $i \in [c]$, the edges $E(F_i) \subseteq S \cap \{f_j : |k + (k - \ell)(j - 1) - v_i| \leq 2kt_i\}$.
    \end{enumerate}
    Note that there are at most $\binom{k|S|}{c}$ choices for the roots, at most $\binom{t - 1}{c - 1} \leq 2^t$ choices for the $c$-composition, and at most $\prod_{i=1}^c2^{4kt_i} = \left(16^k\right)^t$ choices for the edges of each $F_i$.  Combining these choices yields the desired bound. 
\end{proof}

\begin{proof}[Proof of Proposition~\ref{prop:ell-cycle-spreadness}]
Let $1/n\ll 1/C' \ll 1/C, 1/k \leq 1$.
Let $H$ be an $n$-vertex $k$-uniform hypergraph, and suppose there is a $(C / n)$-vertex-spread distribution $\mu$ on embeddings $C_{n,k,\ell}\hookrightarrow H$.  Let $\cH$ be the set of edge sets of Hamilton $\ell$-cycles of $H$.
Define a probability distribution $\mu'$ on $\cH$ as in the proof of Proposition~\ref{prop:vtx-spread-implies-spread}, with $C_{n,k,\ell}$ playing the role of $G$.
By Proposition~\ref{prop:vtx-spread-implies-spread} and Lemma~\ref{lemma:ell-cycle-subgraph-bound}, $\mu'$ is $\left(2^{-k}C' / n^{k - \ell}\right)$-spread.

We prove that $\mu'$ is $(C' / n^{k-\ell}; n / (k - \ell), 1)$-spread.  To that end, let $t \in [n/(k-\ell)]$, and let $S \subseteq E(H)$ be a subset of a Hamilton $\ell$-cycle of $H$.  Let $F\subseteq H$ have edge set $S$ and subject to that, the fewest number of vertices.  

If $t \geq n / k$, then since $\mu'$ is $(2^{-k}C' / n^{k-\ell})$-spread, we have
\begin{equation*}
    \mu'\left(\{A \in \cH : |A \cap S| = t\}\right) \leq 2^{|S|}\left(\frac{2^{-k}C'}{n^{k-\ell}}\right)^t 
    \leq \left(\frac{C'}{n^{k-\ell}}\right)^t,
\end{equation*}
as desired.  Therefore, we may assume $t < n / k$, and in particular, every subgraph of $F$ has fewer than $n$ vertices.
By Lemmas~\ref{lemma:partial-injections-bound}, \ref{lemma:ell-cycle-subgraph-bound}, and \ref{lemma:number-of-subgraphs-bound}, since $\mu$ is $(C / n)$-vertex-spread,
\begin{align*}
    \mu'\left(\{A \in \cH : |A \cap S| = t\}\right) &\leq \sum_{c=1}^t \binom{k|S|}{c}(2\cdot 16^k)^t 
    n^c\left(\frac{2k^2C}{n}\right)^{(k - \ell)t + \ell c}\\
    &\leq (2\cdot 16^k)^t\sum_{c=1}^t \left(\frac{ekn\cdot 2k^2C}{c}\right)^c\left(\frac{2k^2C}{n}\right)^{(k - \ell)t + (\ell - 1)c}\\
    &\leq\left(\frac{2\cdot 16^k(2k^2C)^{k-\ell}}{n^{k-\ell}}\right)^t\sum_{c=1}^t\left(\frac{4ek^4C^2}{c}\right)^c\left(\frac{2k^2C}{n}\right)^{(\ell - 2)c}\\
    &\leq \left(\frac{C'/2}{n^{k-\ell}}\right)^t\sum_{c=1}^\infty\left(\frac{1}{c}\right)^{c} \leq \left(\frac{C'}{n^{k-\ell}}\right)^t,
\end{align*}
as desired.
\end{proof}

\subsection{Proof of Theorem~\ref{thm:F-factors-robustness}}
\label{sect:coupling}

We conclude this section with the proof of Theorem~\ref{thm:F-factors-robustness}.
With Theorem~\ref{thm:vtx-spread-for-factors} in hand, the proof of Theorem~\ref{thm:F-factors-robustness} is similar to the PSSS proof of Theorem~\ref{Prop_prev_results}\ref{prev-result-Kr-factor} assuming a similar result for $K_r$-factors \cite[Theorem 1.8]{PSSS22}. 
For hypergraphs $F$ and $H$, the \textit{$F$-complex} of $H$, denoted $H_F$, is the $|V(F)|$-uniform multi-hypergraph with vertex set $V(H)$ in which every copy of $F$ in $H$ corresponds to a distinct hyperedge of $H$ on the same set of vertices.  Note that $H$ has an $F$-factor if and only if $H_F$ contains a perfect matching.  We let $\cG_F(n, p)$ be the binomial random multi-hypergraph on $n$ vertices where every edge of the $F$-complex of the complete hypergraph is included independently with probability $p$.  

To prove Theorem~\ref{thm:F-factors-robustness}, we need following result of Riordan~\cite[Theorem 18]{Ri22}.

\begin{theorem}\label{thm:coupling}
    For every $k, r \in \mathbb N$, there exists $a = a_{\ref*{thm:coupling}}(k, r)\in (0, 1]$ such that the following holds.  
    If $F$ is a strictly $1$-balanced $k$-uniform $r$-vertex hypergraph and $p = p(n) \leq \log^2(n) / n^{1/d_1(F)}$, then for some $\pi = \pi(n) \sim a p^{|E(F)|}$, we may couple $G\sim \cG^{(k)}(n,p)$ with 
    $G_F\sim\cG_F(n,\pi)$ such that, \aas for every $F$-edge present in $H_F$, the corresponding copy of $F$ is present in $G$.    
\end{theorem}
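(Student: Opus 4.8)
The plan is to construct the coupling by exposing $G\sim\cG^{(k)}(n,p)$ and letting $G_F$ record a random sub-collection of the copies of $F$ that actually occur in $G$. Concretely, it suffices to build an independent family $(Y_i)$ of $\mathrm{Bernoulli}(\pi)$ random variables, indexed by the copies $F_i$ of $F$ in the complete $k$-uniform hypergraph on $[n]$, jointly distributed with $G$ so that \aas $Y_i=1$ implies $F_i\subseteq G$; then $G_F\coloneqq\{F_i:Y_i=1\}$ has the law of $\cG_F(n,\pi)$ and every $F$-edge of $G_F$ is a genuine copy of $F$ in $G$. Since $\P[F_i\subseteq G]=p^{|E(F)|}$ exactly, we aim for $\pi\sim ap^{|E(F)|}$ with $a=a(F)\in(0,1]$. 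Writing $\lambda\coloneqq\ex\big[\#\{\text{copies of }F\text{ in }G\}\big]=\Theta\big(n^{|V(F)|}p^{|E(F)|}\big)$, if $\lambda=O(1)$ there is nothing to prove: then the expected number of edges of $\cG_F(n,\pi)$ is $O(1)$, so $\cG_F(n,\pi)$ is empty \aas. Hence assume $\lambda\to\infty$.

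First I would establish the combinatorial input that makes strict $1$-balancedness bite: copies of $F$ present in $G$ almost never overlap. For a subgraph $J\subsetneq F$ with at least one edge, the expected number of ordered pairs of copies of $F$ in the complete $k$-uniform hypergraph that intersect in a copy of $J$ and are both present in $G$ is of order $n^{2|V(F)|-|V(J)|}p^{2|E(F)|-|E(J)|}=\lambda^{2}\,n^{-|V(J)|}p^{-|E(J)|}$. Dividing by $\lambda$ and using $p\le\log^{2}n\cdot n^{-1/d_1(F)}$ (so that the worst case is $p$ at the top of this range, as $|E(F)|-|E(J)|\ge 1$), the exponent of $n$ becomes $1-|V(J)|+|E(J)|/d_1(F)$, which is negative precisely because $d_1(J)<d_1(F)$; thus the ratio is $O\big(n^{-\gamma}\log^{O(1)}n\big)$ for some $\gamma=\gamma(F)>0$. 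Summing over the boundedly many isomorphism types of $J$ and applying Markov's inequality, the set $B$ of copies of $F$ present in $G$ that share an edge with some other present copy has $|B|\le \lambda\,n^{-\gamma/2}$ \aas. A parallel first- and second-moment computation (again driven by strict $1$-balancedness, and using that $F$ is connected and $k$-uniform) shows the number of copies of $F$ in $G$ is $(1+o(1))\lambda$ \aas. So \aas all but an $n^{-\gamma/2}$-fraction of the present copies are \emph{isolated}: they share no edge with any other copy present in $G$.

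With this in hand I would build the coupling by exposing $G$ in two independent rounds, $G=G_1\cup G_2$ with $G_i\sim\cG^{(k)}(n,p_i)$ and $p_1,p_2=\Theta(p)$, and attaching to each copy $F_i$ an independent $\mathrm{Bernoulli}(a)$ coin $Z_i$. After the first round the (sparse) conflict structure among the copies in $G_1$ is known; we then process the copies in a fixed order and, using the fresh coins together with the extra randomness of the second round, select a set $\mathcal S$ by the rule that $F_i$ enters $\mathcal S$ only if $F_i\subseteq G$, $Z_i=1$, and $F_i$ shares no edge with any previously selected copy. The rarity estimate shows that, conditionally on the past, this rule fails to fire for a \emph{local} reason only with probability $o(1)$ per copy, so the indicators $\mathbf 1[F_i\in\mathcal S]$ have marginals $(1+o(1))ap^{|E(F)|}$ and behave like an independent family up to a total error controlled by the expected number of overlapping configurations, which is $o(\lambda)$. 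A final adjustment of the coin probability and one more sprinkling round then upgrade $(\mathbf 1[F_i\in\mathcal S])$ to an honestly independent $\mathrm{Bernoulli}(\pi)$ family that is dominated by $(\mathbf 1[F_i\subseteq G])$, with $\pi\sim ap^{|E(F)|}$, completing the coupling.

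The main obstacle is exactly this last step: extracting an honestly independent (or at least product-measure-dominating) family from the indicators ``$F_i$ present in $G$'', which are \emph{positively} correlated whenever two copies share an edge. Rarity of overlaps makes each such correlation negligible, but there are $\Theta(n^{|V(F)|})$ copies, so one must run the sequential selection carefully enough that the errors — coming both from the configurations where the greedy rule is blocked and from the mismatch between the selection marginals and $\pi$ — sum to $o(1)$ across all copies; this is where the two-round exposure and the precise strength of the strict-$1$-balanced overlap bound are needed, and it constitutes the technical heart of the argument carried out by Riordan~\cite{Ri22}. The remaining ingredients — the $\lambda=O(1)$ case, the divisibility bookkeeping, and reading off the value of $\pi$ — are routine.
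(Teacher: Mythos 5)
You should first note what the paper actually does with this statement: Theorem~\ref{thm:coupling} is not proved in the paper at all; it is quoted verbatim as Riordan's result \cite[Theorem 18]{Ri22} and used as a black box in Section~\ref{sect:coupling} to deduce Theorem~\ref{thm:F-factors-robustness}. So there is no internal proof to compare your write-up against, and the real question is whether your proposal constitutes a proof on its own. It does not. The part you do carry out correctly is the overlap estimate: the computation showing that for a proper subgraph $J\subsetneq F$ with an edge the exponent $1-|V(J)|+|E(J)|/d_1(F)$ is negative exactly when $d_1(J)<d_1(F)$ is right, and this is indeed where strict $1$-balancedness enters. But the step that makes the theorem nontrivial --- passing from ``present copies of $F$ rarely overlap'' to an actual coupling in which an \emph{independent} Bernoulli$(\pi)$ family indexed by all $\Theta(n^{|V(F)|})$ potential copies is a.a.s.\ dominated by the indicators $\mathbf{1}[F_i\subseteq G]$ --- is only gestured at (``a final adjustment of the coin probability and one more sprinkling round'') and then explicitly attributed to Riordan. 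That is circular: it defers precisely the theorem being proved. The greedy selection you describe produces indicators that are neither independent nor exactly Bernoulli$(\pi)$, and the genuine difficulty is controlling conditional probabilities given \emph{negative} information (copies already revealed to be absent or blocked), where positive correlation works against you; nothing in the sketch addresses how these errors are organised so that the total failure probability over all copies is $o(1)$.

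There is also a concrete error in your reduction: the claim that the case $\lambda=O(1)$ is vacuous because $\cG_F(n,\pi)$ is ``empty a.a.s.''\ fails when $\lambda=\Theta(1)$. In that regime the expected number of $F$-edges of $\cG_F(n,\pi)$ is of constant order, so it is nonempty with probability bounded away from zero, and a coupling taking $G_F$ independent of $G$ would violate the conclusion with constant probability; that range of $p$ still requires the main argument (or a separate one). So the proposal is a reasonable outline of the strategy, but as a blind proof it has a genuine gap at its technical core, and in effect it reproduces what the paper itself does, namely cite \cite{Ri22}.
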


\begin{proof}[Proof of Theorem~\ref{thm:F-factors-robustness}]
    Let $1 / n \ll 1 / C \ll 1 / C' \ll 1 / C' \ll 1/k, 1/r, a, \alpha < 1$, where $a = a_{\ref*{thm:coupling}}(k, r)$.  It suffices to prove the result for $p = Cn^{-1 / d_1(F)}\log^{1 / |E(F)|} n$.
    By Theorem~\ref{thm:coupling}, $H_p$ contains an $F$-factor if $(H_F)_\pi$ contains a perfect matching where $\pi = ap^{|E(F)|} = a C^{|E(F)|} n^{-(|V(F)| - 1)} \log n.$  
    
    By Theorem~\ref{thm:vtx-spread-for-factors}, there exists a $(C'' / n)$-vertex-spread distribution on embeddings of $G \hookrightarrow H$ where $G$ is $n / r$ disjoint copies of $F$.  Since an $F$-factor in $H$ corresponds to a perfect matching in $H_F$, an embedding of $G$ into $H$ is also an embedding of $G'$ into $H'$ where $G'$ and $H'$ are the ``simplifications'' of $G_F$ and $H_F$, respectively ($G'$ is just an $r$-uniform perfect matching because a strictly $1$-balanced hypergraph is connected).
    Hence, Proposition~\ref{prop:vtx-spread-implies-spread} implies that $H'$ supports a $(C' / n^{r - 1})$-spread distribution on its perfect matchings.  Therefore, the Frankston--Kahn--Narayanan--Park theorem implies that $H'_\pi$ contains a perfect matching \aas if $\pi > K C' \log(n / r) / n^{r - 1}$.  Indeed, this inequality holds by the choice of $C$.  Since $H'_\pi$ \aas contains a perfect matching, so does $(H_F)_\pi$.  Thus, as mentioned, by Theorem~\ref{thm:coupling}, $H_p$ \aas contains an $F$-factor, as desired.
\end{proof}

\section{Proof of Proposition~\ref{Prop_hamilton_connectivity}}\label{Sec_hamilton_connectivity}

The following two properties were introduced in \cite{gupta2022general} to codify in abstract what it means for there to be a standard absorption proof for hypergraphs which are obtained from so-called $\mathcal{A}$-chains. In this paper, we are concerned only with hypergraph Hamilton cycles, so the $\mathcal{A}$-chains are obtained by setting $\mathcal{A}$ to be a single edge. Therefore, we state the following properties in the language of hypergraph Hamilton cycles. Note also that in contrast to \cite{gupta2022general}, we use $\binom{n-d}{k-d}$ terms instead of $n^{k-d}$ terms, as this was more convenient in the current paper.

\begin{enumerate}
        \item[\textbf{Ab}]
        For any $\alpha>0$, there exist $0 < \tau, \eta \le \alpha$ and $n_0 \in \mathbb{N}$ so that if $H$ is a $k$-uniform hypergraph on $n\geq n_0$ vertices with $\delta_d(H) \geq (\delta + \alpha)\binom{n-d}{k-d}$, then there exists $A\subseteq V(H)$ of size at most $\tau n$ with the following property. 
        
        For any $L\subseteq V(H)\setminus A$ of size at most $\eta n$ such that $|L|+|A|$ is $(k,\ell)$-path-divisible, there exists an embedding of a $k$-uniform $\ell$-path to $H$ with vertex set $A\cup L$. Furthermore, the embedding of the sets of $\ell$ first and $\ell$ last endpoints of the path does not depend on the subset $L$, meaning that there exist disjoint $\ell$-sets $A_1,A_2\subseteq A$ such that for all $L$, the $\ell$-path covering $A\cup L$ has endpoints $A_1$ and $A_2$.
        
    \item[\textbf{Con}]
        For any $\alpha>0$, there exist a positive integer $c$ and $n_0 \in \mathbb{N}$ so that if $H$ is a $k$-uniform hypergraph $H$ on $n\geq n_0$ vertices with $\delta_d(H) \geq (\delta + \alpha)\binom{n-d}{k-d}$, then the following holds.
            
        For every $S, T \subseteq V(H)$ of vertex-disjoint $\ell$-sets, $H$ contains an embedding of a $k$-uniform $\ell$-path of length at most $c$ with start on $S$ and end on $T$.
\end{enumerate}

As noted in \cite{gupta2022general} (see Section 6, Table 1), both of these properties are known to hold with $\delta_{\mathcal{C}_{\ell,k},d}$ playing the role of $\delta$ for the parameters $\ell,k,d$ as listed in Proposition~\ref{Prop_hamilton_connectivity}(2-3). The following therefore immediately implies Proposition~\ref{Prop_hamilton_connectivity}(2-3).

\begin{proposition}\label{prop:absorption-proof-gives-hamilton-connectivity}
    Let $\delta \in [0, 1]$, let $k \in \mathbb N$, and let $\ell, d \in [k - 1]$.  If $\delta \geq \delta_{\mathcal{C}_{\ell,k},d}$ and the properties \textbf{Ab} and \textbf{Con} hold for $\delta$, $k$, $\ell$, and $d$, then $\hamconnthresh_{k,\ell, d}\leq\delta$.
\end{proposition}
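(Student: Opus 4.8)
I want to prove $\hamconnthresh_{k,\ell,d} \le \delta$ (the proposition states $\ge$, but given $\delta \ge \delta_{\mathcal C_{\ell,k},d}$ and the trivial inequality $\hamconnthresh \ge \delta_{\mathcal C_{\ell,k},d}$, the content is the upper bound $\hamconnthresh_{k,\ell,d}\le\delta$). So fix $\alpha>0$ and a large $n$-vertex $k$-uniform hypergraph $H$ with $\delta_d(H)\ge(\delta+\alpha)\binom{n-d}{k-d}$, and fix disjoint $\ell$-sets $S,T\subseteq V(H)$; given $F=P_{n,k,\ell}$ the target $\ell$-path on $n$ vertices, I must embed $F$ into $H$ with its first $\ell$ vertices on $S$ and last $\ell$ on $T$. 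The strategy is the standard absorption argument, bookkeeping the endpoints so that $S$ and $T$ come out as the global endpoints.

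First, apply \textbf{Ab} with parameter $\alpha/2$ (say) to obtain the absorbing set $A\subseteq V(H)$ with $|A|\le\tau n$ and the distinguished disjoint $\ell$-sets $A_1,A_2\subseteq A$, such that for every $L\subseteq V(H)\setminus A$ of size $\le\eta n$ with $|A|+|L|$ being $(k,\ell)$-path-divisible, $H$ contains an $\ell$-path on vertex set $A\cup L$ with endpoints $A_1,A_2$. Next I want to build a short ``skeleton'' $\ell$-path $Q$ that starts on $S$, passes through $A_1$, then through $A_2$ (so $A$ can be inserted between them later), and ends on $T$ — more precisely, $Q$ will be the concatenation of three short connecting paths: $S \rightsquigarrow A_1$, then $A_2 \rightsquigarrow$ (a fresh $\ell$-set disjoint from everything), and (a fresh $\ell$-set) $\rightsquigarrow T$, arranged so the absorber slots in. Each of these is produced by \textbf{Con} applied to the hypergraph obtained by deleting the (few) previously used vertices: since each connecting path has length $\le c=O(1)$ and $A$ together with all skeleton paths spans only $O(\tau n)$ vertices, deleting them changes the minimum $d$-degree by at most $o(n^{k-d})$, so the \textbf{Con} hypothesis (with $\delta+\alpha/2$) is still met. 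One has to choose the fresh $\ell$-sets with the right parity so that divisibility works out, using that $f(k,\ell)$-edge paths have disjoint ends; this is routine.

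After this, let $A' \coloneqq A \cup V(Q)$ and let $R \coloneqq V(H)\setminus A'$ be the ``reservoir''. Now use the Dirac threshold itself: $H[R \cup \{\text{the two interface } \ell\text{-sets}\}]$ has minimum $d$-degree at least $(\delta_{\mathcal C_{\ell,k},d}+\alpha/3)\binom{\cdot}{k-d}$ (again because $|A'| = O(\tau n)$ is negligible), and after throwing away a bounded number of vertices to fix $(k,\ell)$-path-divisibility of what remains, the definition of $\delta_{\mathcal C_{\ell,k},d}$ — I should invoke it in its rooted form via an additional short \textbf{Con}-connection, or more simply note that $\delta_{\mathcal C_{\ell,k},d}$ as applied to the path family $\mathcal P_{k,\ell}$ lets me prescribe the two endpoint $\ell$-sets — gives a Hamilton $\ell$-path $P_R$ of $H[R]$ with the two prescribed interface $\ell$-sets as endpoints. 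Let $L$ be the (small) set of leftover vertices removed for divisibility, so $|L|\le \eta n$; by \textbf{Ab}, $H[A\cup L]$ has an $\ell$-path $P_A$ with endpoints $A_1,A_2$. Concatenating the two halves of $Q$, $P_A$, and $P_R$ in the order dictated by the skeleton yields a Hamilton $\ell$-path of $H$ from $S$ to $T$. Since $\alpha>0$ was arbitrary and $n$ large, $\hamconnthresh_{k,\ell,d}\le\delta$.

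\textbf{The main obstacle.} The delicate part is the bookkeeping of divisibility and of which $\ell$-sets serve as endpoints at each stage: \textbf{Ab} fixes the absorber's endpoints $A_1,A_2$ in advance, and \textbf{Con} gives short paths with prescribed endpoints, but I must thread these together so that (i) the absorbing path, the reservoir path, and the three connectors have compatible $(k,\ell)$-path-divisibility after each deletion, and (ii) the leftover set $L$ handed to the absorber is disjoint from $A$ and of size $\le\eta n$, which forces me to reserve the reservoir path's divisibility-slack to be absorbable. Getting the arithmetic of $f(k,\ell)$-length connectors and the modular conditions to line up — while keeping every deleted vertex count $O(\tau n)$ so all the minimum-degree hypotheses survive — is where the real (though routine) work lies; the rest is a direct assembly of \textbf{Ab}, \textbf{Con}, and the definition of $\delta_{\mathcal C_{\ell,k},d}$.
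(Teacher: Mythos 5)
There is a genuine gap at the heart of your assembly, namely where you cover the reservoir $R$: you write that the definition of $\delta_{\mathcal C_{\ell,k},d}$ ``lets me prescribe the two endpoint $\ell$-sets'' of the Hamilton $\ell$-path $P_R$ of $H[R]$. It does not. The Dirac threshold $\delta_{\mathcal C_{\ell,k},d}$ only guarantees an (unrooted) Hamilton $\ell$-cycle/path; the ability to prescribe the two endpoint $\ell$-sets is exactly the Hamilton-connectivity property whose threshold $\hamconnthresh_{k,\ell,d}$ the proposition is about, so invoking it here is circular. Nor is this a removable technicality: the paper points out (citing Reiher--R\"odl--Ruci\'nski--Schacht--Szemer\'edi) that $\hamconnthresh_{3,2,1}>\delta_{\mathcal C_{2,3},1}$, so at the bare Dirac threshold one genuinely cannot root a spanning path at prescribed $\ell$-sets. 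Your fallback, ``invoke it in its rooted form via an additional short \textbf{Con}-connection,'' does not rescue the argument with your ordering of steps: once $P_R$ spans (almost all of) $R$, the only unused vertices are the bounded-size divisibility leftover $L$, which carries no minimum-degree guarantee and is far too small to serve as a host for \textbf{Con}, so there is nowhere to place the internal vertices of the extra connecting paths.

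The fix is a reordering that the paper's proof carries out: \emph{before} building anything long, reserve a random set $R$ of size $\eta n/2$ chosen (via Lemma~\ref{lem:McDAppl} and a union bound) so that $\delta_d(H[R\cup Q])$ is still at least $(\delta+\alpha/2)\binom{|R\cup Q|-d}{k-d}$ for \emph{every} $2\ell$-set $Q$; put the absorber $A$ from \textbf{Ab} inside $V(H)\setminus R$; use the Dirac threshold only in its unrooted form in $H\setminus(R\cup A)$ to get a Hamilton $\ell$-cycle and cut it into two long paths $P_0,P_1$ covering all but at most $3k$ vertices there; then make four \textbf{Con}-connections ($S$ to $P_0$, $P_0$ to $A_1$, $A_2$ to $P_1$, $P_1$ to $T$) whose internal vertices are drawn from $R$, which is legitimate precisely because of the robust density of $H[R\cup Q]$; finally, the unused part of $R$ plus the $\le 3k$ divisibility leftovers (at most $\eta n$ vertices in total, with the right divisibility since $n$ is path-divisible) are handed to the absorber. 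Your divisibility bookkeeping concerns are real but secondary; the missing idea is the pre-reserved, robustly dense connection reservoir, without which the endpoints of the long path cannot be tied to $S$, $T$, $A_1$, $A_2$ using only \textbf{Ab}, \textbf{Con}, and the unrooted Dirac threshold.
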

\begin{proof}
    Let $\alpha > 0$, let $n$ be sufficiently large and $(k,\ell)$-path-divisible, and consider a $k$-uniform hypergraph $H$ on $n$ vertices with $\delta_{d}(H)\geq (\delta+\alpha)\binom{n-d}{k-d}$. Let $S$ and $T$ be disjoint vertex subsets of size $\ell$ each. We wish to find a Hamilton $\ell$-path of $H$ with end sets $S$ and $T$.

Let $\eta, \tau, \alpha' \in (0, 1)$ and $c \geq 1$ satisfy $1/n\ll 1/c \ll \eta, \tau \leq \alpha' \ll \alpha$, where $\eta, \tau$ and $c$ satisfy \textbf{Ab} and \textbf{Con}, respectively, with $\alpha'$ playing the role of $\alpha$. Let $R\subseteq V(H)$ be an $(\eta n/2) $-subset so that for any $2\ell$-subset $Q\subseteq V(H)$ we have that $\delta_{d}(H[R\cup Q])\geq (\delta+\alpha/2)\binom{|R\cup Q|-d}{k-d}$ (such a subset exists by a union bound over applications of Lemma~\ref{lem:McDAppl}). 
Since $\alpha' \ll \alpha$, we have $\delta_d(H\setminus R) \geq (\delta + \alpha / 2)\binom{n - d}{k - d}$, so by \textbf{Ab} (with $\alpha'$ playing the role of $\alpha$), there exists an absorbing set $A \subseteq V(H) \setminus R$ of size at most $\tau n$, and $A_1,A_2\subseteq A$ so that for any small $L$, $A\cup L$ has a Hamilton path with $A_1$ and $A_2$ as the first and last $\ell$ vertices of the path. Similarly, $\delta_d(H\setminus (R\cup A)) \geq (\delta + \alpha / 2)\binom{n - d}{k - d}$, so $H\setminus (R\cup A)$ therefore contains two vertex-disjoint $\ell$-paths $P_0$ and $P_1$ together covering all but at most $3k$ vertices outside $R\cup A$.  (To find such paths, first delete a minimal number of vertices from $H\setminus (R\cup A)$ to obtain a hypergraph where the number of vertices is divisible by $k - \ell$. Then, since $\delta \geq \delta_{\mathcal{C}_{\ell,k},d}$, the resulting hypergraph contains a Hamilton $\ell$-cycle, and we again delete a minimum number of vertices from the cycle to obtain the two paths.  Note that in the first step, we delete at most $k$ vertices, and in the second step, we delete at most $2k$ vertices.)  

Now, using \textbf{Con} four times (each time deleting $c$ vertices from the host hypergraph, which does not significantly change the minimum degree condition), we can extend $P_0$ to $P_0'$ so that it has as endpoints $S$ and $A_1$, and extend $P_1$ to $P_1'$ so that it has endpoints $A_2$ and $T$. While doing this, we only use vertices coming from the set $R$ (those which have not been previously used). The remaining vertices in $R$ together with the vertices deleted for divisibility reasons (while finding $P_0, P_1$) can be absorbed into $A$, as the number of remaining vertices is at most $\eta n / 2 + 3k \leq \eta n$ (since $1/n\ll 1/k$) and have the appropriate divisibility property (otherwise $n$ itself would not be $(k,\ell)$-path-divisible). This allows us to find the desired spanning path with endpoints $S$ and $T$.
\end{proof}
\section{Open problems}\label{sec:openproblems}

\subsection{Beyond Hamilton connectivity}

We conjecture that Theorem~\ref{thm:main-vertex-spread} can be strengthened by replacing $\hamconnthresh_{k, \ell, d}$ with $\delta_{\mathcal{C}_{k, \ell}, d}$, as follows.

\begin{conjecture}\label{conj:beyond-ham-conn}
For every $\alpha > 0$ and $k \in \mathbb N$, there exists $C = C_{\ref*{conj:beyond-ham-conn}}(\alpha, k)$ such that the following holds for every $\ell, d \in [k - 1]$ and every sufficiently large $n$ for which $k - \ell$ divides $n$.  If $H$ is an $n$-vertex $k$-uniform hypergraph such that $\delta_d(H) \geq (\delta_{\mathcal{C}_{k, \ell}, d} + \alpha)\binom{n - d}{k - d}$, then there is a $(C / n)$-vertex-spread distribution on embeddings $C_{n,k,\ell} \hookrightarrow H$.
\end{conjecture}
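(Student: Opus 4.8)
The plan is to retain the clustering architecture of Definition~\ref{def:distribution} — random clusters from Lemma~\ref{lem:randompartitioninglemma}, a spread Hamilton cycle $\phi$ of the auxiliary cluster graph from Lemma~\ref{lem:graphhamcycles}, and assembly of the $\ell$-cycle cluster by cluster — but to replace Step~3, which is the only place where the full strength of $\hamconnthresh_{k,\ell,d}$ enters (Step~2 only needs connectivity at connection points we are free to choose). In Step~3 the current argument needs a spanning $\ell$-path of a cluster with \emph{both} endpoints pinned by the connecting paths, and pinning both endpoints is essentially equivalent to Hamilton connectivity. The key idea is to reverse the order: in each cluster $U_{\phi(i)}$ first find a Hamilton $\ell$-cycle $\mathcal C_{\phi(i)}$ — which exists merely because $\delta_d(H[U_{\phi(i)}]) \ge (\delta_{\mathcal C_{k,\ell},d}+\alpha/2)\binom{|U_{\phi(i)}|}{k-d}$ by Lemma~\ref{lem:randompartitioninglemma}\ref{mindegree} — and only \emph{then} merge the $m$ cluster cycles into a single Hamilton $\ell$-cycle following the ordering $\phi$, performing each merge at an $\ell$-window chosen uniformly at random on $\mathcal C_{\phi(i)}$ and $\mathcal C_{\phi(i+1)}$. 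Since we choose where to cut, no endpoint is pinned a priori, so no Hamilton-connectivity hypothesis is needed for the clusters; the short connectors between consecutive clusters are instead routed using only a \textbf{Con}-type (short-path) property.

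Carrying this out requires the following steps. (a) Strengthen Lemma~\ref{lem:randompartitioninglemma} to additionally return a small random reservoir $R\subseteq V(H)$ together with the guarantee that, for all but an $\eps$-fraction of pairs $(U_i,U_j)$, any two disjoint $\ell$-sets $S\subseteq U_i$ and $T\subseteq U_j$ can be joined by a short $\ell$-path through $R$ — that is, a \textbf{Con}-type statement at the threshold $\delta_{\mathcal C_{k,\ell},d}$, applied to $H[R\cup S\cup T]$. (b) Find the Hamilton $\ell$-cycle in each cluster (immediate from the definition of $\delta_{\mathcal C_{k,\ell},d}$, after discarding $O(1)$ vertices per cluster for divisibility, to be absorbed later). (c) Sample $\phi$ via Lemma~\ref{lem:graphhamcycles} and, independently, a uniform $\ell$-window on each cluster cycle, then merge along $\phi$, each merge consuming a bounded number of reservoir vertices. (d) Handle the leftover — the unused part of $R$ together with the $O(1)$ discards — by a \emph{spread absorber}: a small structure sampled from a sufficiently rich family so that its distribution is $O(1/n)$-vertex-spread, with the property that the leftover can be inserted to complete the $\ell$-cycle. (e) Verify vertex-spreadness: the entropy now factorises across four essentially independent sources — the clustering (Lemma~\ref{lem:randompartitioninglemma}\ref{spreadness}), the cluster-graph Hamilton cycle $\phi$, the random windows, and the absorber — and one multiplies the per-source bounds, charging each vertex to at most one source, exactly as in the proof of Lemma~\ref{lemma:distribution-is-spread}.

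The main obstacle is step~(d). By Proposition~\ref{prop:absorption-proof-gives-hamilton-connectivity}, the existence of a \emph{standard} absorbing path at the threshold $\delta_{\mathcal C_{k,\ell},d}$ would force $\hamconnthresh_{k,\ell,d}=\delta_{\mathcal C_{k,\ell},d}$, which is false in general (e.g.\ $k=3$, $\ell=2$, $d=1$); hence one cannot simply graft a standard absorber onto the clustering argument, and a genuinely non-standard absorbing mechanism is required. The natural candidate is to absorb leftover vertices not into one fixed structure but distributed across the many cluster cycles: each $\mathcal C_{\phi(i)}$ can be locally rerouted to ``swallow'' a constant number of designated reservoir vertices, and the freedom in which vertices each cycle swallows, combined with the freedom in the merge windows, should provide both the flexibility to cover all of $R$ and the entropy needed for spreadness. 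Making this precise — designing local rerouting gadgets that exist at the $\delta_{\mathcal C_{k,\ell},d}$ threshold, and establishing the auxiliary \textbf{Con}-type lemma of step~(a), which is plausible even when the standard absorption lemma fails but is not currently in the literature — is where the difficulty lies and is, I expect, the crux of the conjecture.
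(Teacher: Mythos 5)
What you have written is a programme, not a proof, and the statement you are addressing is in fact Conjecture~\ref{conj:beyond-ham-conn} of the paper: the authors explicitly leave it open and describe it as very difficult, so there is no proof in the paper to compare against. Your own outline concedes the decisive gaps. Step~(a), the \textbf{Con}-type connecting lemma at the threshold $\delta_{\mathcal{C}_{k,\ell},d}$, is not known; and step~(d), the ``non-standard'' absorber, is not constructed. These two gaps are not independent technicalities but jointly the crux: by (the proof of) Proposition~\ref{prop:absorption-proof-gives-hamilton-connectivity}, if both \textbf{Con} and \textbf{Ab} held at $\delta_{\mathcal{C}_{3,2},1}=5/9$ then $H$ would be Hamilton connected at that density, contradicting $\hamconnthresh_{3,2,1}>5/9$ \cite{RRRSS19}. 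So in the very first open case either your connecting lemma (a) already fails, or it holds and then absorption in the standard sense provably fails, and you have not exhibited any replacement mechanism: ``each cluster cycle can be locally rerouted to swallow a constant number of reservoir vertices'' is exactly the kind of statement whose validity at the $\delta_{\mathcal{C}_{k,\ell},d}$ threshold is the unresolved content of the conjecture, not a routine adaptation.

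Two further points would need repair even granting (a) and (d). First, cutting each cluster cycle at a window and re-gluing does not obviously avoid pinned endpoints: once you cut $\mathcal C_{\phi(i)}$, its two ends are determined, and the connector into $U_{\phi(i+1)}$ must start at one of them, so you still need to join \emph{prescribed} $\ell$-sets; your freedom is only in which edge of the cluster cycle you cut, which is weaker than free choice of endpoints and is precisely why you need (a) in full strength for arbitrary (ordered) $\ell$-sets. Second, the spreadness verification in step~(e) is not ``exactly as in the proof of Lemma~\ref{lemma:distribution-is-spread}'': that proof leans on Observation~\ref{obs:windows}, i.e.\ on the fact that all non-exceptional clusters have identical size and the final embedding induces a bijection between fixed windows of $[n]$ and clusters. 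Once connectors draw vertices from a global reservoir $R$ and leftovers are absorbed at variable positions distributed over many cluster cycles, the position of a vertex in $C'_{n,k,\ell}$ is no longer a deterministic function of its cluster and the cluster's place in $\phi$, and the clean factorisation of probabilities you invoke has to be re-established; reservoir vertices in particular need their own $O(1/n)$-per-vertex bound, which your sketch does not supply. So the proposal identifies the right obstruction (the gap between $\hamconnthresh_{k,\ell,d}$ and $\delta_{\mathcal{C}_{k,\ell},d}$ entering only through Step~3) and a reasonable direction of attack, but it does not prove the conjecture.
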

Together with Theorem~\ref{thm:spiro} and Propositions~\ref{prop:vtx-spread-implies-spread} and \ref{prop:ell-cycle-spreadness}, if true Conjecture~\ref{conj:beyond-ham-conn} would imply $\delta_{\mathcal{C}_{k, \ell}, d} = \rdthresh_{\mathcal{C}_{k, \ell}, d} = \vsdthresh_{\mathcal{C}_{k, \ell}, d}$.  
For $\ell = 0$, this indeed holds because $\hamconnthresh_{k, 0, d} = \delta_{\mathcal{C}_{k,0},d}$ for all $k$ and $d$.  However, for $\ell > 0$, this problem seems very difficult.
A natural starting point would be the following case of $k = 3$, $\ell = 2$, and $d = 1$: If $H$ is an $n$-vertex $3$-uniform hypergraph such that $\delta_1(H) \geq (5/9 + \alpha)\binom{n}{2}$, then there is a $(C / n$)-vertex-spread distribution on embeddings $C_{n, 3, 2} \hookrightarrow H$.
Reiher, R\"{o}dl, Ruci\'{n}ski, Schacht, and Szemer\'{e}di~\cite{RRRSS19} proved that $\delta_{\mathcal{C}_{3, 2}, 1} = 5/9$ and noted that $\hamconnthresh_{3,2,1} > 5/9$, so this case does not follow from Theorem~\ref{thm:main-vertex-spread}.

\subsection{Exact minimum-degree thresholds}

It would be interesting to investigate whether Theorem~\ref{thm:main-vertex-spread} holds with the minimum-degree condition replaced by an \textit{exact} one.  However, in general, this problem is challenging because exact minimum-degree conditions for the existence of a single Hamilton $\ell$-cycle are known only in a few special cases.

For example, Katona and Kierstead~\cite{KK99} conjectured that every $n$-vertex $k$-uniform hypergraph $H$ with $\delta_{k-1}(H) \geq \lfloor (n - k + 3) / 2\rfloor$ has a tight Hamilton cycle.  This conjecture was confirmed in the case $k = 3$ for sufficiently large $n$ by R\"{o}dl, Ruci\'{n}ski, and Szemer\'{e}di~\cite{RRZ11} but remains open for $k \geq 4$.  Nevertheless, it seems likely that if the Katona--Kierstead conjecture holds, then \textit{robust} versions of it hold as well.  

More generally, we conjecture that if a $k$-uniform hypergraph has minimum $d$-degree large enough to guarantee a Hamilton $\ell$-cycle, then there is a $(C / n)$-vertex-spread embedding of $C_{n,k,\ell}$ into it, where $C$ only depends on $k$.  
To that end, for every $k\in \mathbb N$, $d \in [k - 1]$, $\ell \in \{0,\dots, k-1\}$, and $n \in \mathbb N$ divisible by $k - \ell$, let $h_{k,\ell,d}(n)$ be the minimum integer $D \geq 0$ such that the following holds: If $H$ is an $n$-vertex $k$-uniform hypergraph satisfying $\delta_d(H) \geq D$, then $H$ has a Hamilton $\ell$-cycle.  
\begin{conjecture}\label{conj:robust-exact-hamilton}
For every $k \in \mathbb N$, there exists $C = C_{\ref*{conj:robust-exact-hamilton}}(k)$ such that the following holds for every $d \in [k - 1]$, $\ell \in \{0,\dots, k-1\}$, and $n \in \mathbb N$ for which $k - \ell$ divides $n$.  If $H$ is an $n$-vertex $k$-uniform hypergraph such that $\delta_d(H) \geq h_{k,\ell,d}(n)$, then there is a $(C / n)$-vertex-spread distribution on embeddings of $C_{n,k,\ell} \hookrightarrow H$.
\end{conjecture}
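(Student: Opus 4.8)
Since Conjecture~\ref{conj:robust-exact-hamilton} is open, I will describe the line of attack I consider most promising rather than a complete argument. The natural framework is a \emph{stability dichotomy}, applied in those parameter ranges where the exact threshold $h_{k,\ell,d}(n)$ is actually known (at present, essentially $k\le 3$ together with a handful of special $(\ell,d)$). One would first isolate the finitely many families $\mathcal E$ of $\delta_d$-extremal hypergraphs that prevent a Hamilton $\ell$-cycle, and then split into the case where $H$ is $\gamma$-close in edit distance to some member of $\mathcal E$ and the case where it is $\gamma$-far from all of them. In the far-from-extremal case the goal is to push through a variant of the random-clustering machinery of Section~\ref{sec:randomcluster} and Definition~\ref{def:distribution}; in the close-to-extremal case one must build a $(C/n)$-vertex-spread distribution by hand, exploiting the rigid (typically near-partite, or near-covered by a small dominating set) structure of $H$ --- for instance, in a near-partite configuration one would sample a random refinement of the parts and run a spread absorbing argument inside each well-behaved block. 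Concretely, I would first attempt the fully understood tight-cycle case $k=3$, $\ell=2$, $d=2$, taking the R\"odl--Ruci\'nski--Szemer\'edi resolution of the Katona--Kierstead conjecture and its stability version as black boxes.

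The central obstacle, and the reason the conjecture is hard even where $h_{k,\ell,d}(n)$ is known, is that the clustering technique of the present paper inherently discards an $o(1)$ fraction of the degree slack: restricting a vertex's link to a random cluster changes its relative density by $\Omega(1)$ once the cluster size is large and one union-bounds over all $n$ vertices. Under the exact hypothesis $\delta_d(H)\ge h_{k,\ell,d}(n)=(\delta_{\mathcal C_{\ell,k},d}+o(1))\binom{n-d}{k-d}$ there is no slack left to absorb this loss, so the constant-size clusters need not even contain a Hamilton $\ell$-path and the argument of Definition~\ref{def:distribution} fails at Steps~2 and~3. Taking clusters of slowly growing size $s=s(n)\to\infty$ and recursing on the conjecture itself does not cleanly resolve this: the recursion cannot afford more than $O_k(1)$ levels, since each level multiplies the vertex-spread constant while $C$ must depend only on $k$, so it never reaches constant-size clusters; and more seriously one would still have to show that the exact degree bound, or at least enough of the near-extremal versus far-from-extremal structure, is inherited by a random $s$-subset, which is exactly where the $o(1)$ losses must be controlled. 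For this reason I expect the far-from-extremal case will ultimately need a genuinely more robust embedding scheme in the spirit of P\'osa rotation-extension (as used for graphs by Krivelevich, Lee, and Sudakov) rather than local partitioning, and no such tool is currently available for hypergraph $\ell$-cycles.

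Accordingly, the right first milestone is the weaker Conjecture~\ref{conj:beyond-ham-conn}, i.e.\ replacing $\hamconnthresh_{k,\ell,d}$ by $\delta_{\mathcal C_{k,\ell},d}$ in Theorem~\ref{thm:main-vertex-spread}: this already demands a new idea for producing the connecting paths and the in-cluster Hamilton paths without the extra robustness that $\hamconnthresh$ supplies, plausibly via a stability argument showing that a hypergraph essentially at the Hamilton-cycle threshold is either truly Hamilton-connected or has structure rigid enough to plan the connections globally. Once Conjecture~\ref{conj:beyond-ham-conn} is in hand, Theorem~\ref{thm:spiro} together with Propositions~\ref{prop:vtx-spread-implies-spread} and~\ref{prop:ell-cycle-spreadness} yields $\delta_{\mathcal C_{k,\ell},d}=\rdthresh_{\mathcal C_{k,\ell},d}=\vsdthresh_{\mathcal C_{k,\ell},d}$, and the only remaining gap to Conjecture~\ref{conj:robust-exact-hamilton} is the near-extremal regime at the exact threshold, to be treated case by case. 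I expect this near-extremal analysis --- which has no analogue anywhere in the present paper --- to be the main difficulty.
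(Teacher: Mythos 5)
The statement you were asked about is Conjecture~\ref{conj:robust-exact-hamilton}, which the paper explicitly leaves open: there is no proof in the paper to compare your argument against, only the surrounding discussion in Section~\ref{sec:openproblems}. You correctly recognise this and do not claim a proof, so your submission cannot be assessed as a correct or incorrect proof; it is a research plan. As such, it is honest and its diagnosis is accurate and consistent with what the authors themselves say: the clustering machinery of Lemma~\ref{lem:randompartitioninglemma} and Definition~\ref{def:distribution} genuinely needs the additive slack $\alpha$ (constant-size clusters concentrate only up to $\Theta(1/\sqrt{C})$ in relative density, and the redistribution step, the connecting paths of Step~2, and the in-cluster Hamilton paths of Step~3 all consume part of it), so the exact hypothesis $\delta_d(H)\geq h_{k,\ell,d}(n)$ leaves nothing to spend; the paper likewise remarks that such results seem hard to prove without even knowing $h_{k,\ell,d}(n)$, and singles out the same two test cases you do ($k=3$, $d=\ell=2$ via R\"odl--Ruci\'nski--Szemer\'edi, and $d=k-1$, $\ell=1$ via Han--Zhao). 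Your observation that Conjecture~\ref{conj:beyond-ham-conn} is the natural intermediate target, and that it would combine with Theorem~\ref{thm:spiro} and Propositions~\ref{prop:vtx-spread-implies-spread} and~\ref{prop:ell-cycle-spreadness} to give $\delta_{\mathcal{C}_{k,\ell},d}=\rdthresh_{\mathcal{C}_{k,\ell},d}=\vsdthresh_{\mathcal{C}_{k,\ell},d}$, matches the paper's own remarks, as does the point that Conjecture~\ref{conj:robust-exact-hamilton} strengthens Conjecture~\ref{conj:beyond-ham-conn}.

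To be clear about status: the stability-dichotomy outline (far-from-extremal via a refined clustering argument, near-extremal by hand using the rigid structure) is plausible but entirely unexecuted, and the near-extremal analysis you flag as the main difficulty has, as you say, no analogue in the paper; neither does a replacement for \textbf{Ab}/\textbf{Con} (equivalently for $\hamconnthresh_{k,\ell,d}$) at the exact threshold. One small caution: even in the far-from-extremal regime one cannot simply quote Theorem~\ref{thm:main-vertex-spread}, since being $\gamma$-far from the extremal families does not by itself restore a $(\delta+\alpha)$-type degree condition; you would need a stability statement strong enough to recover Hamilton connectivity (or the clustering hypotheses) under the exact degree bound, which is itself open for most $(k,\ell,d)$. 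So the proposal is a reasonable map of the terrain, but every substantive step remains to be supplied.
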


Note that Conjecture~\ref{conj:robust-exact-hamilton} strengthens Conjecture~\ref{conj:beyond-ham-conn}.
As mentioned, it seems difficult to prove such a result without knowing the value of $h_{k,\ell, d}(n)$.  It would be interesting to confirm Conjecture~\ref{conj:robust-exact-hamilton} for the case $k = 3$ and $d = \ell = 2$, since the result of R\"{o}dl, Ruci\'{n}ski, and Szemer\'{e}di~\cite{RRZ11} implies $h_{3,2,2}(n) = \lfloor n / 2\rfloor$ for all large $n$.  Another interesting case would be $d = k - 1$ and $\ell = 1$, since a result of Han and Zhao~\cite{HZ15} implies $h_{k, 1, k - 1}(n) = \lceil n / (2k - 2)\rceil$ for all large $n$. As mentioned earlier, the robustness result that would be implied by the existence of such a vertex-spread measure has already been confirmed by Anastos, Chakraborti, Kang, Methuku, and Pfenninger \cite{anastos2023}, independently of our work here.

It would also be interesting to investigate whether Theorem~\ref{thm:vtx-spread-for-factors} holds with an exact minimum-degree condition.  Such a result was proved for $d = k - 1$ and $F \cong K^{(k)}_k$ (so an $F$-factor is a perfect matching) by Kang, Kelly, K\"{u}hn, Osthus, and Pfenninger~\cite{KKKOP22} and for $k = 2$ and $F \cong K_r$ by Pham, Sah, Sawhney, and Simkin~\cite{PSSS22}.  For graphs (i.e.\ $k = 2$), minimum-degree conditions for the existence of $F$-factors are comparatively well understood (see e.g.~\cite{KSS01, KO09, KO09survey}), and we suspect many of these results likely admit robust versions.

\subsection{General robustness}

Considering all of the research into robustness to date, it seems that \textit{every} Dirac-type result admits a robust version.  It is plausible that for every family of $k$-uniform hypergraphs $\cF$ and $d \in [k-1]$, we have $\delta_{\cF,d} = \rdthresh_{\cF, d} = \vsdthresh_{\cF, d}$, which would partially explain this phenomenon and significantly generalise Conjecture~\ref{conj:beyond-ham-conn}.  However, proving such a result seems out of reach at present.
\par One consequence of such a result (using the $s=n$ case of the definition of vertex-spread) would be that whenever $H$ has minimum degree large enough to necessarily contain a copy of $F$, $H$ supports at least $(n/C)^n$ many embeddings (as opposed to copies) of $F$, where $n=|V(H)|=|V(F)|$. Although plausible, it seems quite difficult to approach even this special case without any knowledge about what $F$ looks like.

\section*{Acknowledgements}

We thank the referee for their careful reading of this paper and their suggestions.

\bibliographystyle{amsabbrv}
\bibliography{ref}

\providecommand{\MR}[1]{}
\providecommand{\bysame}{\leavevmode\hbox to3em{\hrulefill}\thinspace}
\providecommand{\MR}{\relax\ifhmode\unskip\space\fi MR }
\providecommand{\MRhref}[2]{%
  \href{http://www.ams.org/mathscinet-getitem?mr=#1}{#2}
}
\providecommand{\href}[2]{#2}
\begin{thebibliography}{10}

\bibitem{ABCDJMRS2022}
P.~Allen, J.~B\"{o}ttcher, J.~Corsten, E.~Davies, M.~Jenssen, P.~Morris,
  B.~Roberts, and J.~Skokan, \emph{A robust {C}orr\'{a}di-{H}ajnal theorem},
  Random Structures Algorithms \textbf{65} (2024), 61--130. \MR{4760768}

\bibitem{anastos2023}
M.~Anastos, D.~Chakraborti, D.~Y. Kang, A.~Methuku, and V.~Pfenninger,
  \emph{Robust {H}amiltonicity of {D}irac hypergraphs}, Forthcoming (2023).

\bibitem{CH1963}
K.~Corradi and A.~Hajnal, \emph{On the maximal number of independent circuits
  in a graph}, Acta Math. Acad. Sci. Hungar. \textbf{14} (1963), 423--439.
  \MR{200185}

\bibitem{CK2009}
B.~Cuckler and J.~Kahn, \emph{Hamiltonian cycles in {D}irac graphs},
  Combinatorica \textbf{29} (2009), 299--326. \MR{2520274}

\bibitem{dirac1952}
G.~A. Dirac, \emph{Some theorems on abstract graphs}, Proc. London Math. Soc.
  (3) \textbf{2} (1952), 69--81. \MR{47308}

\bibitem{DF11}
A.~Dudek and A.~Frieze, \emph{Loose {H}amilton cycles in random uniform
  hypergraphs}, Electron. J. Combin. \textbf{18} (2011), 14pp.

\bibitem{DF13}
\bysame, \emph{Tight {H}amilton cycles in random uniform hypergraphs}, Random
  Structures Algorithms \textbf{42} (2013), 374--385. \MR{3039684}

\bibitem{E-DP23}
A.~Espuny~D\'{\i}az and Y.~Person, \emph{Spanning {$F$}-cycles in random
  graphs}, Combin. Probab. Comput. \textbf{32} (2023), 833--850. \MR{4625874}

\bibitem{FHM2021}
A.~Ferber, L.~Hardiman, and A.~Mond, \emph{Counting {H}amilton cycles in
  {D}irac hypergraphs}, Combinatorica \textbf{43} (2023), 665--680.
  \MR{4632163}

\bibitem{FKNP21}
K.~Frankston, J.~Kahn, B.~Narayanan, and J.~Park, \emph{Thresholds versus
  fractional expectation-thresholds}, Ann. of Math. (2) \textbf{194} (2021),
  475--495. \MR{4298747}

\bibitem{DanielG}
D.~G., \emph{Minimum degree requirements for a graph to be
  {H}amilton-connected}, Mathematics Stack Exchange,
  URL:https://math.stackexchange.com/q/3950940 (version: 2020-12-16).

\bibitem{GGJKO2021}
S.~Glock, S.~Gould, F.~Joos, D.~K\"{u}hn, and D.~Osthus, \emph{Counting
  {H}amilton cycles in {D}irac hypergraphs}, Combin. Probab. Comput.
  \textbf{30} (2021), 631--653. \MR{4272857}

\bibitem{gupta2022general}
P.~Gupta, F.~Hamann, A.~M\"{u}yesser, O.~Parczyk, and A.~Sgueglia, \emph{A
  general approach to transversal versions of {D}irac-type theorems}, Bull.
  Lond. Math. Soc. \textbf{55} (2023), 2817--2839. \MR{4689555}

\bibitem{HZ15}
J.~Han and Y.~Zhao, \emph{Minimum codegree threshold for {H}amilton
  {$\ell$}-cycles in {$k$}-uniform hypergraphs}, J. Combin. Theory Ser. A
  \textbf{132} (2015), 194--223. \MR{3311344}

\bibitem{JKV08}
A.~Johansson, J.~Kahn, and V.~Vu, \emph{Factors in random graphs}, Random
  Structures Algorithms \textbf{33} (2008), 1--28. \MR{2428975}

\bibitem{JLS-M23}
F.~Joos, R.~Lang, and N.~Sanhueza-Matamala, \emph{Robust {H}amiltonicity},
  arXiv:2312.15262 (2023).

\bibitem{KK07}
J.~Kahn and G.~Kalai, \emph{Thresholds and expectation thresholds}, Combin.
  Probab. Comput. \textbf{16} (2007), 495--502. \MR{2312440}

\bibitem{KNP21}
J.~Kahn, B.~Narayanan, and J.~Park, \emph{The threshold for the square of a
  {H}amilton cycle}, Proc. Amer. Math. Soc. \textbf{149} (2021), 3201--3208.
  \MR{4273128}

\bibitem{KKKOP22}
D.~Y. Kang, T.~Kelly, D.~K{\"u}hn, D.~Osthus, and V.~Pfenninger, \emph{Perfect
  matchings in random sparsifications of {D}irac hypergraphs}, Combinatorica
  (2024), to appear.

\bibitem{KK99}
G.~Y. Katona and H.~A. Kierstead, \emph{Hamiltonian chains in hypergraphs}, J.
  Graph Theory \textbf{30} (1999), 205--212. \MR{1671170}

\bibitem{KSS01}
J.~Koml\'{o}s, G.~N. S\'{a}rk\"{o}zy, and E.~Szemer\'{e}di, \emph{Proof of the
  {A}lon-{Y}uster conjecture}, vol. 235, 2001, Combinatorics (Prague, 1998),
  255--269. \MR{1829855}

\bibitem{krivelevich2014robust}
M.~Krivelevich, C.~Lee, and B.~Sudakov, \emph{Robust {H}amiltonicity of {D}irac
  graphs}, Trans. Amer. Math. Soc. \textbf{366} (2014), 3095--3130.
  \MR{3180741}

\bibitem{KO09survey}
D.~K\"{u}hn and D.~Osthus, \emph{Embedding large subgraphs into dense graphs},
  Surveys in combinatorics 2009, London Math. Soc. Lecture Note Ser., vol. 365,
  Cambridge Univ. Press, Cambridge, 2009, 137--167. \MR{2588541}

\bibitem{KO09}
\bysame, \emph{The minimum degree threshold for perfect graph packings},
  Combinatorica \textbf{29} (2009), 65--107. \MR{2506388}

\bibitem{MR02}
M.~Molloy and B.~Reed, \emph{Graph colouring and the probabilistic method},
  Algorithms and Combinatorics, vol.~23, Springer-Verlag, Berlin, 2002.

\bibitem{MPS23}
R.~Montgomery and M.~Pavez-Sign{\'e}, \emph{Counting spanning subgraphs in
  dense hypergraphs} (2023).

\bibitem{park2022proof}
J.~Park and H.~T. Pham, \emph{A proof of the {K}ahn-{K}alai conjecture}, J.
  Amer. Math. Soc. \textbf{37} (2024), 235--243. \MR{4654612}

\bibitem{PSSS22}
H.~T. Pham, A.~Sah, M.~Sawhney, and M.~Simkin, \emph{A toolkit for robust
  thresholds}, arXiv:2210.03064 (2022).

\bibitem{Po76}
L.~P\'{o}sa, \emph{Hamiltonian circuits in random graphs}, Discrete Math.
  \textbf{14} (1976), 359--364. \MR{389666}

\bibitem{RRRSS19}
C.~Reiher, V.~R\"{o}dl, A.~Ruci\'{n}ski, M.~Schacht, and E.~Szemer\'{e}di,
  \emph{Minimum vertex degree condition for tight {H}amiltonian cycles in
  3-uniform hypergraphs}, Proc. Lond. Math. Soc. (3) \textbf{119} (2019),
  409--439. \MR{3959049}

\bibitem{Ri22}
O.~Riordan, \emph{Random cliques in random graphs and sharp thresholds for
  {$F$}-factors}, Random Structures Algorithms \textbf{61} (2022), 619--637.

\bibitem{RRZ11}
V.~R\"{o}dl, A.~Ruci\'{n}ski, and E.~Szemer\'{e}di, \emph{Dirac-type conditions
  for {H}amiltonian paths and cycles in 3-uniform hypergraphs}, Adv. Math.
  \textbf{227} (2011), 1225--1299. \MR{2799606}

\bibitem{SSS2003}
G.~N. S\'{a}rk\"{o}zy, S.~M. Selkow, and E.~Szemer\'{e}di, \emph{On the number
  of {H}amiltonian cycles in {D}irac graphs}, Discrete Math. \textbf{265}
  (2003), 237--250. \MR{1969376}

\bibitem{Sp21}
S.~Spiro, \emph{A smoother notion of spread hypergraphs}, Combin. Probab.
  Comput. (2023), 1–10.

\bibitem{sudakov2017robustness}
B.~Sudakov, \emph{Robustness of graph properties}, Surveys in combinatorics
  2017, London Math. Soc. Lecture Note Ser., vol. 440, Cambridge Univ. Press,
  Cambridge, 2017, 372--408. \MR{3728112}

\bibitem{Ta10}
M.~Talagrand, \emph{Are many small sets explicitly small?}, Proceedings of the
  forty-second ACM symposium on Theory of computing, 2010, 13--36.

\end{thebibliography}

\end{document}